\theoremstyle{plain}
\newtheorem{theorem}[equation]{Theorem}
\newtheorem{proposition}[equation]{Proposition}
\newtheorem{lemma}[equation]{Lemma}
\newtheorem{corollary}[equation]{Corollary}
\theoremstyle{definition}
\newtheorem{definition}[equation]{Definition}
\theoremstyle{remark}
\theoremstyle{remark}
\newtheorem{remark}[equation]{Remark}
\numberwithin{equation}{section}
\newcommand{\one}{\mathbbm{1}}
\newcommand{\dbar}{\bar \partial}
\newcommand{\abs}[1]{\left\vert#1\right\vert}
\newcommand{\norm}[1]{\left\Vert#1\right\Vert}
\newcommand{\ol}{\overline}
\newcommand{\wt}{\widetilde}
\newcommand{\mug}{{\mu_{\gamma}}}
\newcommand{\mugj}{{\mu_{\gamma_j}}}
\newcommand{\ipr}[1]{\left\langle #1 \right\rangle}
\newcommand{\xdownarrow}[1]{%
  {\left\downarrow\vbox to #1{}\right.\kern-\nulldelimiterspace}
}
\newcommand{\ca}{{\mathcal A}}
\newcommand{\cs}{{\mathcal S}}
\newcommand{\cx}{{\mathbb C}}
\newcommand{\rl}{\mathbb R}
\newcommand{\C}{{\mathbb C}}
\newcommand{\D}{{\mathbb D}}
\newcommand{\h}{{\mathbb H}}
\newcommand{\N}{{\mathbb N}}
\newcommand{\Q}{{\mathbb Q}}
\newcommand{\R}{{\mathbb R}}
\newcommand{\T}{{\mathbb T}}
\newcommand{\Z}{{\mathbb Z}}
\newcommand{\Uu}{\mathscr{U}}
\DeclarePairedDelimiter\floor{\lfloor}{\rfloor}
\begin{document}
\title[Projections onto Bergman spaces]{Projections onto $L^p$-Bergman spaces of Reinhardt Domains}
\author{Debraj Chakrabarti \and Luke D. Edholm}
\begin{abstract}
For $1<p<\infty$, we emulate the Bergman projection on Reinhardt domains by using a Banach-space basis of $L^p$-Bergman space. 
The construction gives  an integral kernel generalizing the  ($L^2$) Bergman kernel. 
The operator defined by the kernel is shown to be an absolutely bounded projection on the $L^p$-Bergman space  on a class of domains where the $L^p$-boundedness of the Bergman projection fails for certain $p \neq 2$.
As an application, we identify the duals of these $L^p$-Bergman spaces with weighted Bergman spaces.
\end{abstract}

\thanks{The first author was supported in part by US National Science
Foundation grant number DMS-2153907, and by a gift from the Simons 
Foundation (number 706445).}
\thanks{The second author was supported in part by Austrian Science Fund 
(FWF): AI0455721}
\subjclass[2020]{32A36, 46B15, 32A70, 32A25}
\address{Department of Mathematics\\Central Michigan University\\Mt. Pleasant, MI 48859, USA}
\email{chakr2d@cmich.edu}
\address{Department of Mathematics\\Universit\"at Wien, Vienna, Austria}
\email{luke.david.edholm@univie.ac.at}
\maketitle 


\section{Introduction}
\subsection{The Bergman projection on \texorpdfstring{$L^p$}{lp}}
\label{sec-introintro} Given a domain $\Omega \subset \C^n$, the {\em Bergman projection} $\bm{B}^\Omega$ is the orthogonal projection from $L^2(\Omega)$ onto the \emph{Bergman space} $A^2(\Omega)=L^2(\Omega)\cap \mathcal{O}(\Omega)$, the  subspace of square-integrable holomorphic functions. 
The Bergman projection  can be represented by integration against the \emph{Bergman kernel} $B^\Omega$: 
\begin{equation}\label{eq-bergmanproj}
\bm{B}^\Omega f(z)= \int_\Omega B^\Omega(z,w) f(w)dV(w),\qquad f\in L^2(\Omega),
\end{equation}
where $dV$ is Lebesgue measure.  The Bergman kernel enjoys 
remarkable reproducing, invariance and extremal properties and is closely 
related to the $\dbar$-Neumann problem 
(see e.g.\! \cite{bergmanbook, follandkohn, krantzbergman}). 

Bergman spaces can be naturally defined on all complex manifolds, in contrast with  Hardy spaces,  whose construction is tied to distinguished measures on the boundary of a domain, e.g., the Haar measure on the unit circle in the case of the classical Hardy space $H^p(\D)$ of $L^p$ boundary values of holomorphic functions.

Inspired by  Hardy spaces, it is natural to consider the space of $p$-th power integrable holomorphic functions $A^p(\Omega)$ of a domain $\Omega\subset \cx^n$. 
These have been known as \emph{($L^p$-)  Bergman spaces} since the 1970s, though S.~Bergman only studied the square integrable setting. 
In view of M.~Riesz's classical result  on the $L^p$-boundedness of the Szeg\H{o} projection for $1<p<\infty$, it is also natural to ask whether the Bergman projection extends to a bounded linear projection from $L^p(\Omega)$ onto $A^p(\Omega)$ via the integral formula \eqref{eq-bergmanproj}.  
When $\Omega$ is a ball in $\cx^n$, this turns out to be the case (see \cite{ZahJud64,ForRud74}); the same remains true in many classes of smoothly bounded pseudoconvex domains (\cite{PhoSte77,NagRosSteWai89,McNSte94,McNeal94} etc.)
In these cases, the extended operator turns out to be even \emph{absolutely bounded}, in the sense that the associated ``absolute" operator $(\bm{B}^\Omega)^+$ is bounded on $L^p(\Omega)$, where
\[
(\bm{B}^\Omega)^+ f(z)= \int_\Omega \abs{B^\Omega(z,w)} f(w)dV(w), \qquad f\in L^p(\Omega).
\]

On the other hand, there are examples of domains for which the extended Bergman projection fails to  define  a bounded projection from $L^p(\Omega)$ onto $A^p(\Omega)$ for some (and sometimes for all) $p\not=2$; see \cite{barrett84, BekBon95, krantz1, krantz2, zeytuncu2013, EdhMcN16b} and the survey \cite{zeytuncu2020}. 
Recent studies of the Bergman projection in certain classes of Reinhardt domains (\cite{chakzeytuncu, Edh16, EdhMcN16, Chen17, ChEdMc19, EdhMcN20, HuoWick2020a, zhang1, zhang2, monguzzi, bcem} etc.) shed more light on this phenomenon, revealing that the $L^p$-behavior of the Bergman projection that one sees on, e.g., smooth bounded strongly pseudoconvex domains breaks down on bounded Reinhardt domains whose boundary passes through the center of rotational symmetry, a simple example being the Hartogs triangle $\{\abs{z_1}<\abs{z_2}<1\}\subset \cx^2$. 
On such a domain it is possible that there are indices $1<p_1<p_2<\infty$ such that the linear subspace $A^{p_2}(\Omega)$ is not dense in the Bergman space $A^{p_1}(\Omega)$. 
This phenomenon can never occur on smoothly bounded pseudoconvex domains (see \cite{catlin}), and may constitute a glimpse of an  $L^p$-function theory where the Banach geometry of $L^p$ replaces the  Hilbert space idea of orthogonality. 
In the Reinhardt domains studied in this paper, Laurent representations are used to clarify some of these phenomena. 
For example, the fact that $A^{p_2}(\Omega)$ is not necessarily dense in $A^{p_1}(\Omega)$ is a manifestation of the fact that there may be monomials whose $p_1$-th power is integrable but not the $p_2$-th power. 

\subsection{Projection operators associated to bases}\label{sec-intro-projop} 
Let $L$ be a separable Hilbert space, $A$ a closed subspace of $L$ and $\{e_j\}$ a complete orthogonal set in $A$.
The orthogonal projection $\bm{P}$ from $L$ to $A$ may be represented by the following series (convergent in the norm of $L$):
\begin{equation}\label{eq-orthogonal}
\bm{P}f = \sum_j \frac{\ipr{f,e_j} }{\norm{ e_j}^2}e_j, \qquad f\in L.
\end{equation}
Since $\bm{P}f$ is defined geometrically as the point in $A$ nearest to $f$, this representation is independent of the choice of complete orthogonal set $\{e_j\}$.
When $L=L^2(\Omega),\, A=A^2(\Omega)$, \eqref{eq-orthogonal} coincides with the Bergman projection formula given by \eqref{eq-bergmanproj}.

In a general Banach space, the analog of a complete orthogonal set is a \emph{Schauder basis}:
a sequence $\{e_j\}_{j=1}^\infty$ in a complex Banach space $A$ is a Schauder basis if for each $f \in A$, there is a unique sequence $\{c_j\}_{j=1}^\infty$ of complex numbers such that $f = \sum_{j=1}^\infty c_je_j,$ where the series converges in the norm-topology of $A$ (see \cite{lt}). 
In this case, there exist bounded linear functionals $a_j:A\to \cx$ such that $c_j=a_j(f)$, generalizing the Fourier coefficients $a_j(f)=\frac{\ipr{f,e_j}}{\norm{e_j}^2}$ seen in the Hilbert setting.  

When $L$ is a Banach space, $A$ a closed subspace, and $\{e_j\}_{j=1}^\infty$ a Schauder basis of $A$, one might attempt to define a projection operator from $L$ onto $A$ by emulating \eqref{eq-orthogonal}:
\begin{equation}\label{eq-basisproj-def1} 
\bm{P}f = \sum_j \,\wt{a}_j(f) e_j, \qquad f\in L,
\end{equation}
where $\wt{a}_j:L\to \cx$ is a Hahn-Banach (norm-preserving) extension of $a_j:A \to \C$.
When it exists, an operator of type \eqref{eq-basisproj-def1} will be called a \emph{basis projection} determined by the Schauder basis; this notion encapsulates the orthogonal projection \eqref{eq-orthogonal} when $L$ is Hilbert. 
A less obvious example of a basis projection is seen by considering the unit circle $\T$ with the Haar measure and $1<p<\infty$. 
The classical Szeg\H{o} projection from $L^p(\T)$ onto the Hardy space $H^p(\D)$ is a basis projection; see Proposition~\ref{prop-szego}.
In contrast, we show in Proposition~\ref{cor-bergmannotbasis} that for $p \neq 2$, the attempt to extend the Bergman projection to $L^p$ by continuity -- even if successful -- is \emph{never} a basis projection. 
This is an underlying reason for the deficiencies of the Bergman projection in $L^p$ spaces, and our goal in this paper is to construct basis projections from $L^p(\Omega)$ to $A^p(\Omega)$. 

\subsection{The Monomial Basis Projection} 
Formula \eqref{eq-basisproj-def1} is purely formal, as there is no guarantee that a basis projection onto the subspace determined by a given basis exists.
Several technical points must first be addressed:

\begin{enumerate}[wide]
\item \label{Technical-1}
A basis projection depends on both the range subspace $A$ and on the choice of Schauder basis -- or the slightly more general notion of a {\em Banach-space basis} (see Section \ref{sec-schauderdef}) -- determining the projection. 
A Banach space need not have such a basis, but in the Bergman space $A^p(\Omega)$ of a Reinhardt domain  $\Omega\subset \cx^n$, there is a distinguished basis tied to geometry and function theory. 
This is the collection of Laurent monomials in $A^p(\Omega)$, functions $z\mapsto z_1^{\alpha_1}z_2^{\alpha_2} \dots z_n^{\alpha_n}$ where  $\alpha_j\in \Z$, $1\leq j \leq n$. 
The fact that these monomials under an appropriate partial ordering give a Banach-space basis of $A^p(\Omega)$ was first proved in \cite{ChEdMc19}, and is recalled in a slightly more general form in Theorem~\ref{T:monomials-form-a-Schauder-basis} below.
The projection operator from $L^p(\Omega)$ to $A^p(\Omega)$ defined in terms of this monomial basis by formula \eqref{eq-basisproj-def1} is the main topic of this paper: the \emph{Monomial Basis Projection (MBP)}. 

\item A Hahn-Banach extension of a linear functional in general is far from unique, but in our application, where we extend coefficient functionals defined on $A^p(\Omega)$ to $L^p(\Omega)$,  {we do have uniqueness}; see  Propositions~\ref{prop-taylor} and \ref{prop-lpstrictconvex} below.
This means the MBP can be unambiguously defined by \eqref{eq-basisproj-def1}, since the summation procedure is specified by the partial ordering of our Banach-space basis mentioned in item \eqref{Technical-1}.

\item None of the above guarantees that the formal series \eqref{eq-basisproj-def1} converges for $f \in L$. Showing that \eqref{eq-basisproj-def1} defines a bounded operator on $L$ requires direct estimation to show that the partial summation operators are uniformly bounded in the operator norm of $L$. 
In our application to Bergman spaces $A^p(\Omega)$, the problem is simplified because of the availability of an integral kernel representation of the MBP.
\end{enumerate}

\subsection{Notation, definitions and conventions}\label{sec-notation} 
\begin{enumerate}[wide]
\item Unless otherwise indicated, $\Omega$ will denote a bounded Reinhardt domain in $\cx^n$ with center of symmetry at 0, i.e., whenever $z\in \Omega$, for every tuple $(\theta_1,\dots, \theta_n)\in \rl^n$, we have $( e^{i\theta_1}z_1,\dots, e^{i\theta_n}z_n)\in \Omega$. Let $\abs{\Omega}\subset \rl^n$ denote its \emph{Reinhardt Shadow}, i.e., 
\begin{equation*}
\abs{\Omega}= \{ (\abs{z_1},\dots, \abs{z_n})\in \rl^n: z\in \Omega\}.
\end{equation*}
    
\item The index $p$ satisfies $1<p<\infty$, and denote by $q$ the index Hölder-conjugate to $p$, i.e., $\frac{1}{p}+\frac{1}{q}=1.$

\item \label{item-admissible} For a domain $U\subset \cx^n$ and a measurable function $\lambda:U \to [0,\infty]$ which is positive a.e. (the \emph{weight}), we set for a measurable function $f$,
\begin{equation}\label{eq-plambda}
\norm{f}_{L^p(U,\lambda)}^p= \norm{f}_{p,\lambda}^p = \int_{U} \abs{f}^p \lambda \,dV,
\end{equation}
where $dV$ denotes Lebesgue measure, and functions equal a.e. are identified.
We let $L^p(U,\lambda)$ be the space of functions $f$ for which $\norm{f}_{p,\lambda}<\infty$, which is a Banach space.

Let $A^p(U,\lambda)$ be the subspace of $L^p(U,\lambda)$ consisting of holomorphic functions:
\[
A^p(U,\lambda)=L^p(U,\lambda)\cap \mathcal{O}(U). 
\]
We will only consider weights $\lambda:U \to [0,\infty]$ which are \emph{admissible} in the sense that \emph{Bergman's inequality} holds in $A^p(U, \lambda)$, i.e., for each compact set $K\subset U$, there is a constant $C_K>0$ such that for each $f\in A^p(U, \lambda)$ we have 
\begin{equation}\label{eq-bergmanineq}
\sup_K \abs{f}\leq C_K \norm{f}_{L^p(U, \lambda)}.
\end{equation}
It is easy to see  that if $\lambda$ is a positive continuous function on $U$ then it is an admissible weight on $U$.
We treat a class of  more general admissible weights in Section \ref{SS:GenAdmWeights}. 

If $\lambda$ is an admissible weight on $U$, a standard argument shows that $A^p(U, \lambda)$ is a closed subspace of $L^p(U,\lambda)$, and therefore a Banach space. 
It is called a \emph{weighted Bergman space}.

\item We are interested in Reinhardt domains $\Omega$ and phenomena which are invariant under rotational symmetry. 
Therefore, we consider only weights $\lambda$ on $\Omega$ which are both admissible and \emph{multi-radial}, in the sense that there is a function $\ell$ on the Reinhardt shadow $\abs{\Omega}$ such that $\lambda(z_1,\dots, z_n)=\ell(\abs{z_1},\dots, \abs{z_n})$.

\item For $\alpha\in \Z^n$, we denote by $e_\alpha$ the Laurent monomial of exponent $\alpha$:
\begin{equation}\label{E:def-of-monomial-e_alpha}
e_\alpha(z)=z_1^{\alpha_1}\dots z_n^{\alpha_n}.
\end{equation}

\item We define the set of {\em $p$-allowable indices} to be the collection
\begin{equation}\label{E:def-of-p-allowable-indicies}
\mathcal{S}_p(\Omega,\lambda) = 
\left\{\alpha\in \Z^n: e_\alpha\in A^p(\Omega, \lambda)\right\}.
\end{equation}
If $\lambda \equiv 1$, we abbreviate $\cs_p(\Omega,1)$ by  $\cs_p(\Omega)$. 

\item The map $\chi_p:\cx^n\to \cx^n$ defined by
\begin{equation}\label{eq-chip}
\chi_p(\zeta) = \left(\zeta_1\abs{\zeta_1}^{p-2},\cdots,{\zeta}_n\abs{\zeta_n}^{p-2}\right)
\end{equation}
will be referred to as the \emph{twisting map}. 
It appears in the definition of the Monomial Basis Kernel in \eqref{E:def-of-monomial-basis-kernel}, and arises also in the duality pairing \eqref{eq-new-pairing}.   
Given a function $f$ we denote by $\chi_p^*f$ its pullback under $\chi_p$:
\begin{equation}\label{eq-chipstar}
    \chi_p^* f = f \circ \chi_p.
\end{equation}
\end{enumerate}

\subsection{The Monomial Basis Kernel}\label{SS:MBK}   
When it exists, the MBP of $A^p(\Omega,\lambda)$ is (by construction) a bounded surjective projection, which we write $\bm{P}^\Omega_{p,\lambda}: L^p(\Omega, \lambda)\to A^p(\Omega, \lambda)$.
To obtain an integral formula analogous to \eqref{eq-bergmanproj}, we define the \emph{Monomial Basis Kernel} of $A^p(\Omega,\lambda)$ (abbreviated \emph{MBK}), as the formal series on $\Omega \times \Omega$ given by
\begin{equation}\label{E:def-of-monomial-basis-kernel}
K_{p,\lambda}^\Omega(z,w) =\sum_{\alpha\in \mathcal{S}_p(\Omega, \lambda) }\frac{e_\alpha(z) \ol{\chi_p^*e_\alpha(w)}} {\norm{e_\alpha}_{p,\lambda}^p}. 
\end{equation}

When $p=2$, the MBK coincides with the Bergman kernel of $A^2(\Omega,\lambda)$, in which case the above series is known to converge locally normally on $\Omega \times \Omega$. 
For a general $1<p<\infty$, we show in Theorem~\ref{thm-MBK1} that when $\Omega$ is pseudoconvex, the series \eqref{E:def-of-monomial-basis-kernel} also converges locally normally on $\Omega \times \Omega$.
In Theorem~\ref{thm-mbpmbk} we prove that the MBP admits the representation
\begin{equation}\label{eq-MBPintegral}
\bm{P}_{p,\lambda}^\Omega(f)(z) = \int_\Omega  K_{p,\lambda}^\Omega(z,w) f(w) \lambda(w) dV(w), \qquad f \in L^p(\Omega,\lambda).
\end{equation}

\subsection{Improved \texorpdfstring{$L^p$}{lp}-mapping behavior}\label{SS:Intro-monomial-polyhedra}
The main theme of this paper is that the Monomial Basis Projection can have better mapping properties in $L^p$ spaces than the Bergman projection.
In Section~\ref{S:MonomialPolyhedra} we illustrate this on nonsmooth pseudoconvex Reinhardt domains called \emph{monomial polyhedra} (see \cite{NagPraDuke09,bcem}).
A bounded domain $\Uu\subset\cx^n$ is a monomial polyhedron in our sense, if there are exactly $n$ monomials $e_{\alpha^1}, \dots, e_{\alpha^n}$ such that
\[
\Uu = \left\{z\in \cx^n:\abs{e_{\alpha^1}(z)}<1, \dots, \abs{e_{\alpha^n}(z)}<1\right\}.
\]
We recall the $L^p$-mapping behavior of the Bergman projection on $\Uu$:
\begin{proposition}[\cite{bcem}]\label{P:BergmanLpRange-MonoPoly}
There is a positive integer $\kappa(\Uu)$ such that the Bergman projection  on $\Uu$ is bounded in the $L^p$-norm if and only if
\begin{equation}\label{E:BergmanLpRange-MonoPoly}
\frac{2\kappa(\Uu)}{\kappa(\Uu)+1}  < p < \frac{2\kappa(\Uu)}{\kappa(\Uu)-1}.
\end{equation}
\end{proposition}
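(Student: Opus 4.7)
The strategy is to reduce the $L^p$-mapping question on $\Uu$ to a weighted $L^p$-question on the polydisc $\D^n$ via the proper holomorphic monomial map $\Phi\colon\Uu\to\D^n$ defined by $\Phi(z)=(e_{\alpha^1}(z),\ldots,e_{\alpha^n}(z))$. The integer $\kappa=\kappa(\Uu)$ is most naturally identified with the mapping degree of $\Phi$—equivalently, the absolute value of the determinant of the integer matrix whose rows are $\alpha^1,\ldots,\alpha^n$—which governs both the number of branches of $\Phi^{-1}$ and the vanishing order of $\det\Phi'$ along the singular part of $\partial\Uu$.

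The first step is to obtain a workable formula for $B^\Uu(z,w)$. The Reinhardt symmetry gives
\[
B^\Uu(z,w)=\sum_{\alpha\in\cs_2(\Uu)}\frac{e_\alpha(z)\,\overline{e_\alpha(w)}}{\norm{e_\alpha}_2^2},
\]
and $\cs_2(\Uu)$ is a translate of the lattice cone dual to the vectors $\alpha^j$. Standard beta-integral computations determine $\norm{e_\alpha}_2^2$ explicitly and let the sum be evaluated in closed rational form; alternatively, a Bell-type pushforward identity writes $B^\Uu$ as a finite sum indexed by the $\kappa$ deck transformations of $\Phi$, each term carrying Jacobian factors $\det\Phi'(z)\,\overline{\det\Phi'(w)}$ and a pullback of the polydisc Bergman kernel.

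For the sufficiency direction I would apply Schur's test to $\abs{B^\Uu(z,w)}$ with a multi-radial test function $h(z)=\prod_{j=1}^n\abs{z_j}^{s_j}$ whose exponents depend on $p$. Transporting by $\Phi$ reduces the Schur integrals to absolutely convergent moment integrals against the polydisc Bergman kernel, and computing the thresholds for these moments to remain finite yields exactly the inequalities $p>2\kappa/(\kappa+1)$ and $p<2\kappa/(\kappa-1)$; symmetry of this range under $p\leftrightarrow q$ is automatic from the self-adjointness of the Bergman projection.

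For the necessity direction I would construct explicit test functions supported near the singular part of $\partial\Uu$, where $\Uu$ meets the coordinate hyperplanes. The natural candidates are truncated monomials $e_\beta\cdot\one_E$ with $\beta$ chosen on the boundary of $\cs_p(\Uu)$: their Bergman projection inherits the boundary singularity of the kernel and fails to lie in $L^p(\Uu)$ just past the endpoints. The main obstacle is to exhibit matching extremal monomials at both endpoints simultaneously, translating the combinatorial condition that $\beta$ lies on the boundary of the $p$-allowable lattice cone into the clean analytic inequalities \eqref{E:BergmanLpRange-MonoPoly}—this interplay between lattice-cone geometry and the boundary behavior of $\det\Phi'$ is what ultimately forces the single integer $\kappa$ to control both sides of the range.
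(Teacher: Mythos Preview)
The paper does not prove this proposition; it is quoted from \cite{bcem} as background, so there is no proof here to compare against. That said, your sketch contains a concrete error worth flagging.

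Your identification of $\kappa(\Uu)$ with the absolute value of the determinant of the matrix $B$ whose rows are $\alpha^1,\dots,\alpha^n$ is incorrect. For the Hartogs triangle $\h=\h_{1/1}$ one has $B=\begin{pmatrix}1&-1\\0&1\end{pmatrix}$ with $\abs{\det B}=1$, yet the paper records $\kappa(\h_{m/n})=m+n$, giving $\kappa(\h)=2$ (consistent with the known range $\frac{4}{3}<p<4$). So $\kappa$ is not the sheet number of the monomial map $z\mapsto(e_{\alpha^1}(z),\dots,e_{\alpha^n}(z))$; in \cite{bcem} it is a different combinatorial invariant tied to how the lattice of $p$-allowable monomials jumps as $p$ crosses thresholds (cf.\ Section~\ref{SS:Lp-reg-Berg-proj} here). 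Any argument that literally equates $\kappa$ with that determinant will produce the wrong interval.

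Relatedly, the covering used in \cite{bcem} and in Sections~\ref{sec-monomialtransformation}--\ref{S:MonomialPolyhedra} of this paper runs in the opposite direction from yours: one takes $A=\adj B$ and obtains a proper map $\Phi_A:\Omega\to\Uu$ from a product of discs and punctured discs \emph{onto} $\Uu$, not a map from $\Uu$ to the polydisc. Your map $\Phi:\Uu\to\D^n$ need not even be a branched cover (for $\h$ it is a biholomorphism onto $\D^*\times\D$), so the ``$\kappa$ deck transformations'' you invoke do not materialize from that construction. The overall Schur-test-plus-test-function strategy is the right shape, but the bookkeeping of which covering to use and what $\kappa$ actually measures needs to be redone.
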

Examples of monomial polyhedra in $\C^2$ are the (rational) generalized Hartogs triangles studied in \cite{EdhMcN16,EdhMcN16b}. 
Define $\h_\gamma=\{|z_1|^\gamma<|z_2|<1 \}$, $\gamma > 0$. 
If $\gamma = \frac{m}{n}$ is rational, $\gcd(m,n)=1$, this domain is a monomial polyhedron with $\alpha^1 = (m,-n), \alpha^2 = (0,1)$. 
In this case it can be shown that $\kappa(\h_{m/n}) = m+n$, yielding the interval $p \in \big(\frac{2m+2n}{m+n+1},\frac{2m+2n}{m+n-1}\big)$ from \eqref{E:BergmanLpRange-MonoPoly} on which the Bergman projection is $L^p$-bounded.
We also note the case of $\h_\gamma$, $\gamma$ irrational -- which is {\em not a monomial polyhedron} by our definition.
On these domains, it is shown in \cite{EdhMcN16b} that the Bergman projection is $L^p$-bounded if and only if $p = 2$.

This limited $L^p$-regularity is one of several deficiencies that can arise when the Bergman projection acts on $L^p$ spaces of nonsmooth domains; other possible defects such as a lack of surjectivity onto $A^p$ are discussed in Section \ref{S:BergmanComparison}.
The Monomial Basis Projection avoids these defects and is shown to have far more favorable mapping behavior.
Define for $1<p<\infty$ the corresponding ``absolute" operator of $A^p(\Uu)$ by
\begin{equation}\label{E:unweightedAMBO}
(\bm{P}^{\Uu}_{p,1})^+(f)(z) = \int_{\Uu}  \abs{K_{p,1}^\Uu(z,w)} f(w) \,dV(w).
\end{equation}

\begin{theorem}\label{T:MBP-AbsBoundedness}
Let $1<p<\infty$ and let $\Uu\subset \cx^n$ be a monomial polyhedron. Then the operator $(\bm{P}^\Uu_{p,1})^+$ is bounded from $L^p(\Uu)$ to itself.
\end{theorem}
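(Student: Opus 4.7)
My plan is to apply Schur's test to the positive kernel $\abs{K^\Uu_{p,1}(z,w)}$, using a multi-radial power test function $h(z) = \prod_{j=1}^n \abs{z_j}^{t_j}$ for suitable real exponents $t = (t_1,\dots,t_n)$. The starting point is to majorize, via the triangle inequality termwise in \eqref{E:def-of-monomial-basis-kernel}, by
\[
\abs{K^\Uu_{p,1}(z,w)} \leq \sum_{\alpha\in\cs_p(\Uu)} \frac{\abs{z}^\alpha \abs{w}^{(p-1)\alpha}}{\norm{e_\alpha}_{p,1}^p},
\]
where $\abs{z}^\alpha := \prod_j \abs{z_j}^{\alpha_j}$. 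The absence of cancellation in this positive majorant is the decisive advantage of the MBK over the Bergman kernel, and reduces boundedness of $(\bm{P}^\Uu_{p,1})^+$ to an integral estimate for the right-hand side.

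Two explicit computations on the monomial polyhedron then feed into Schur's test. First, because $\Uu$ is cut out by $n$ monomial inequalities $\abs{e_{\alpha^j}(z)}<1$, a linear change of coordinates in $\R^n$ dual to the defining matrix --- equivalently, pullback under the monomial proper map $\Uu \to \D^n$ determined by the $e_{\alpha^j}$'s --- identifies $\cs_p(\Uu)$ with $\Z^n$ intersected with an explicit translated simplicial cone. Second, in these coordinates, integrals of the form $\int_\Uu \abs{w}^\beta\,dV(w)$ over the allowed range of $\beta$, and in particular $\norm{e_\alpha}_{p,1}^p$ and $\int_\Uu \abs{w}^{(p-1)\alpha + qt}\,dV(w)$, evaluate to explicit products of reciprocals of linear functionals of the exponent vector. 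Substituting into $\int_\Uu \abs{K^\Uu_{p,1}(z,w)} h(w)^q\,dV(w)$, the resulting series becomes $\sum_\alpha c_\alpha(t)\abs{z}^\alpha$; for $t$ in an explicit open region, the $\alpha$-dependent ratios telescope so that the series collapses to a constant multiple of $h(z)^q = \abs{z}^{qt}$. The dual Schur inequality is handled in the same manner.

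The main obstacle is to ensure that a single $t \in \R^n$ can be chosen making both Schur integrals converge, and to do so for every $1<p<\infty$. This reduces to the nonemptiness of the intersection of two open convex polytopes in $\R^n$ whose defining linear inequalities depend on $p$ and on the defining monomials of $\Uu$. The key algebraic observation is that the MBK is designed precisely so that the twisting-map exponent $(p-1)$ in the $\abs{w}$-factor compensates the Hölder conjugate exponent $q$ arising from Schur's test; after a suitable $p$-dependent rescaling of $t$, the two polytopes become defined by inequalities in the data of $\Uu$ alone, and can be simultaneously solved. It is this rigid matching --- absent for the Bergman kernel --- that yields boundedness for the full range $1<p<\infty$, bypassing the restriction of Proposition \ref{P:BergmanLpRange-MonoPoly}.
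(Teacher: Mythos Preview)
There is a genuine gap at the heart of the argument. After your termwise majorization and integration in $w$, the Schur integral becomes a series $\sum_{\alpha\in\cs_p(\Uu)} c_\alpha(t)\,\abs{z}^\alpha$ in which every coefficient $c_\alpha(t)$ is strictly positive. Such a series can neither ``collapse to'' nor be bounded by a constant multiple of a single monomial $\abs{z}^{qt}$. Already on the disc one computes $c_\alpha(t)=(p\alpha+2)\big/((p-1)\alpha+qt+2)\to q$ as $\alpha\to\infty$, so the series is comparable to $\sum_\alpha \abs{z}^\alpha\sim (1-\abs{z})^{-1}$ near the boundary, while $\abs{z}^{qt}$ remains bounded there for any real $t$. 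The polytope conditions you describe control only the convergence of the individual $w$-integrals, not the subsequent summation in $\alpha$; no choice of $t$ rescues the estimate. The underlying reason is that the termwise triangle inequality replaces the oscillatory denominator $\abs{1-z\bar w\abs{w}^{p-2}}^{2}$ by $(1-\abs{z}\abs{w}^{p-1})^{2}$, and Schur's test with pure power weights is known to fail for kernels of the latter type.

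The paper's proof avoids this by never taking the termwise bound. Instead it pulls $\Uu$ back to a product $\Omega=U_1\times\cdots\times U_n$ of (punctured) discs via the monomial map $\Phi_A$ (Theorem~\ref{T:Abs-boundedness-equivalence}), and decomposes the resulting $\Gamma$-invariant subkernel into a \emph{finite} sum of tensor products of one-dimensional arithmetic-progression subkernels (Proposition~\ref{P:Gamma-inv-MBK-decomp}). Each one-dimensional factor is summed in closed form (Proposition~\ref{P:arith-prog-kernel}) so that the oscillatory denominator $\abs{1-t^b}^{2}$ is retained, and Schur's test is then applied factor by factor with auxiliary functions that carry a boundary weight $(1-\abs{z}^{2b})^{-1/(pq)}$ in addition to a power of $\abs{z}$ (Proposition~\ref{P:abs-boundedness-arith-prog-operators}). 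Both ingredients---the closed-form summation preserving oscillation, and the boundary factor in the test function---are essential, and neither appears in your scheme.
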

After setting the stage in Sections \ref{sec-onedim} and \ref{sec-monomialtransformation}, the proof of Theorem~\ref{T:MBP-AbsBoundedness} is finally carried out in Section \ref{S:MonomialPolyhedra}.
An application of this result is given in Section \ref{S:Duality}, where we represent the dual space $A^p(\Uu)'$ as a {\em weighted} Bergman space on $\Uu$; see Theorem~\ref{T:MBP-duality-pairing}.
\begin{corollary}\label{C:MBP-boundedness/surjectivity}
The Monomial Basis Projection is a bounded surjective projection operator $\bm{P}^\Uu_{p,1}: L^p(\Uu) \to A^p(\Uu)$.
\end{corollary}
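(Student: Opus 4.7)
The plan is to derive the corollary directly from Theorem~\ref{T:MBP-AbsBoundedness} together with the integral representation of the MBP recalled in Section~\ref{SS:MBK}. Having already established that the absolute operator $(\bm{P}^{\Uu}_{p,1})^+$ is bounded on $L^p(\Uu)$, the work remaining is essentially packaging: verifying $L^p$-boundedness of the signed operator, verifying that its image is holomorphic, and verifying that it reproduces $A^p(\Uu)$.

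First, I would establish boundedness of $\bm{P}^{\Uu}_{p,1}$ from $L^p(\Uu)$ to $L^p(\Uu)$ by the pointwise domination
\[
\bigl|\bm{P}^{\Uu}_{p,1}(f)(z)\bigr|
= \left|\int_{\Uu} K_{p,1}^{\Uu}(z,w)\,f(w)\,dV(w)\right|
\leq (\bm{P}^{\Uu}_{p,1})^+(|f|)(z),
\]
which holds for every $f\in L^p(\Uu)$ by the triangle inequality applied to the integral representation \eqref{eq-MBPintegral}. Taking $L^p$-norms and invoking Theorem~\ref{T:MBP-AbsBoundedness} yields $\|\bm{P}^{\Uu}_{p,1}f\|_p\leq C\|f\|_p$. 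In particular, the integral in \eqref{eq-MBPintegral} converges absolutely for a.e.\ $z\in\Uu$.

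Next, I would show that the image of $\bm{P}^{\Uu}_{p,1}$ lies in $A^p(\Uu)$. By Theorem~\ref{thm-MBK1}, the series \eqref{E:def-of-monomial-basis-kernel} defining $K_{p,1}^{\Uu}(z,w)$ converges locally normally on $\Uu\times\Uu$, so $z\mapsto K_{p,1}^{\Uu}(z,w)$ is holomorphic for every fixed $w$, and one may differentiate under the integral sign on compact subsets of $\Uu$ using Bergman's inequality to dominate $|f|$ locally. Combined with the $L^p$-bound just obtained, this shows $\bm{P}^{\Uu}_{p,1}(f)\in A^p(\Uu)$ for every $f\in L^p(\Uu)$.

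Finally, I would verify idempotency, which also yields surjectivity. By Theorem~\ref{T:monomials-form-a-Schauder-basis} the monomials $\{e_\alpha\}_{\alpha\in\cs_p(\Uu)}$ form a Banach-space basis of $A^p(\Uu)$, so any $g\in A^p(\Uu)$ is the norm-limit of finite linear combinations of monomials. For each such monomial, the integral formula \eqref{eq-MBPintegral} combined with the orthogonality of distinct monomials over the torus action on $\Uu$ gives $\bm{P}^{\Uu}_{p,1}(e_\alpha)=e_\alpha$ (this is the content of Theorem~\ref{thm-mbpmbk} applied to basis elements, since the MBK was built precisely to reproduce them). Boundedness of $\bm{P}^{\Uu}_{p,1}$ on $A^p(\Uu)$ then extends this by continuity, giving $\bm{P}^{\Uu}_{p,1}g=g$ for all $g\in A^p(\Uu)$. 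Hence $\bm{P}^{\Uu}_{p,1}$ is an idempotent with range exactly $A^p(\Uu)$, and surjectivity is immediate. The only real obstacle in this chain is the exchange of integration and summation needed to identify the integral operator \eqref{eq-MBPintegral} with the basis projection on $A^p(\Uu)$, but this is supplied by the locally normal convergence of the MBK from Theorem~\ref{thm-MBK1} and is in any case already contained in Theorem~\ref{thm-mbpmbk}.
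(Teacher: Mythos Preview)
Your approach is essentially the paper's: deduce $L^p$-boundedness of the integral operator from Theorem~\ref{T:MBP-AbsBoundedness} by pointwise domination, then argue that this bounded integral operator is a surjective projection onto $A^p(\Uu)$. The paper accomplishes the second step in one line by invoking Proposition~\ref{prop:MBP-is-a-basis-projection}, whereas you re-derive its content (holomorphy of the image, reproduction of monomials, extension by density). That is fine and arrives at the same place.

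Two small defects are worth fixing. First, your justification of holomorphy invokes ``Bergman's inequality to dominate $|f|$ locally,'' but $f\in L^p(\Uu)$ is not holomorphic, so Bergman's inequality does not apply to it. The clean route (and the one taken inside Proposition~\ref{prop:MBP-is-a-basis-projection}) is to check holomorphy first for $f\in C_c(\Uu)$, where normal convergence of the kernel on $\{z\}\times\mathrm{supp}(f)$ makes it immediate, and then pass to general $f\in L^p(\Uu)$ using that $A^p(\Uu)$ is closed. Second, your appeals to Theorem~\ref{thm-mbpmbk} are circular: that theorem \emph{assumes} the MBP exists and then identifies it with the integral operator, whereas here you are trying to establish existence. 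The result you actually need is Proposition~\ref{prop:MBP-is-a-basis-projection}, which gives the equivalence ``MBP exists $\Leftrightarrow$ the integral operator satisfies $L^p$-estimates'' and, in the course of its proof, verifies the series representation \eqref{E:def-of-MBP-2} via $a_\alpha\circ\wt{\bm{Q}}=\wt{a}_\alpha$. Once you cite the correct proposition, your argument collapses to the paper's three-line proof.
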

\begin{proof}
It is clear that the boundedness of the operator $(\bm{P}^\Uu_{p,1})^+$ on $L^p(\Uu)$ implies the boundedness on $L^p(\Uu)$ of
the integral operator in \eqref{eq-MBPintegral}. However, in Proposition \ref{prop:MBP-is-a-basis-projection}, we will show that whenever this integral operator satisfies $L^p$ estimates, it coincides with the  Monomial Basis Projection $\bm{P}^\Uu_{p,1}: L^p(\Uu) \to A^p(\Uu)$. The MBP is a surjective projection operator 
whenever its defining series \eqref{E:def-of-MBP-2} converges.
\end{proof}

\subsection{Acknowledgements}
The authors thank Željko Čučković, Bernhard Lamel, László Lempert, Jeff McNeal and Brett Wick for their comments and suggestions, which led to mathematical and organizational improvements in this paper.
We also thank the referee for carefully reading the paper and providing constructive suggestions.


\section{Basis Projections}\label{sec-basisproj}
 
\subsection{Bases in Banach spaces}\label{sec-schauderdef}
Since our application uses bases indexed by multi-indices, we need a slightly more general notion of a  basis in a Banach space than that of a Schauder basis described in Section~\ref{sec-intro-projop}.
For a multi-index $\alpha\in \Z^n$, let $\abs{\alpha}_\infty= \max_{1\leq j \leq n} \abs{\alpha_j}$.
 
\begin{definition}\label{def-schauderbasis}
Let $A$ be a Banach space, $n$ a positive integer and $\mathfrak{A} \subset \Z^n$ a set of multi-indices. 
A collection $\{e_\alpha: \alpha\in \mathfrak{A}\}$ of elements of $A$ is said to form a \emph{Banach-space basis} of $A$ if for each $f \in A$, there are unique complex numbers $\{c_\alpha: \alpha\in\mathfrak{A}\}$ such that
\begin{equation}\label{eq-schauderseries}  
f = \lim_{N\to \infty} \sum_{\substack{\abs{\alpha}_\infty\leq N\\\alpha\in \mathfrak{A}}} c_\alpha e_\alpha,  
\end{equation}
where the sequence of partial sums converges to $f$ in the norm-topology of $A$.
The sums on the right hand side of \eqref{eq-schauderseries} whose limit is taken are called  \emph{square partial sums}. 
\end{definition}

Schauder bases  are special cases of this definition corresponding to taking $n=1$ and
$\mathfrak{A}$ the set of positive integers. A related notion is that of a finite dimensional \emph{Schauder decomposition} (see \cite{lt}). A Banach-space basis in our sense determines a Schauder decomposition of the Banach space $A$
into the finite-dimensional subspaces $A_n = \mathrm{span}\{e_\alpha: \abs{\alpha}_\infty=n\}$, $n\geq 0$.

Adapting a classical proof (\cite[Proposition~1.a.2]{lt}), is not difficult to see that for each $\alpha\in \mathfrak{A}$, the map $a_\alpha:A\to \cx$ assigning to an element $x\in A$ the coefficient $c_\alpha$ of the series \eqref{eq-schauderseries} is a bounded linear functional on $A$.
The collection of functionals $\{a_\alpha:\alpha\in \mathfrak{A}\}$ is called the set of \emph{coefficient functionals dual to the basis $\{e_\alpha:\alpha\in \mathfrak{A}\}$}. 

\subsection{Unique Hahn-Banach extension}  
Recall that a normed linear space is  said to be {\em strictly convex}, if for distinct vectors $f,g$ of unit norm, we have $\norm{f+g} < 2$.
\begin{proposition}[\cite{taylor}]\label{prop-taylor} 
If $L$ is a Banach space such that its normed dual $L'$ is strictly convex, and $f:A\to \cx$ is a bounded linear functional on a subspace $A\subset L$, then $f$ admits a {\em unique} norm-preserving extension as a linear functional on $L$.
\end{proposition}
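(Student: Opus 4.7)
The plan is to combine the Hahn--Banach extension theorem (which supplies existence) with the strict convexity of $L'$ (which will force uniqueness). Let me spell out the steps.

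First I would reduce to the case $\|f\|_{A'}=1$. If $f\equiv 0$ on $A$ then the zero functional is the unique norm-preserving extension (any other extension must have norm $0$, hence be zero). Otherwise, replacing $f$ by $f/\|f\|_{A'}$, we may assume $\|f\|_{A'}=1$. By the classical Hahn--Banach theorem, there exists at least one extension $F\in L'$ with $F|_A=f$ and $\|F\|_{L'}=1$; so it suffices to show there is at most one.

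Next, suppose for contradiction that $F_1, F_2 \in L'$ are two distinct norm-preserving extensions, so that $F_1|_A = F_2|_A = f$ and $\|F_1\|_{L'}=\|F_2\|_{L'}=1$. Consider their average $G = \tfrac{1}{2}(F_1+F_2)\in L'$. On the one hand, by the triangle inequality $\|G\|_{L'} \leq \tfrac{1}{2}(\|F_1\|_{L'}+\|F_2\|_{L'}) = 1$. On the other hand, $G|_A = f$, and since $\|f\|_{A'}=1$, the restriction of $G$ to $A$ already has norm $1$, forcing $\|G\|_{L'}\geq 1$. Therefore $\|G\|_{L'}=1$, which is the same as saying $\|F_1+F_2\|_{L'}=2$.

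Finally I would invoke the hypothesis that $L'$ is strictly convex. By definition, whenever $F_1,F_2$ are distinct unit vectors in $L'$, one has $\|F_1+F_2\|_{L'}<2$. This directly contradicts what we just derived, so $F_1=F_2$ and the norm-preserving extension is unique. The only potentially subtle step is the lower bound $\|G\|_{L'}\geq 1$: it is immediate from $G|_A=f$ together with the observation that restriction to $A$ cannot increase the dual norm, so I do not anticipate any real obstacle in executing this plan.
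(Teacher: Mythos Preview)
Your proof is correct and follows essentially the same argument as the paper: reduce to $\|f\|_{A'}=1$, average two putative distinct norm-preserving extensions, and derive a contradiction with the strict convexity of $L'$. You are in fact slightly more careful than the paper, since you handle the trivial case $f\equiv 0$ separately and spell out why $\|G\|_{L'}\geq 1$.
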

\begin{proof}
That at least one functional extending $f$ and having the same norm exists is the content of the Hahn-Banach theorem. 
Without loss of generality, the norm of $f$ as an element of $A'$ is 1. 
Suppose that $f$ admits two distinct extensions $f_1, f_2\in L'$ such that $\norm{f_1}_{L'}=\norm{f_2}_{L'}=1$. 
Then $g=\frac{1}{2}(f_1+f_2)$ is yet another extension of $f$ to an element of $L'$, so $\norm{g}_{L'}\geq \norm{f}_{A'}=1$. 
On the other hand, thanks  to the strict convexity of $L'$, we have $\norm{g}_{L'}< \frac{1}{2}\cdot 2 =1$. This contradiction shows that $f_1=f_2$.
\end{proof}

The examples of unique Hahn-Banach extensions in this paper arise from the following:
\begin{proposition}\label{prop-lpstrictconvex} 
Let $(X, \mathcal{F},\mu)$ be a measure space, and $1<p <\infty$. The dual of $L^p(\mu)$ is strictly convex. 
\end{proposition}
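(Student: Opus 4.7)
The plan is to reduce the assertion to the strict convexity of $L^q(\mu)$, where $q$ is the index Hölder-conjugate to $p$, via the standard isometric identification $(L^p(\mu))' \cong L^q(\mu)$ valid for $1<p<\infty$ (this holds without a $\sigma$-finiteness hypothesis in this range because every $L^p$-function is supported on a $\sigma$-finite set). Since $1<q<\infty$, what must be shown is that $\|f+g\|_q < 2$ whenever $f,g \in L^q(\mu)$ satisfy $\|f\|_q = \|g\|_q = 1$ and $f \ne g$.

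First, I would fix such $f$ and $g$ and observe that Minkowski's inequality already gives $\|f+g\|_q \le 2$. To exclude equality, I would invoke the equality case of Minkowski's inequality for $1<q<\infty$: the norms satisfy $\|f+g\|_q = \|f\|_q + \|g\|_q$ if and only if $f$ and $g$ are a.e.\ non-negatively proportional, i.e., there exists $\lambda \ge 0$ with $g = \lambda f$ a.e. (or $f=0$). Combined with the normalization $\|f\|_q = \|g\|_q = 1$, this forces $\lambda = 1$ and hence $f = g$ a.e., contradicting the hypothesis.

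Alternatively, and perhaps more cleanly, one could cite Clarkson's inequalities directly. For $2 \le q < \infty$ these give
\[
\left\|\frac{f+g}{2}\right\|_q^q + \left\|\frac{f-g}{2}\right\|_q^q \le \tfrac{1}{2}\bigl(\|f\|_q^q+\|g\|_q^q\bigr) = 1,
\]
so if $\|f+g\|_q = 2$ then $\|f-g\|_q = 0$ and $f=g$ a.e.; the complementary Clarkson inequality handles $1 < q < 2$. These imply the stronger statement that $L^q(\mu)$ is uniformly convex, from which strict convexity follows immediately.

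The main obstacle is really the equality case of Minkowski's inequality (or, equivalently, the Clarkson inequalities themselves), which rests ultimately on the strict convexity of the real function $t \mapsto t^q$ on $[0,\infty)$ for $q>1$ — a pointwise fact that is then integrated out. Everything else in the argument — the duality identification, Minkowski's inequality, and the passage from proportionality plus equal norms to equality — is routine bookkeeping.
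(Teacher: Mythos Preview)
Your proposal is correct and follows essentially the same approach as the paper: identify $(L^p(\mu))'$ with $L^q(\mu)$, then deduce strict convexity of $L^q(\mu)$ from the equality case of Minkowski's inequality, which forces $f=cg$ with $c>0$ and hence $f=g$ under the normalization. The Clarkson-inequality alternative you mention is a valid embellishment but not used in the paper.
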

\begin{proof}
Since the dual of $L^p(\mu)$ can be isometrically identified with $L^q(\mu)$ where $q$ is the exponent conjugate to $p$, it suffices to check that $L^q(\mu)$ is strictly convex. 
Let $f,g$ be distinct elements of $L^q(\mu)$ such that $\norm{f}_{q}=\norm{g}_{q}=1$. Suppose we have $\norm{f+g}_{q} =2= \norm{f}_{q}+\norm{g}_{q}$, so that we have equality in the Minkowski triangle inequality for $L^q(\mu)$. 
It is well-known that equality occurs in the Minkowski triangle inequality only if $f= c g$ for some $c>0$. 
But since $\norm{f}_{q}=\norm{g}_{q}=1$ this gives that $c=1$, which is a contradiction since $f\not =g$. 
Therefore $\norm{f+g}_{q}<2$ showing  that $L^q(\mu)$ is strictly convex. 
\end{proof}

\subsection{Basis projections}\label{SS:basis-projections}
Let $L$ be a Banach space such that its dual is strictly convex, $A$ be a closed subspace, the collection $\{e_\alpha: \alpha\in \mathfrak{A}\}$ a Banach-space basis of $A$ in the sense of Definition~\ref{def-schauderbasis}, and let $\{a_\alpha: \alpha\in \mathfrak{A}\}$ be the coefficient functionals dual to this basis. 
Let $\wt{a}_\alpha: L \to \cx$ be the unique Hahn-Banach extension of the functional $a_\alpha:A\to \cx$, where uniqueness follows by Propositon~\ref{prop-taylor}.

\begin{definition}\label{D:basis-projection}
A bounded linear projection operator $\bm{P}$ from $L$ onto $A$ is called the \emph{basis projection} determined by $\{e_\alpha: \alpha\in \mathfrak{A}\}$, if for each $f \in L$, we have a series representation convergent in the norm of $L$ given by
\begin{equation}\label{eq-domain}
\bm{P}f = \lim_{N\to \infty} \sum_{\substack{\abs{\alpha}_\infty\leq N\\ \alpha\in \mathfrak{A} }}\wt{a}_\alpha(f) e_\alpha.
\end{equation}
\end{definition}


\subsection{The Szeg\H{o} projection.}

Let $1<p<\infty$, $L=L^p(\T)$, the $L^p$-space of the circle with the normalized Haar measure $\frac{1}{2\pi} d\theta$, and  $A=H^p(\D)$, the Hardy space of the unit disc, the subspace of $L^p(\T)$ consisting of those elements of $L^p(\T)$ which are boundary values of holomorphic functions in the disc. 
Let $\tau_\alpha(e^{i\theta})= e^{i\alpha\theta},\, \alpha\in \Z,$ denote the $\alpha$-th trigonometric monomial on $\T$.
It is well-known that $\{\tau_\alpha: \alpha\geq 0\}$ is a (normalized) Schauder basis of $H^p(\D)$, i.e., the partial sums of the Fourier series of a function in  $H^p(\D)$ converge in the norm $L^p(\T)$. Notice that Schauder bases are simply Banach-space bases in the sense of Definition~\ref{def-schauderbasis} where  $\mathfrak{A}$ is the set of positive integers.
\begin{proposition} \label{prop-szego}
For $1<p<\infty$, the basis projection from $L^p(\T)$ onto $H^p(\D)$ determined by the Schauder basis $\{\tau_\alpha\}_{\alpha=0}^\infty$ exists, and coincides with the Szeg\H{o} projection.
\end{proposition}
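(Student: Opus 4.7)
The plan is to identify the coefficient functionals of the Schauder basis explicitly, verify uniqueness of their Hahn--Banach extensions to $L^p(\T)$, and then invoke the M.~Riesz theorem on $L^p$-boundedness of Fourier partial sums to obtain convergence of the basis series to the Szeg\H{o} projection.

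First I would identify the coefficient functionals dual to $\{\tau_\alpha\}_{\alpha\geq 0}$ as the non-negative Fourier coefficient functionals. Given the unique $L^p$-convergent expansion $f = \sum_{\alpha\geq 0} c_\alpha \tau_\alpha$ of $f\in H^p(\D)$, pairing the partial sums with $\ol{\tau_\beta}$ against normalized Haar measure (which is a bounded linear functional on $L^p(\T)$) shows that $c_\beta = \frac{1}{2\pi}\int_0^{2\pi} f(e^{i\theta})e^{-i\beta\theta}\,d\theta$, so $a_\alpha(f)$ is precisely the $\alpha$-th Fourier coefficient of $f$.

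Next, I would define $\wt{a}_\alpha: L^p(\T)\to\C$ by the same integral formula; this manifestly extends $a_\alpha$. Under the $L^p$--$L^q$ duality pairing, $\wt{a}_\alpha$ is represented by the function $e^{-i\alpha\theta}\in L^q(\T)$, which has $L^q$-norm $1$, so $\norm{\wt{a}_\alpha}_{L^p(\T)'}=1$. Since $\tau_\alpha\in H^p(\D)$ has norm $1$ with $a_\alpha(\tau_\alpha)=1$, we also have $\norm{a_\alpha}_{H^p(\D)'}=1$, so $\wt{a}_\alpha$ is norm-preserving. By Propositions~\ref{prop-taylor} and~\ref{prop-lpstrictconvex}, $\wt{a}_\alpha$ is then the \emph{unique} Hahn--Banach extension of $a_\alpha$ from $H^p(\D)$ to $L^p(\T)$.

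To finish, the square partial sums of Definition~\ref{D:basis-projection} become
\begin{equation*}
\sum_{\alpha=0}^{N}\wt{a}_\alpha(f)\,\tau_\alpha,
\end{equation*}
which are precisely the truncations of the Fourier series of $f$ to non-negative frequencies. By M.~Riesz's classical theorem these operators are uniformly bounded on $L^p(\T)$ for $1<p<\infty$ and converge in the $L^p$-norm (using density of trigonometric polynomials together with uniform boundedness) to the Szeg\H{o} projection $\bm{S}:L^p(\T)\to H^p(\D)$. This simultaneously produces the basis projection and shows it coincides with $\bm{S}$. The only substantive ingredient is M.~Riesz's theorem (equivalently, $L^p$-boundedness of the Hilbert transform on $\T$); the rest is bookkeeping, with strict convexity of $L^q(\T)$ doing the work of pinning down the extensions uniquely.
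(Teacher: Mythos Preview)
Your proof is correct and follows essentially the same path as the paper: identify the dual coefficient functionals as Fourier coefficients, verify that their norm-preserving extensions to $L^p(\T)$ are again the Fourier coefficient functionals (hence unique by strict convexity of $L^q$), and then invoke M.~Riesz to conclude. The only cosmetic difference is that the paper checks agreement with the Szeg\H{o} projection by summing the geometric series on trigonometric polynomials to recover the Szeg\H{o} kernel, whereas you phrase the last step in terms of uniform boundedness of the one-sided partial-sum operators; both arguments rest on the same classical ingredient.
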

\begin{proof}
The coefficient functionals on $H^p(\D)$ dual to the Schauder basis $\{\tau_\alpha: \alpha\geq 0\}$ are precisely the Fourier coefficient functionals $\{a_\alpha\}_{\alpha=0}^\infty$:
\begin{equation}\label{eq-fourier}
a_\alpha(f)= \int_{0}^{2\pi} f(e^{i\theta})e^{-i\alpha \theta}\frac{d\theta}{2\pi}, \qquad f\in H^p(\D).
\end{equation}
Notice that for $f\in H^p(\D)$, we have
\begin{equation}
    \label{eq-fourier2}
    \abs{a_\alpha(f)} \leq \int_{0}^{2\pi} \abs{f(e^{i\theta})}\frac{d\theta}{2\pi} \leq \norm{f}_{L^p(\T)}\norm{1}_{L^q(\T)} = \norm{f}_{L^p(\T)}, 
\end{equation}
where $q$ is the Hölder conjugate of $p$, and we use Hölder's inequality along with the fact that the measure is a probability measure. 
Therefore $\norm{a_\alpha} \leq 1$. But since $\norm{\tau_\alpha}_{L^p(\T)}=1$, and $a_\alpha(\tau_\alpha)=1$, it follows that $\norm{a_\alpha}=1$. 
We now claim that the Hahn-Banach extension $\wt{a}_\alpha:L^p(\T)\to \cx$ of the coefficient functional $a_\alpha:H^p(\D)\to \cx$ is still the Fourier coefficient functional:
\[ 
\wt{a}_\alpha(f)= \int_{0}^{2\pi} f(e^{i\theta})e^{-i\alpha \theta}\frac{d\theta}{2\pi}, \qquad f\in L^p(\T).
\]
Indeed, $\wt{a}_\alpha$ is an extension of $a_\alpha$, and repeating the argument of \eqref{eq-fourier2} shows $\norm{\wt{a}_\alpha}=1$, and thus it is a Hahn-Banach extension. 
Uniqueness follows from Propositions~\ref{prop-taylor} and \ref{prop-lpstrictconvex}. 

Let $\bm{S}$ denote the basis projection from $L^p(\T)$ onto $H^p(\D)$ and let $f\in L^p(\T)$ be a trigonometric polynomial. 
Then formula \eqref{eq-domain} in this case becomes:
\[
\bm{S}f(e^{i\phi}) = \sum_{\alpha=0}^\infty \left(\int_0^{2\pi} f(e^{i\theta})e^{-i\alpha \theta}\frac{d\theta}{2\pi}\right) e^{i\alpha\phi} = \int_0^{2\pi}\frac{f(e^{i\theta})}{1- e^{i(\phi-\theta)}}\cdot\frac{d\theta}{2\pi}.
\]
This shows that on the trigonometric polynomials, the basis projection coincides with the Szeg\H{o} projection, which is known to be represented by the singular integral at the end of the above chain of equalities. 
But as the Szeg\H{o} projection is bounded from $L^p(\T)$ onto $H^p(\D)$, it follows that the basis projection exists and equals the Szeg\H{o} projection on $L^p(\T)$.
\end{proof}

\subsection{The Monomial Basis Projection}
On a Reinhardt domain $\Omega\subset \cx^n$ each holomorphic function $f\in\mathcal{O}(\Omega)$ has a unique \emph{Laurent expansion}
\begin{equation}\label{eq-laurent}
    f=\sum_{\alpha\in \Z^n} c_\alpha e_\alpha,
\end{equation}
where  $c_\alpha\in\cx$ and the series converges locally normally, i.e., for each compact $K\subset \Omega$, the sum $\sum_\alpha \norm{c_\alpha e_\alpha}_K<\infty$, where $\norm{\cdot}_K = \sup_K\abs{\cdot}$ is the sup norm (see e.g.\!\! \cite{range}).
It follows that \eqref{eq-laurent} converges 
uniformly on compact subsets of $\Omega$. 
Define 
\begin{equation}\label{eq-laurentcoefficient}
    a_\alpha:\mathcal{O}(\Omega)\to \cx, \qquad a_\alpha(f)= c_\alpha
\end{equation}
where $c_\alpha$ is as above in \eqref{eq-laurent}. The functional $a_\alpha$ is called the $\alpha$-th \emph{Laurent coefficient functional} of the domain $\Omega$.

The following result shows that the Laurent monomials (under an appropriate ordering) form a  basis of the Bergman space $A^p(\Omega,\lambda)$, where $\lambda$ is an admissible multi-radial weight. 
The unweighted version of this result (the case $\lambda\equiv 1$) was proved in \cite{ChEdMc19}, inspired by the case of the disc considered in \cite{zhu1991}. 
The more general Theorem \ref{T:monomials-form-a-Schauder-basis} is proved in exactly the same way, by replacing the implicit weight $\lambda \equiv 1$ in \cite[Theorem~3.11]{ChEdMc19} with a general multi-radial weight $\lambda$. 
A key ingredient of the proof, the density of Laurent polynomials in $A^p(\Omega, \lambda)$, can also be proved
using Cesàro summability of power series (see \cite[Theorem~2.5]{chakdawn}.)
Recall that the notation and  conventions established in Section~\ref{sec-notation} are in force throughout the paper.
 
\begin{theorem}\label{T:monomials-form-a-Schauder-basis}
The collection of Laurent monomials $\{e_\alpha: \alpha \in \cs_p(\Omega,\lambda) \}$ forms a Banach-space basis of $A^p(\Omega,\lambda)$. 
The functionals dual to this basis are the coefficient functionals $\{a_\alpha: \alpha \in \cs_p(\Omega,\lambda) \}$, and the norm of $a_\alpha:A^p(\Omega, \lambda) \to \C$ is given by
\begin{equation}\label{eq-aalphanorm} 
\norm{{a_\alpha}}_{A^p(\Omega, \lambda)'} = \frac{1}{\norm{e_\alpha}_{p,\lambda}}.
\end{equation}
\end{theorem}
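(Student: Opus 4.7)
The plan is to transfer the Laurent expansion of functions in $A^p(\Omega,\lambda)$ to Fourier expansions on the torus $\T^n$ via the multi-radial rotation action, and then invoke the classical M.~Riesz $L^p$-boundedness theorem. For $f\in A^p(\Omega,\lambda)$ and $z\in\Omega$, set $F_z(\theta) := f(e^{i\theta_1}z_1,\ldots,e^{i\theta_n}z_n)$. The local uniform convergence of the Laurent series $f=\sum_{\alpha\in\Z^n} a_\alpha(f)\,e_\alpha$ on $\Omega$ yields $F_z(\theta)=\sum_\alpha a_\alpha(f)\,e_\alpha(z)\,e^{i\alpha\cdot\theta}$, so by uniqueness of Fourier coefficients on $\T^n$,
\[
a_\alpha(f)\,e_\alpha(z) \;=\; \frac{1}{(2\pi)^n}\int_{[0,2\pi]^n} F_z(\theta)\,e^{-i\alpha\cdot\theta}\,d\theta.
\]
This averaging identity is the engine of the proof.

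First I would derive the norm formula for $a_\alpha$. Applying Jensen's inequality to the identity above (since $(2\pi)^{-n}d\theta$ is a probability measure and $\abs{\cdot}^p$ is convex), multiplying by $\lambda(z)$, integrating over $\Omega$, and applying Fubini together with the $\T^n$-invariance of both $\Omega$ and $\lambda$ yields
\[
\abs{a_\alpha(f)}^p\,\norm{e_\alpha}_{p,\lambda}^p \;\leq\; \norm{f}_{p,\lambda}^p.
\]
For $\alpha\in\cs_p(\Omega,\lambda)$ this gives $\norm{a_\alpha}_{A^p(\Omega,\lambda)'}\leq 1/\norm{e_\alpha}_{p,\lambda}$, and the reverse inequality is immediate from $a_\alpha(e_\alpha)=1$. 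For $\alpha\notin\cs_p(\Omega,\lambda)$, the left side is $\abs{a_\alpha(f)}^p\cdot\infty$, forcing $a_\alpha(f)=0$ and confirming that only indices in $\cs_p(\Omega,\lambda)$ appear in the expansion of an $A^p(\Omega,\lambda)$-element.

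Next I would prove that the square partial sum operators $S_N f := \sum_{\abs{\alpha}_\infty\leq N,\,\alpha\in\cs_p(\Omega,\lambda)} a_\alpha(f)\,e_\alpha$ are uniformly bounded on $A^p(\Omega,\lambda)$. The averaging identity exhibits $(S_N f)(e^{i\phi}\cdot z)$ as the $n$-fold tensor product of one-dimensional Dirichlet partial sums applied to $F_z$, evaluated at $\phi$. By the classical M.~Riesz theorem for $1<p<\infty$, this tensored operator is bounded on $L^p(\T^n)$ with constant $C_p^n$ independent of $N$. Integrating the pointwise estimate
\[
\int_{\T^n}\abs{(S_N f)(e^{i\phi}\cdot z)}^p\,d\phi \;\leq\; C_p^{np}\int_{\T^n}\abs{F_z(\phi)}^p\,d\phi
\]
against $\lambda(z)\,dV(z)$ over $\Omega$ and invoking multi-radial invariance again yields $\norm{S_N f}_{p,\lambda}\leq C_p^n\,\norm{f}_{p,\lambda}$, uniformly in $N$.

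Combined with density of Laurent polynomials in $A^p(\Omega,\lambda)$---which the text attributes to Ces\`aro summability, i.e., replacing Dirichlet partial sums by their Fej\'er means, whose kernel is a positive probability measure on $\T^n$ and hence produces a contractive approximate identity---the standard density plus uniform boundedness argument shows $S_N f\to f$ in $A^p(\Omega,\lambda)$ for every $f$. Uniqueness of the representation is then forced by Bergman's inequality \eqref{eq-bergmanineq}: $A^p$-norm convergence of the partial sums implies locally uniform convergence on $\Omega$, so the coefficients must coincide with the Laurent coefficients $a_\alpha(f)$. The main obstacle I expect is carrying out the density step cleanly under mere admissibility of $\lambda$; for continuous positive weights the Fej\'er averaging argument runs without incident, but the general admissible case requires the more delicate arguments in the references alluded to in the text.
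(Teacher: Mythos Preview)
Your proposal is correct and follows essentially the same approach as the paper, which does not reprove the result but refers to \cite[Theorem~3.11]{ChEdMc19} (the unweighted case, whose proof via torus averaging and the M.~Riesz theorem you have reproduced) and indicates that inserting a multi-radial admissible weight $\lambda$ changes nothing structural. Your identification of the density step under mere admissibility as the one delicate point is also exactly what the paper flags, pointing to the Ces\`aro-summability argument of \cite[Theorem~2.5]{chakdawn}.
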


Thus, if $f\in A^p(\Omega,\lambda)$, the Laurent series of $f$ written as $\sum_{\alpha\in \Z^n} a_\alpha(f) e_\alpha$ consists only of terms corresponding to monomials $e_\alpha \in A^p(\Omega,\lambda)$, i.e., if $\alpha \not \in \mathcal{S}_p(\Omega,\lambda) $, then $a_\alpha(f) = 0$.



We are ready to formally define the main object of this paper:
\begin{definition}\label{D:def-of-MBP-1}
A bounded linear projection $\bm{P}_{p,\lambda}^\Omega$ from $L^p(\Omega,\lambda)$ onto $A^p(\Omega,\lambda)$ is called the \emph{Monomial Basis Projection} of $A^p(\Omega,\lambda)$, if for $f \in L^p(\Omega,\lambda)$ it admits the series representation convergent in the norm of $L^p(\Omega,\lambda)$ given by
\begin{equation}\label{E:def-of-MBP-2}
\bm{P}^\Omega_{p,\lambda}(f) = \lim_{N\to \infty} \sum_{\substack{\abs{\alpha}_\infty\leq N \\ \alpha\in \cs_p(\Omega,\lambda) }}\wt{a}_\alpha(f) e_\alpha,
\end{equation}
where $\wt{a}_\alpha:L^p(\Omega,\lambda) \to \C$ is the unique Hahn-Banach extension of the coefficient functional $a_\alpha:A^p(\Omega,\lambda) \to \C$.
\end{definition}

\begin{remark}\label{R:surjectivity}
The surjectivity onto the space $A^p(\Omega,\lambda)$ is built in to the definition of the Monomial Basis Projection, since it acts as the identity operator there.
Notice that the MBP is a basis projection in the sense of Definition~\ref{D:basis-projection}, when $L=L^p(\Omega), A=A^p(\Omega)$ and $\{e_\alpha\}$ is the  monomial basis of $A^p(\Omega,\lambda)$.
\hfill $\lozenge$
\end{remark}

\section{The monomial basis kernel}\label{sec-MBK}

\subsection{Existence of the kernel function} 
The Monomial Basis Kernel of $A^p(\Omega,\lambda)$ was introduced as a formal series in \eqref{E:def-of-monomial-basis-kernel}.
Using \eqref{eq-chip} and \eqref{eq-chipstar}, we can write
\begin{align}\label{eq-chistaralpha}
\chi_p^*e_\alpha(w) 
=e_\alpha(w)\abs{e_\alpha(w)}^{p-2},
\end{align}
which allows for the re-expression of the MBK  as
\begin{equation}\label{E:MBK-alt-form}
    K_{p,\lambda}^\Omega(z,w) =\sum_{\alpha\in \mathcal{S}_p(\Omega, \lambda) }\frac{e_\alpha(z) \ol{e_\alpha(w)}\abs{e_\alpha(w)}^{p-2}} {\norm{e_\alpha}_{p,\lambda}^p}.
\end{equation}
A sufficient condition for the convergence of this series is now given. 
\begin{theorem}\label{thm-MBK1}
Let $\Omega$ be a \emph{pseudoconvex}  Reinhardt domain in $\C^n$ and $\lambda$ be an admissible  multi-radial weight function on $\Omega$. The series \eqref{E:MBK-alt-form} defining $K^\Omega_{p,\lambda}(z,w)$ converges locally normally on  $\Omega \times \Omega$.  
\end{theorem}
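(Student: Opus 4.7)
Because $\abs{\chi_p^*e_\alpha(w)}=\abs{e_\alpha(w)}^{p-1}$, the series \eqref{E:MBK-alt-form} restricted to a compact product $K\times L\ssubset\Omega\times\Omega$ is majorized in absolute value by
\[
\sum_{\alpha\in \cs_p(\Omega,\lambda)}\frac{(\sup_K\abs{e_\alpha})(\sup_L\abs{e_\alpha})^{p-1}}{\norm{e_\alpha}_{p,\lambda}^p},
\]
so my task reduces to proving this sum is finite. A covering argument lets me assume that $K$ and $L$ are small closed Reinhardt polydiscs centered at arbitrary points $z_0,w_0\in\Omega$ (possibly lying on coordinate axes). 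The proof combines the admissibility inequality \eqref{eq-bergmanineq} with a geometric decay estimate for monomials.

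\textbf{Step 1 (Geometric decay).} Given a closed Reinhardt polydisc $K\ssubset\Omega$, choose a slightly larger closed Reinhardt polydisc $K'\ssubset\Omega$ with the same center. Since both are products of discs or annuli, the supremum of any Laurent monomial factors across coordinates:
\[
\sup_K\abs{e_\alpha}=\prod_{j=1}^{n}\sup_{z\in K}\abs{z_j}^{\alpha_j}.
\]
A coordinate-by-coordinate comparison of inner/outer radii on $K$ versus $K'$ yields a uniform constant $\rho=\rho(K,K')\in(0,1)$ with
\[
\sup_K\abs{e_\alpha}\leq \rho^{\abs{\alpha}}\,\sup_{K'}\abs{e_\alpha},\qquad \abs{\alpha}:=\sum_j\abs{\alpha_j},
\]
for every $\alpha\in\Z^n$ with $e_\alpha\in\mathcal{O}(\Omega)$. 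When $z_0\in(\C^*)^n$, this is cleanest in log-coordinates: $\sup_K\abs{e_\alpha}=\exp\bigl(h_{\log\abs{K}}(\alpha)\bigr)$ is the support function at $\alpha$ of a compact convex body sitting inside the open convex set $\log\abs{\Omega\cap(\C^*)^n}$ (convex by pseudoconvexity), and inflating that body by a small ball increases the support function by a constant multiple of the norm of $\alpha$. When $z_0$ has vanishing coordinates, the relevant exponents $\alpha_j$ are forced to be non-negative by holomorphicity, and a direct multiplicative comparison using factors $(1+\delta)^{\alpha_j}$ gives the same estimate.

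\textbf{Step 2 (Finishing).} Admissibility of $\lambda$ gives $\sup_{K'}\abs{e_\alpha}\leq C_{K'}\norm{e_\alpha}_{p,\lambda}$, and the same reasoning applied to an enlargement $L\ssubset L'\ssubset\Omega$ yields $\sup_L\abs{e_\alpha}\leq C_{L'}\,\tilde\rho^{\,\abs{\alpha}}\norm{e_\alpha}_{p,\lambda}$ for some $\tilde\rho\in(0,1)$. Substituting,
\[
\frac{(\sup_K\abs{e_\alpha})(\sup_L\abs{e_\alpha})^{p-1}}{\norm{e_\alpha}_{p,\lambda}^p}\leq C_{K'}\,C_{L'}^{p-1}\,\bigl(\rho\,\tilde\rho^{\,p-1}\bigr)^{\abs{\alpha}},
\]
and since $\rho\,\tilde\rho^{\,p-1}<1$, the series is dominated by the convergent geometric series $\sum_{\alpha\in\Z^n}q^{\abs{\alpha}}$. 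This establishes locally normal convergence of \eqref{E:MBK-alt-form} on $\Omega\times\Omega$. The main obstacle is the uniformity of $\rho$ in Step 1 across interior and axis-touching centers: log-coordinates handle the interior case cleanly but degenerate on the axes, so a separate multiplicative comparison must take over there, and presenting both regimes within a single uniform estimate is the main bookkeeping point.
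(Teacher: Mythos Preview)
Your argument is correct and takes a route different from the paper's. The paper substitutes $t_j=z_j\ol{w_j}\abs{w_j}^{p-2}$ to turn the MBK into a single Laurent-type series $\sum_\alpha t^\alpha/\norm{e_\alpha}_{p,\lambda}^p$, invokes an Abel-type lemma (Lemma~\ref{lem-abel}, quoted from \cite{JarPflBook08}) together with Bergman's inequality to obtain locally normal convergence of that series on the Reinhardt power $\Omega^{(p)}$, and then uses the convexity of $\log\Omega$---i.e.\ pseudoconvexity---to show that the map $(z,w)\mapsto t$ carries $\Omega\times\Omega$ into $\Omega^{(p)}$. You instead work directly on $\Omega\times\Omega$: the geometric decay estimate $\sup_K\abs{e_\alpha}\leq\rho^{\abs{\alpha}}\sup_{K'}\abs{e_\alpha}$ unpacks the Abel-lemma mechanism by hand, and Bergman's inequality on the enlarged set $K'$ finishes the job. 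Your route is more elementary and self-contained, at the cost of handling the axis-touching case separately. A curious bonus: your argument never actually uses pseudoconvexity---the parenthetical ``convex by pseudoconvexity'' is decorative, since your support-function comparison only needs $\log\abs{K}\subset\operatorname{int}(\log\abs{K'})$, not any global convexity of $\log\abs{\Omega}$. So you have in fact proved the statement for arbitrary Reinhardt domains, whereas the paper's route through $\Omega^{(p)}$ genuinely requires log-convexity to control the image of the substitution map.
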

We need two lemmas for the proof of this result. 
The first is an analog for Laurent series of Abel's lemma on the domain of convergence of a Taylor series (\cite[p. 14]{range}):

\begin{lemma}\label{lem-abel}
Let $\Omega\subset \cx^n$ be a Reinhardt domain, define $\mathcal{S}(\Omega) = \{\alpha\in \Z^n : e_\alpha \in \mathcal{O}(\Omega)\}$, and for coefficients $a_\alpha \in \cx,\,\alpha\in\mathcal{S}(\Omega)$, let 
\begin{equation}\label{eq-series}
\sum_{\alpha\in \mathcal{S}(\Omega)}a_\alpha e_\alpha
\end{equation}
be a formal Laurent series on $\Omega$. 
Suppose that for each $z\in \Omega$ there is a $C>0$ such that for each $\alpha\in \mathcal{S}(\Omega)$ we have $\abs{a_\alpha e_\alpha(z)}\leq C.$
Then \eqref{eq-series} converges locally normally on $\Omega$.
\end{lemma}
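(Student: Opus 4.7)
My approach will be a direct Abel-type argument built on pointwise bounds at $2^n$ carefully chosen ``test points'' surrounding each point of the target compact set. Normal convergence on a neighborhood of any point then follows from a geometric estimate on ratios of Laurent monomials, and a finite cover of the compact set finishes the proof. The strategy avoids any Baire-type detour by exploiting the hypothesis at \emph{every} point of $\Omega$, not just one.

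First I would restrict to the generic case $K \subset \Omega^{*} := \Omega \cap (\cx^{*})^n$ and fix a point $z^0 \in K$. Since $\Omega$ is open and Reinhardt, there is $\delta>0$ such that the closed polyannulus
\[
\overline{P}(z^0,\delta) = \{w\in \cx^n : (1-\delta)|z^0_j| \leq |w_j| \leq (1+\delta)|z^0_j|,\ 1\leq j \leq n\}
\]
lies inside $\Omega$. For each sign pattern $\sigma \in \{-1,+1\}^n$, choose a test point $w^\sigma$ with $|w^\sigma_j| = |z^0_j|(1 + \sigma_j\delta/2)$. By the hypothesis there are constants $C_\sigma$ with $|a_\alpha e_\alpha(w^\sigma)| \leq C_\sigma$ for every $\alpha \in \mathcal{S}(\Omega)$; set $C = \max_\sigma C_\sigma$, which is finite since there are only $2^n$ patterns.

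The key estimate comes from choosing $\sigma_j = \mathrm{sgn}(\alpha_j)$ componentwise (with $\sigma_j = +1$ when $\alpha_j = 0$). For $z$ in the smaller polyannulus $P(z^0,\delta/4)$,
\[
\frac{|e_\alpha(z)|}{|e_\alpha(w^\sigma)|} \,=\, \prod_{j=1}^n \left(\frac{|z_j|}{|w^\sigma_j|}\right)^{\alpha_j} \,\leq\, \rho^{|\alpha|_1},
\]
for a constant $\rho = \rho(\delta) < 1$, where $|\alpha|_1 := \sum_j |\alpha_j|$. Indeed for each $j$ the factor $|z_j|/|w^\sigma_j|$ raised to $\alpha_j$ (with the chosen sign of $\sigma_j$) is bounded by a fixed $\rho < 1$ to the power $|\alpha_j|$, regardless of whether $\alpha_j$ is positive or negative. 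Multiplying through by $|a_\alpha e_\alpha(w^\sigma)| \leq C$ gives $|a_\alpha e_\alpha(z)| \leq C \rho^{|\alpha|_1}$, uniformly for $z \in P(z^0,\delta/4)$. Summing over $\alpha \in \mathcal{S}(\Omega) \subseteq \Z^n$ yields a convergent geometric series, so the Laurent series converges normally on the neighborhood $P(z^0,\delta/4)$. Covering $K$ by finitely many such neighborhoods and taking a common upper bound delivers $\sum_{\alpha\in\mathcal{S}(\Omega)} \sup_K |a_\alpha e_\alpha| < \infty$.

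The one subtlety I anticipate is the case when $z^0 \in K$ lies on a coordinate hyperplane $\{z_j = 0\}$. Here the Reinhardt assumption together with $e_\alpha \in \co(\Omega)$ forces $\alpha_j \geq 0$ for every $\alpha \in \mathcal{S}(\Omega)$, so the polyannulus must be replaced by a polydisc in the corresponding coordinate(s). The Abel argument then requires only the sign patterns $\sigma$ with $\sigma_j = +1$ in those ``degenerate'' slots, and a one-sided version of the same estimate $|z_j|/|w^\sigma_j| \leq \rho < 1$ for $\alpha_j \geq 0$ suffices. The main work is thus the two-sided Abel lemma for Laurent series on $\Omega^*$; the axis adaptation simplifies the combinatorics rather than complicating it.
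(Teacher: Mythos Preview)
Your argument is correct. The paper does not actually supply a proof of this lemma: it simply cites Lemma~1.6.3 and Proposition~1.6.5 of Jarnicki--Pflug, \emph{First Steps in Several Complex Variables: Reinhardt Domains}. Your direct Abel-type proof is self-contained and is essentially the standard argument one finds in that reference: pick $2^n$ test points around $z^0$ whose moduli dominate (or are dominated by) those of $z^0$ in each coordinate according to the sign of $\alpha_j$, invoke the pointwise boundedness hypothesis at each, and deduce a uniform geometric decay $C\rho^{|\alpha|_1}$ on a smaller polyannulus. The treatment of points on a coordinate hyperplane is also handled correctly, since $e_\alpha\in\mathcal{O}(\Omega)$ forces $\alpha_j\ge 0$ whenever $\Omega$ meets $\{z_j=0\}$, so only one-sided test points are needed in those coordinates. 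The only place to tighten the exposition is the finite-cover step: to conclude $\sum_\alpha \sup_K|a_\alpha e_\alpha|<\infty$, bound $\sup_K$ by $\sum_i \sup_{P_i}$ rather than by a single $\max_i$, so that summability in $\alpha$ follows from the finitely many convergent geometric series.
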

\begin{proof}
See Lemma~1.6.3 and Proposition~1.6.5 of \cite[Section~1.6]{JarPflBook08}.
\end{proof}

Given a Reinhardt domain $\Omega\subset \cx^n$ and a number $m>0$, define the {\em $m$-th Reinhardt power} of $\Omega$ to be the Reinhardt domain
\begin{equation}\label{E:ReinhardtPower}
\Omega^{(m)} = \left\{ z\in \C^n : \left(\abs{z_1}^{\frac{1}{m}}, \dots, \abs{z_n}^{\frac{1}{m}}\right) \in \Omega\right\}.
\end{equation}
If $\Omega$ is pseudoconvex, then for each $m>0$ the domain $\Omega^{(m)}$ is  pseudoconvex. 
Indeed, recall the logarithmic shadow of $\Omega$, the subset $\log(\Omega)$ of $\rl^n$ given by 
\begin{equation}\label{eq-logshadow}
\log(\Omega )= \{\left(\log\abs{z_1},\dots, \log \abs{z_n}\right): z\in \Omega\}.
\end{equation}
Recall also that  $\Omega$ is pseudoconvex if and only if the set $\log(\Omega)$ is convex, and $\Omega$ is ``weakly relatively complete" (\cite[Theorem 1.11.13 and Proposition~1.11.6]{JarPflBook08}). 
It is easily seen that the condition of weak relative completeness is preserved by the construction of Reinhardt powers, and
\[ 
\log\left(\Omega^{(m)}\right) = \{(m\log\abs{z_1},\dots, m\log\abs{z_n}): z\in \Omega\} = m\log(\Omega)
\]
is itself convex, if $\log(\Omega)$ is convex. 
So $\Omega^{(m)}$ is pseudoconvex if and only if $\Omega$ is pseudoconvex.

The second result needed in the proof of Theorem~\ref{thm-MBK1} is the following:
\begin{lemma}\label{lem-series}
Let $A$ be a Banach space of holomorphic functions on $\Omega$ and suppose that for each $z\in \Omega$ the evaluation functional $\phi_z : A \to \cx$ given by $\phi_z(f)=f(z)$ for $f\in A$ is continuous. 
Then for  $m>0$, the following series converges locally normally on $\Omega^{(m)}$:
\begin{equation*}
\sum_{\substack{\alpha\in \Z^n\\ e_\alpha\in A} } \frac{e_\alpha}{\norm{e_\alpha}_A^m}.
\end{equation*}
\end{lemma}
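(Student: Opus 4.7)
The plan is to combine Abel's lemma for Laurent series (Lemma~\ref{lem-abel}) with the continuity of point evaluations on $A$, using the defining relation $|e_\alpha(w)|=|e_\alpha(z)|^m$ that connects $\Omega^{(m)}$ to $\Omega$. Concretely, fix any point $w\in\Omega^{(m)}$, and set $z=(|w_1|^{1/m},\ldots,|w_n|^{1/m})\in\Omega$; this point lies in $\Omega$ by the very definition \eqref{E:ReinhardtPower}. For every $\alpha\in\Z^n$ with $e_\alpha\in A$ one has the algebraic identity
\[
\abs{e_\alpha(w)}=\prod_{j=1}^n|w_j|^{\alpha_j}=\prod_{j=1}^n(|w_j|^{1/m})^{m\alpha_j}=\abs{e_\alpha(z)}^m.
\]

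Next I would invoke the hypothesis that the evaluation functional $\phi_z:A\to\C$ is bounded: $|e_\alpha(z)|=|\phi_z(e_\alpha)|\le\norm{\phi_z}_{A'}\norm{e_\alpha}_A$. Raising to the $m$-th power and substituting the identity above yields
\[
\left|\frac{e_\alpha(w)}{\norm{e_\alpha}_A^m}\right|\le \norm{\phi_z}_{A'}^m,
\]
which is a bound independent of $\alpha$ (and depending only on $w$ through $z$). This is exactly the hypothesis needed to feed the series $\sum_{\alpha:e_\alpha\in A}\frac{e_\alpha}{\norm{e_\alpha}_A^m}$ into Lemma~\ref{lem-abel} on the Reinhardt domain $\Omega^{(m)}$.

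Before applying Abel's lemma I should check the minor compatibility point that each $e_\alpha$ appearing in the series is actually holomorphic on $\Omega^{(m)}$: since $w\in\Omega^{(m)}$ has $w_j=0$ only if $(\ldots,0,\ldots)\in\Omega$, the set of coordinate hyperplanes meeting $\Omega^{(m)}$ coincides with that meeting $\Omega$, so holomorphy of $e_\alpha$ on $\Omega$ transfers to $\Omega^{(m)}$. With that noted, Lemma~\ref{lem-abel} immediately gives locally normal convergence of $\sum_\alpha e_\alpha/\norm{e_\alpha}_A^m$ on $\Omega^{(m)}$, as claimed.

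There isn't really a hard step here; the proof is mostly a matter of correctly identifying the geometric bridge $|e_\alpha(w)|=|e_\alpha(z)|^m$ between $\Omega$ and $\Omega^{(m)}$, and of recognizing that continuity of evaluation on $A$ is exactly the ingredient that converts a norm bound on $e_\alpha$ in $A$ into a pointwise bound on $e_\alpha(z)$ in $\Omega$. The only place where I would be careful is to make sure that the point $z$ constructed above does lie in $\Omega$ itself (not merely in its closure), which is guaranteed because $w\in\Omega^{(m)}$ is an open condition via \eqref{E:ReinhardtPower}.
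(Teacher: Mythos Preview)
Your proof is correct and follows essentially the same approach as the paper: pick a point in $\Omega^{(m)}$, use the defining relation to produce a corresponding point in $\Omega$, apply continuity of the evaluation functional there to get a uniform bound on the terms, and invoke Lemma~\ref{lem-abel}. The only difference is cosmetic (variable names) and that you add the explicit verification that $\mathcal{S}(\Omega)=\mathcal{S}(\Omega^{(m)})$, which the paper leaves implicit.
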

\begin{proof}
Let $z\in \Omega^{(m)}$ so that there is $\zeta\in \Omega$ such that $\abs{z_j}=\abs{\zeta_j}^m$ for each $j$. 
If $\phi_\zeta: A \to \cx$ is the evaluation functional, there is a constant $C>0$ such that $\abs{\phi_\zeta(f)}\leq C \norm{f}_A$ for each $f\in A$. 
Then for each $\alpha\in \Z^n$ such that $e_\alpha\in A$ we have
\begin{equation*}
\frac{\abs{e_\alpha(z)}}{\norm{e_\alpha}_A^m}= \left(\frac{\abs{e_\alpha(\zeta)}}{\norm{e_\alpha}_A}\right)^m= \left(\frac{\phi_\zeta(e_\alpha)}{\norm{e_\alpha}_A}\right)^m \leq C^m.
\end{equation*}
The result now follows by Lemma~\ref{lem-abel}.
\end{proof}

\begin{proof}[Proof of Theorem~\ref{thm-MBK1}]
Let $t_j=z_j\ol{w_j}\abs{w_j}^{p-2}$, $1\le j\le n$, and $t=(t_1,\dots,t_n)$. Then the series for the MBK given in \eqref{E:MBK-alt-form} assumes the form
\begin{equation}\label{E:tseries-MBK}
K^\Omega_{p,\lambda}(z,w)=\sum_{\alpha \in \mathcal{S}_p(\Omega,\lambda)} \frac{t^\alpha}{\norm{e_\alpha}_{p,\lambda}^p}.
\end{equation}
Since Bergman's inequality \eqref{eq-bergmanineq} holds for admissible weights by definition, point evaluations are bounded on $A^p(\Omega,\lambda)$. 
Lemma~\ref{lem-series} therefore guarantees the series in $\eqref{E:tseries-MBK}$ above converges locally  normally on $\Omega^{(p)}$  defined in \eqref{E:ReinhardtPower}. 
It thus suffices to show that the image of the map $\Omega \times \Omega \to \cx^n$ given by
\begin{equation*}
(z,w) \longmapsto (t_1,\dots, t_n)
\end{equation*}
coincides with $\Omega^{(p)}$, since then the image of a compact set $K\subset \Omega\times \Omega$ is a compact subset of $\Omega^{(p)}$, on which the series \eqref{E:tseries-MBK} is known to converge normally. 

Now consider the logarithmic shadow $\log(\Omega \times \Omega) = \log(\Omega)\times\log(\Omega)$ defined in \eqref{eq-logshadow}.
Due to the log-convexity of pseudoconvex Reinhardt domains, what we want to prove is equivalent to saying that the map from $\log(\Omega)\times \log(\Omega)\to \rl^n$ given by
\begin{equation}\label{eq-map}
(\xi, \eta) \longmapsto \xi + (p-1)\eta
\end{equation}
has image exactly $p\log(\Omega) = \{p\theta: \theta \in \log(\Omega)\} = \log\left(\Omega^{(p)}\right)$.  But since $\log(\Omega)$ is convex, the map on $\log(\Omega) \times \log(\Omega)$ given by
\begin{equation*}
(\xi, \eta) \longmapsto \tfrac{1}{p}\xi + \big(1-\tfrac{1}{p}\big)\eta 
\end{equation*}
has image contained in $\log(\Omega)$. 
Taking $\xi=\eta$ we see that the image is exactly $\log(\Omega)$. 
Therefore the image of \eqref{eq-map} is precisely $p\log(\Omega)$ and  we have proved that the series \eqref{E:MBK-alt-form} converges locally normally on $\Omega\times\Omega$.
\end{proof}

\subsection{More general admissible weights}\label{SS:GenAdmWeights}
  
Continuous positive functions $\lambda$ are always admissible weights in the sense of Section~\ref{sec-notation}, item~\eqref{item-admissible}. 
In Sections \ref{sec-onedim}, \ref{sec-monomialtransformation}, \ref{S:MonomialPolyhedra} and \ref{S:Duality} below, we encounter more general multi-radial weights which vanish or blow up along the axes.
Let $Z\subset\cx^n$ denote the union of the coordinate hyperplanes
\[ 
Z=\{z\in \cx^n:  z_j=0 \text{ for some  } 1\leq j \leq n\}.
\]
\begin{proposition}\label{prop-admissible}
    Let $U$ be a  domain in $\cx^n$ and let $U^*=U\setminus Z$. Suppose that $\lambda:U\to [0,\infty]$
    is a measurable function on $U$ such that the restriction $\lambda|_{U^*}$ is an admissible weight on $U^*$. Then 
    $\lambda$ is an admissible weight on $U$.
\end{proposition}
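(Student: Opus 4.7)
The plan is to reduce the Bergman inequality for $A^p(U,\lambda)$ on a general compact $K\subset U$ to the same inequality on a compact subset $K'\subset U^*$, where the hypothesis applies directly. The main tool is the mean value property on polytori, which naturally avoids the coordinate hyperplane set $Z$ even when $K$ meets $Z$.

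First, fix a compact $K\subset U$. Since $U$ is open and $K$ is compact, a standard compactness argument yields an $r>0$ such that for every $z\in K$ the closed polydisc $\overline{D^n(z,r)}:=\prod_{j=1}^n\{w_j\in\C:|w_j-z_j|\le r\}$ is contained in $U$. Define
\[
K' \;=\; \bigcup_{z\in K}\, T(z,r), \qquad T(z,r)=\{w\in\C^n:|w_j-z_j|=r\text{ for all }1\le j\le n\}.
\]
Then $K'$ is compact, being the continuous image of $K\times[0,2\pi]^n$ under $(z,\theta)\mapsto(z_1+re^{i\theta_1},\ldots,z_n+re^{i\theta_n})$. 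The crucial observation is that $K'\subset U^*$: any $w\in T(z,r)$ has $|w_j|\ge r-|z_j|$ cannot be concluded directly, but $|w_j-z_j|=r>0$ forces $w_j\ne z_j$; more to the point, each coordinate satisfies $|w_j|\ge$ well, the simple fact is that if $z_j=0$ then $|w_j|=r>0$, and if $z_j\ne0$ then by continuity we can shrink $r$ once and for all so that no $w_j$ in $T(z,r)$ vanishes for $z\in K$. (A uniform lower bound on $|z_j|$ over the part of $K$ on which $z_j\ne 0$ is not needed — we may simply ensure $r$ is less than the distance from $K\setminus\{z_j=0\}$ to $\{z_j=0\}$ along each axis, or more simply replace $K'$ with its intersection with $U^*$ after a further routine adjustment.)

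Second, for any $f\in A^p(U,\lambda)\subset\co(U)$, the mean value property on the polytorus (a consequence of the iterated Cauchy integral formula on the polydisc $\overline{D^n(z,r)}\subset U$) gives
\[
f(z) \;=\; \frac{1}{(2\pi)^n}\int_{[0,2\pi]^n} f\bigl(z_1+re^{i\theta_1},\ldots,z_n+re^{i\theta_n}\bigr)\,d\theta_1\cdots d\theta_n, \qquad z\in K,
\]
so that $|f(z)|\le\sup_{T(z,r)}|f|\le\sup_{K'}|f|$. Taking supremum over $z\in K$ yields $\sup_K|f|\le\sup_{K'}|f|$.

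Third, the restriction $f|_{U^*}$ lies in $A^p(U^*,\lambda|_{U^*})$ with $\|f|_{U^*}\|_{L^p(U^*,\lambda|_{U^*})}=\|f\|_{L^p(U,\lambda)}$, since $Z$ has Lebesgue measure zero. The hypothesis that $\lambda|_{U^*}$ is admissible on $U^*$ then provides a constant $C_{K'}>0$ with
\[
\sup_{K'}|f| \;\le\; C_{K'}\,\bigl\|f|_{U^*}\bigr\|_{L^p(U^*,\lambda|_{U^*})} \;=\; C_{K'}\,\|f\|_{L^p(U,\lambda)}.
\]
Chaining the two inequalities proves \eqref{eq-bergmanineq} for $\lambda$ on $U$ with constant $C_K=C_{K'}$.

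The only delicate point — and the one I expect to require the most care in writing — is the verification that $K'$ (or a small variant of it) really sits inside $U^*$ uniformly in $z\in K$; the rest is routine multivariable complex analysis. This is easily handled because the polytorus $T(z,r)$ has $|w_j-z_j|=r>0$ for each $j$, so once $r$ is chosen sufficiently small relative to the distance from $K$ to $\partial U$ and to the (already positive) minima of $|z_j|$ on the compact subsets $K\cap\{z_j\ne 0\}$, each $w\in T(z,r)$ has all coordinates nonzero.
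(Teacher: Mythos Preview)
Your overall strategy matches the paper's: bound $|f|$ on $K$ by $\sup|f|$ on a compact subset of $U^*$ via the Cauchy integral on polytori, then invoke admissibility on $U^*$. The gap is in the claim that a \emph{single} radius $r>0$ can be chosen so that $K'=\bigcup_{z\in K}T(z,r)\subset U^*$. For $T(z,r)$ to miss $Z$ one needs $|z_j|\ne r$ for every $j$ and every $z\in K$; but the set $\{|z_j|:z\in K\}$ is a compact subset of $[0,\infty)$ that can contain an entire interval $[0,\epsilon]$. For instance, take $n=2$ and $K=\{(t,0):0\le t\le 1\}$ inside a suitable $U$: for any $r\in(0,1]$ the point $z=(r,0)\in K$ has $T(z,r)\ni(0,re^{i\theta})\in Z$. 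Your attempted justification appeals to ``the (already positive) minima of $|z_j|$ on the compact subsets $K\cap\{z_j\ne 0\}$'', but $K\cap\{z_j\ne 0\}$ is relatively open in $K$, not compact, and $|z_j|$ need not be bounded below by a positive number there.

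The paper sidesteps this by localizing: it fixes a single center $\zeta\in U\cap Z$, chooses $r$ with $r\ne|\zeta_j|$ for the finitely many values $|\zeta_1|,\dots,|\zeta_n|$, so that the \emph{one} torus $T=T(\zeta,r)$ lies in $U^*$, and then uses the Cauchy integral over $T$ to bound $|f|$ on any compact neighborhood of $\zeta$ contained in the polydisc centered at $\zeta$. A finite cover of $K$ by such neighborhoods (together with compacta already in $U^*$) then yields the global estimate. Your argument is easily repaired along these lines: let the radius depend on the center, and pass to a finite subcover.
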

\begin{proof} Assume that  $U\cap Z\not=\varnothing$, since otherwise there is nothing to show,  and set $\lambda^*=\lambda|_{U^*}$. If $f\in A^p(U, \lambda)$, then since $\lambda^*$ is admissible on
$U^*$, if a compact $K$ is contained in $U^*$, there exists a $C_K>0$ such that 
\[
\sup_K \abs{f}\leq C_K \norm{f}_{A^p(U^*, \lambda^*)}=C_K \norm{f}_{A^p(U, \lambda)}.
\]
To complete the proof, we need to show that for each $\zeta\in U\cap Z$, there is a compact neighborhood $K$ of 
$\zeta$ in $U$ such that \eqref{eq-bergmanineq} holds for each $f\in A^p(U, \lambda)$. 
Now, there is a polydisc $P$ centered at $\zeta$ given by
$P=\{z\in \cx^n: \abs{z_j-\zeta_j}< r,\, 1 \leq j \leq n\}$
such that the closure $\ol{P}$ is contained in $U$. We can assume further  that the radius $r>0$ is chosen so that it is distinct from each of the nonnegative numbers $\abs{\zeta_j},\, 1\leq j \leq n$. Then the ``distinguished boundary"
\[
T=\{z\in \cx^n: \abs{z_j-\zeta_j} =r,\, 1\leq j \leq n \}
\]
of this polydisc satisfies the condition that $T\subset U^*$. 
Therefore for each $f\in \mathcal{O}(U)$ and each $w\in P$, we have the Cauchy representation:
\begin{equation}\label{eq-cauchy}
    f(w)= \frac{1}{(2\pi i)^n}\int_T \frac{f(z_1,\dots,z_n)}{(z_1-w_1)\dots (z_n-w_n)} \, dz_1\dots dz_n
\end{equation}    
where the integral is an $n$-times repeated contour integral on $T$. 
Now suppose that $K$ is a compact subset of $P$ containing the center $\zeta$, and let $\rho>0$ be such that $\abs{z_j-w_j}\geq \rho$ for each $z\in T$ and  
$w\in K$. 
Then for $w\in K$, a sup-norm estimate on \eqref{eq-cauchy} gives
\[
\abs{f(w)} \leq \frac{1}{(2\pi)^n}\cdot \frac{\sup_T\abs{f}}{\rho^n}(2\pi r)^n\leq \left(\frac{r}{\rho} \right)^n \cdot \norm{f}_{A^p(U^*,\lambda^*)} = \left(\frac{r}{\rho} \right)^n \cdot \norm{f}_{A^p(U,\lambda)}
\]
where we used the fact that $\lambda^*$ is admissible on $U^*$. The result follows.
\end{proof}


\subsection{Integral representation of the Monomial Basis Projection}\label{S:Integral-rep-MBP}
\begin{theorem}\label{thm-mbpmbk}
If the Monomial Basis Projection $\bm{P}_{p,\lambda}^\Omega:L^p(\Omega, \lambda)\to A^p(\Omega, \lambda)$ exists, then
\begin{equation}\label{E:def-of-MBP}
\bm{P}_{p,\lambda}^\Omega(f)(z) = \int_\Omega K_{p,\lambda}^\Omega(z,w)f(w)  \lambda(w) dV(w), \qquad f \in L^p(\Omega,\lambda),
\end{equation}
and for each $z\in\Omega$, we have $K_{p,\lambda}^\Omega(z, \cdot)\in L^q(\Omega, \lambda)$.
\end{theorem}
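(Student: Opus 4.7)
The plan is to identify the Hahn-Banach extension $\wt a_\alpha$ explicitly as integration against a specific $L^q$-element, then use Banach--Steinhaus and weak compactness in $L^q(\Omega,\lambda)$ to produce the kernel, and finally match this weak limit with the formal MBK via a torus-Fourier decomposition. Setting $g_\alpha := \chi_p^* e_\alpha/\norm{e_\alpha}_{p,\lambda}^p$, the formula \eqref{eq-chistaralpha} together with the Hölder identity $(p-1)q = p$ gives $g_\alpha \in L^q(\Omega,\lambda)$ with $\norm{g_\alpha}_{q,\lambda} = 1/\norm{e_\alpha}_{p,\lambda}$. The multi-radiality of $\lambda$ and the angular orthogonality of distinct monomials yield $\int_\Omega e_\beta\,\ol{g_\alpha}\,\lambda\,dV = \delta_{\alpha\beta}$ for $\alpha,\beta\in\cs_p(\Omega,\lambda)$, so the continuous functional $L_\alpha(f) := \int_\Omega f\,\ol{g_\alpha}\,\lambda\,dV$ on $L^p(\Omega,\lambda)$ restricts to $a_\alpha$ on $A^p(\Omega,\lambda)$ (by continuity and Theorem~\ref{T:monomials-form-a-Schauder-basis}) and has norm $1/\norm{e_\alpha}_{p,\lambda} = \norm{a_\alpha}_{A^p(\Omega,\lambda)'}$ by \eqref{eq-aalphanorm}. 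Propositions~\ref{prop-taylor} and~\ref{prop-lpstrictconvex} then force $\wt a_\alpha = L_\alpha$.

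Next I would set $K_N(z,w) := \sum_{|\alpha|_\infty\leq N,\,\alpha\in\cs_p} e_\alpha(z)\,\ol{g_\alpha(w)}$, the square partial sum of \eqref{E:MBK-alt-form}, which lies in $L^q(\Omega,\lambda)$ as a finite combination of the $\ol{g_\alpha}$'s. Fubini and the explicit form of $\wt a_\alpha$ give
\[
\int_\Omega K_N(z,w)\,f(w)\,\lambda(w)\,dV(w) = \sum_{\substack{|\alpha|_\infty\leq N\\\alpha\in\cs_p}}\wt a_\alpha(f)\,e_\alpha(z),
\]
which by hypothesis converges in $L^p(\Omega,\lambda)$ to $\bm P^\Omega_{p,\lambda}f$, and hence pointwise to $(\bm P^\Omega_{p,\lambda}f)(z)$ by Bergman's inequality \eqref{eq-bergmanineq} applied to the holomorphic differences. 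For each fixed $z$, the family $f \mapsto \int_\Omega K_N(z,w)f(w)\lambda(w)\,dV(w)$ of functionals on $L^p(\Omega,\lambda)$ is therefore pointwise convergent, so Banach--Steinhaus yields $\sup_N\norm{K_N(z,\cdot)}_{q,\lambda}<\infty$. Reflexivity of $L^q$ then provides a weakly convergent subsequence $K_{N_k}(z,\cdot)\rightharpoonup h_z$ in $L^q(\Omega,\lambda)$ satisfying $(\bm P^\Omega_{p,\lambda}f)(z) = \int_\Omega h_z(w)\,f(w)\,\lambda(w)\,dV(w)$.

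The main obstacle is to identify $h_z$ with the formally-defined kernel $K^\Omega_{p,\lambda}(z,\cdot)$, since partial sums need not converge in $L^q$-norm (unlike the Bessel-type $p=2$ case). To resolve this I would exploit the torus action on $\Omega$: for $\beta\in\Z^n$, the Fourier isotypic projector
\[
\cf_\beta(h)(w) := \int_{\T^n} h(e^{i\theta}w)\,e^{-i\beta\cdot\theta}\,\frac{d\theta}{(2\pi)^n}
\]
is bounded on $L^q(\Omega,\lambda)$ by Minkowski's integral inequality together with the multi-radial invariance of $\lambda$, and hence weakly continuous. A direct calculation gives $\cf_{-\alpha}(K_N(z,\cdot)) = e_\alpha(z)\,\ol{g_\alpha}$ when $\alpha\in\cs_p$ and $|\alpha|_\infty\leq N$, and $0$ otherwise; passing to the weak limit matches each Fourier component of $h_z$ with the corresponding term of \eqref{E:MBK-alt-form}, giving $h_z = K^\Omega_{p,\lambda}(z,\cdot)\in L^q(\Omega,\lambda)$ and completing the proof.
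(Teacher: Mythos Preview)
Your argument is correct and reaches the same conclusion as the paper, but by a genuinely different route. After the common first step of identifying $\wt a_\alpha$ with integration against $g_\alpha$ (this is Proposition~\ref{prop-hbext} in the paper), the paper uses the local normal convergence of the MBK series (Theorem~\ref{thm-MBK1}) \emph{up front}: it defines the integral operator $\bm Q$ of \eqref{eq-qdef} on $C_c(\Omega)$, checks that $\bm Q f$ has Laurent series $\sum\wt a_\alpha(f)e_\alpha$, concludes $\bm Q=\bm P^\Omega_{p,\lambda}$ on $C_c$, reads off $K(z,\cdot)\in L^q$ from boundedness of $g\mapsto\bm Qg(z)$, and then passes from $C_c$ to $L^p$ by approximation. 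Your approach instead produces the $L^q$-kernel abstractly via Banach--Steinhaus and weak compactness, and only afterwards identifies the weak limit $h_z$ with the concrete series $K^\Omega_{p,\lambda}(z,\cdot)$.

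Two small remarks on your execution. First, no subsequence is needed: since $\int K_N(z,\cdot)f\lambda\,dV\to(\bm P f)(z)$ for every $f\in L^p$, the full sequence $K_N(z,\cdot)$ already converges weak-$*$ (hence weakly) in $L^q$ to some $h_z$. Second, the final identification $h_z=K^\Omega_{p,\lambda}(z,\cdot)$ still implicitly relies on Theorem~\ref{thm-MBK1}, because $K^\Omega_{p,\lambda}(z,\cdot)$ is \emph{defined} as the pointwise sum of the series; without that convergence there is no function to compare $h_z$ against. Once Theorem~\ref{thm-MBK1} is invoked, your torus-Fourier argument works, but it is more than you need: since $K_N(z,\cdot)\to K^\Omega_{p,\lambda}(z,\cdot)$ locally uniformly and $K_N(z,\cdot)\rightharpoonup h_z$ in $L^q$, testing against $\phi\in C_c(\Omega)$ immediately gives $\int K^\Omega_{p,\lambda}(z,\cdot)\phi\lambda\,dV=\int h_z\phi\lambda\,dV$, hence $h_z=K^\Omega_{p,\lambda}(z,\cdot)$ a.e. Your Fourier-isotypic projectors are a pleasant alternative, but the simpler density argument is what the paper's method effectively exploits.
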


When $p = 2$, this is simply the representation of the Bergman projection $\bm{B}_\lambda^\Omega$ of $A^2(\Omega,\lambda)$ by its Bergman kernel.
But the existence of the MBP of $A^p(\Omega,\lambda)$ for $p \neq 2$ is not guaranteed by abstract Hilbert-space theory. 
We note a related consequence of Theorem \ref{thm-mbpmbk}, which should be contrasted with Proposition~\ref{prop-szego}:

\begin{corollary}\label{cor-bergmannotbasis}
Suppose the Bergman projection $\bm{B}_{\lambda}^\Omega: L^2(\Omega,\lambda)\to A^2(\Omega, \lambda)$ extends by continuity to a bounded operator $\bm{B}_{\lambda}^\Omega: L^p(\Omega, \lambda)\to A^p(\Omega, \lambda)$, $p \neq 2$.
The extension is not the basis projection determined by the monomial  basis $\{e_\alpha:\alpha \in \cs_p(\Omega,\lambda)\}$.
\end{corollary}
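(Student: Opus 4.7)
My plan is to argue by contradiction, exploiting the fact that a basis projection is determined by \emph{norm-preserving} Hahn-Banach extensions of its coefficient functionals (Proposition~\ref{prop-taylor}) together with the explicit norm formula \eqref{eq-aalphanorm}. Suppose the continuous extension $\bm{B}^\Omega_\lambda:L^p(\Omega,\lambda) \to A^p(\Omega,\lambda)$ were the basis projection determined by the monomial basis $\{e_\alpha : \alpha \in \cs_p(\Omega,\lambda)\}$. Unpacking Definition~\ref{D:basis-projection}, this would mean that for every $\alpha \in \cs_p(\Omega,\lambda)$ the bounded linear functional $a_\alpha \circ \bm{B}^\Omega_\lambda : L^p(\Omega,\lambda)\to \C$ coincides with $\tilde{a}_\alpha$, the unique norm-preserving extension to $L^p(\Omega,\lambda)$ of $a_\alpha:A^p(\Omega,\lambda) \to \C$, and so by Theorem~\ref{T:monomials-form-a-Schauder-basis} the common norm would be forced to equal $1/\|e_\alpha\|_{p,\lambda}$.

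I would then compute $\|a_\alpha \circ \bm{B}^\Omega_\lambda\|_{L^p(\Omega,\lambda)'}$ independently. On the dense subspace $L^p(\Omega,\lambda)\cap L^2(\Omega,\lambda)$, the extended operator agrees with the $L^2$-Bergman projection, so for $\alpha\in \cs_2(\Omega,\lambda)$ the reproducing property yields
\[
a_\alpha\bigl(\bm{B}^\Omega_\lambda f\bigr) \;=\; \frac{1}{\|e_\alpha\|_{2,\lambda}^2}\int_\Omega f(w)\,\overline{e_\alpha(w)}\,\lambda(w)\, dV(w).
\]
Continuity of $a_\alpha \circ \bm{B}^\Omega_\lambda$ on $L^p(\Omega,\lambda)$ together with the standard identification $L^p(\Omega,\lambda)'\cong L^q(\Omega,\lambda)$ then forces $e_\alpha \in L^q(\Omega,\lambda)$, and gives $\|a_\alpha \circ \bm{B}^\Omega_\lambda\|_{L^p(\Omega,\lambda)'} = \|e_\alpha\|_{q,\lambda}/\|e_\alpha\|_{2,\lambda}^2$.

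Equating the two expressions for the norm produces the identity $\|e_\alpha\|_{p,\lambda}\cdot \|e_\alpha\|_{q,\lambda} = \|e_\alpha\|_{2,\lambda}^2$. But H\"older's inequality applied to the trivial factorization $|e_\alpha|^2 = |e_\alpha|\cdot|e_\alpha|$ with conjugate exponents $p$ and $q$ always yields the reverse inequality
\[
\|e_\alpha\|_{2,\lambda}^2 \;\leq\; \|e_\alpha\|_{p,\lambda}\cdot \|e_\alpha\|_{q,\lambda},
\]
with equality iff $|e_\alpha|^p$ and $|e_\alpha|^q$ are proportional on $\{\lambda>0\}$; for $p\neq 2$ this forces $|e_\alpha|$ to be essentially constant on the support of $\lambda$, and hence $\alpha=0$ on a Reinhardt domain. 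A single nonzero $\alpha \in \cs_p(\Omega,\lambda)\cap\cs_2(\Omega,\lambda)\cap\cs_q(\Omega,\lambda)$ therefore yields the desired contradiction, and any nonzero $\alpha \in \N^n$ will do since the corresponding holomorphic monomial is bounded on $\Omega$ and hence lies in every weighted $L^r$ space once $\int_\Omega \lambda\, dV < \infty$.

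The main delicate point is the norm identification in the second paragraph, which relies on the density of $L^p(\Omega,\lambda)\cap L^2(\Omega,\lambda)$ in $L^p(\Omega,\lambda)$ and on passing from a dense-subset formula to the Riesz representative in $L^q(\Omega,\lambda)$. Once this is in place, the H\"older step cleanly encapsulates the underlying obstruction: the $L^2$-reproducing coefficient $1/\|e_\alpha\|_{2,\lambda}^2$ can only be norm-preserving on $L^p(\Omega,\lambda)$ precisely when $p=2$, and so any $L^p$-extension of the Bergman projection is forced to produce coefficient functionals strictly larger in norm than those of the basis projection.
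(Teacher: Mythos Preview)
Your argument is correct and takes a genuinely different route from the paper. The paper's proof is a single line: having just established Theorem~\ref{thm-mbpmbk}, it observes that if the extended Bergman projection were the basis projection (i.e.\ the MBP), it would have to be represented by the Monomial Basis Kernel $K^\Omega_{p,\lambda}$, whereas on $L^p\cap L^2$ it is already represented by the Bergman kernel $K^\Omega_{2,\lambda}$, and these two kernels are visibly distinct for $p\neq 2$ (for instance, $w\mapsto K^\Omega_{p,\lambda}(z,w)$ is not anti-holomorphic because of the factors $\abs{e_\alpha(w)}^{p-2}$). Your approach bypasses the integral-kernel representation entirely and works at the level of a single coefficient functional: you identify the Riesz representative of $a_\alpha\circ\bm{B}^\Omega_\lambda$ as $e_\alpha/\norm{e_\alpha}_{2,\lambda}^2$, compute its $L^q$-norm, and contrast this with the norm $1/\norm{e_\alpha}_{p,\lambda}$ forced by the Hahn-Banach property of $\wt a_\alpha$. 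The resulting identity $\norm{e_\alpha}_{p,\lambda}\norm{e_\alpha}_{q,\lambda}=\norm{e_\alpha}_{2,\lambda}^2$ is exactly the equality case of H\"older, impossible for nonconstant $\abs{e_\alpha}$ when $p\neq 2$. This is more elementary (it does not invoke Theorem~\ref{thm-mbpmbk}) and more quantitative, in that it isolates the H\"older gap as the precise obstruction; the paper's version, on the other hand, is immediate once the kernel machinery is in hand. One small caveat: your final sentence assumes $\int_\Omega \lambda\,dV<\infty$ to exhibit a nonzero $\alpha\in\cs_p\cap\cs_2$. This is not among the standing hypotheses, though it holds in all the paper's examples; you can avoid it by noting that any $\alpha\in\cs_p\setminus\cs_2$ already gives $a_\alpha\circ\bm{B}^\Omega_\lambda\equiv 0\neq \wt a_\alpha$, reducing to $\cs_p\subset\cs_2$, where any nonzero $\alpha\in\cs_p$ suffices.
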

\begin{proof}
This is immediate, since the Bergman kernel is distinct from the MBK for $p \neq 2$.
\end{proof}
By Proposition~\ref{prop-lpstrictconvex}, the dual space of $L^p(\Omega, \lambda)$ is strictly convex. 
Proposition \ref{prop-taylor} thus guarantees that each coefficient functional in the set $\{a_\alpha: \alpha \in \mathcal{S}_p(\Omega,\lambda)\}$ dual to the monomial  basis $\{e_\alpha: \alpha \in \mathcal{S}_p(\Omega,\lambda)\}$ has a unique Hahn-Banach extension to a functional $\wt{a}_\alpha:L^p(\Omega, \lambda)\to \C$. 
We now identify this extension:

\begin{proposition}\label{prop-hbext}
For $\alpha \in \mathcal{S}_p(\Omega,\lambda)$, let $g_\alpha$ be the function defined on $\Omega$ by
\begin{equation}\label{eq-galpha}
g_\alpha =\frac{\chi_p^*e_\alpha}{\norm{e_\alpha}_{p,\lambda}^p} = \frac{e_\alpha\abs{e_\alpha}^{p-2}}{\norm{e_\alpha}_{p,\lambda}^p}.
\end{equation}
Then the unique Hahn-Banach extension $\wt{a}_\alpha: L^p(\Omega, \lambda)\to \cx$ of the coefficient functional $a_\alpha:A^p(\Omega, \lambda)\to \cx$ is given by
\begin{equation}\label{eq-aalphatilde}
\wt{a}_\alpha(f)= \int_\Omega f\cdot \ol{g_\alpha}\,\lambda \,dV, \qquad f\in L^p(\Omega, \lambda).
\end{equation}
\end{proposition}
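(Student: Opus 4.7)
The plan is to verify that the integral formula in \eqref{eq-aalphatilde} defines a functional on $L^p(\Omega,\lambda)$ that (i) extends $a_\alpha$ and (ii) has norm equal to $\norm{a_\alpha}_{A^p(\Omega,\lambda)'}$. The uniqueness of Hahn--Banach extensions in this setting, guaranteed by Propositions~\ref{prop-taylor} and \ref{prop-lpstrictconvex}, then forces $\wt{a}_\alpha$ to be the candidate functional.

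First I would compute $\norm{g_\alpha}_{q,\lambda}$. Since $(p-1)q = p$, a direct calculation yields
\[
\norm{g_\alpha}_{q,\lambda}^q = \frac{1}{\norm{e_\alpha}_{p,\lambda}^{pq}}\int_\Omega \abs{e_\alpha}^{(p-1)q}\lambda\,dV = \frac{\norm{e_\alpha}_{p,\lambda}^p}{\norm{e_\alpha}_{p,\lambda}^{pq}} = \frac{1}{\norm{e_\alpha}_{p,\lambda}^q},
\]
so that $\norm{g_\alpha}_{q,\lambda} = 1/\norm{e_\alpha}_{p,\lambda}$. Thus $g_\alpha \in L^q(\Omega,\lambda)$, and Hölder's inequality shows that the right-hand side of \eqref{eq-aalphatilde} defines a bounded linear functional on $L^p(\Omega,\lambda)$ with norm at most $1/\norm{e_\alpha}_{p,\lambda}$.

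Next I would check that this functional agrees with $a_\alpha$ on $A^p(\Omega,\lambda)$. Since $p$-admissible Laurent polynomials are dense by Theorem~\ref{T:monomials-form-a-Schauder-basis}, and both functionals are continuous, it suffices to test on a monomial $e_\beta$ with $\beta\in\cs_p(\Omega,\lambda)$. Writing $z_j=r_je^{i\theta_j}$ and applying Fubini -- whose integrability hypothesis $\int_\Omega \abs{e_\beta}\,\abs{e_\alpha}^{p-1}\lambda\,dV \le \norm{e_\beta}_{p,\lambda}\norm{e_\alpha}_{p,\lambda}^{p-1}$ follows from another Hölder estimate using $(p-1)q=p$ -- one integrates first in the angular variables. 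Because $\lambda$ is multi-radial, the $\theta$-integral reduces to $\int_{[0,2\pi]^n}e^{i(\beta-\alpha)\cdot\theta}\,d\theta$, which vanishes unless $\beta=\alpha$. In that case, the remaining radial integral reproduces $\norm{e_\alpha}_{p,\lambda}^p$, canceling the denominator in $g_\alpha$ to give $\int_\Omega e_\beta\,\ol{g_\alpha}\,\lambda\,dV = \delta_{\alpha\beta}=a_\alpha(e_\beta)$.

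Combining these steps, the functional defined by \eqref{eq-aalphatilde} extends $a_\alpha$ and has norm at most $1/\norm{e_\alpha}_{p,\lambda}$; \eqref{eq-aalphanorm} provides the matching lower bound $\norm{a_\alpha}_{A^p(\Omega,\lambda)'} = 1/\norm{e_\alpha}_{p,\lambda}$, so the extension is norm-preserving. Uniqueness of the Hahn--Banach extension under the strict convexity of $L^p(\Omega,\lambda)'$ then completes the identification. The main bookkeeping obstacle is the justification of Fubini's theorem, since $\alpha$ may have negative components and $\lambda$ may blow up or vanish along the coordinate hyperplanes; however, the $L^p$-integrability of $e_\alpha,e_\beta$ together with the identity $(p-1)q=p$ yields absolute integrability, so the angular/radial split proceeds cleanly.
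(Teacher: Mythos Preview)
Your proposal is correct and follows essentially the same approach as the paper: compute $\norm{g_\alpha}_{q,\lambda}=1/\norm{e_\alpha}_{p,\lambda}$, verify the functional agrees with $a_\alpha$ on monomials via the angular/radial split (justified by $e_\beta\ol{g_\alpha}\in L^1(\Omega,\lambda)$), invoke density from Theorem~\ref{T:monomials-form-a-Schauder-basis}, and conclude by uniqueness. The only cosmetic difference is that the paper asserts the norm equality $\norm{\wt{a}_\alpha}_{L^p(\Omega,\lambda)'}=\norm{g_\alpha}_{q,\lambda}$ directly from the isometric $L^p$--$L^q$ duality, whereas you obtain ``$\leq$'' from H\"older and then match it with the lower bound from \eqref{eq-aalphanorm}.
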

\begin{proof} 
First we compute the norm of $g_\alpha$ in $L^q(\Omega,\lambda)$: 

\begin{align*}
\norm{g_\alpha}_{q,\lambda}^q =\frac{1}{\norm{e_\alpha}_{p,\lambda}^{pq}}\int_\Omega \abs{e_\alpha}^{(p-1)q} \lambda\, dV
=\frac{1}{\norm{e_\alpha}_{p,\lambda}^{pq}}\norm{e_\alpha}_{p,\lambda}^p= \frac{1}{\norm{e_\alpha}_{p,\lambda}^{pq-p}}=\frac{1}{\norm{e_\alpha}_{p,\lambda}^{q}}.
\end{align*}
It follows that $g_\alpha\in L^q(\Omega, \lambda) $ and the linear functional in \eqref{eq-aalphatilde} satisfies $\wt{a}_\alpha\in L^p(\Omega, \lambda)'$ with norm given by
\begin{equation}\label{eq-normaalphatilde}
\norm{\wt{a}_\alpha}_{L^p(\Omega, \lambda)'} =\norm{g_\alpha}_{q,\lambda} = \frac{1}{\norm{e_\alpha}_{p,\lambda}}.
\end{equation}
By \eqref{eq-aalphanorm}, we have $\norm{{a_\alpha}}_{A^p(\Omega, \lambda)'} = \norm{\wt{a}_\alpha}_{L^p(\Omega, \lambda)'}$.
To complete the proof it remains to show that $\wt{a}_\alpha$ is an extension of $a_\alpha$.

By Theorem \ref{T:monomials-form-a-Schauder-basis}, the linear span of $\{e_\beta: \beta\in \mathcal{S}_p(\Omega, \lambda)\}$ is dense in  $A^p(\Omega, \lambda)$. 
Therefore we only need to show that for each $\beta\in \mathcal{S}_p(\Omega, \lambda)$, we have $\wt{a}_\alpha(e_\beta)={a_\alpha}(e_\beta)$. 
Since $\lambda$ is multi-radial, there is a function $\ell$ on the Reinhardt shadow $\abs{\Omega}$ such that  $\lambda(z)=\ell(\abs{z_1},\dots, \abs{z_n})$.
And since $g_\alpha\in L^q(\Omega, \lambda)$ and $e_\beta\in L^p(\Omega, \lambda)$, the product $e_\beta\ol{g_\alpha}\in L^1(\Omega, \lambda)$.
Fubini's theorem therefore implies
\begin{equation}\label{eq-isextn}
\int_\Omega e_\beta \ol{g_\alpha}\lambda dV= \frac{1}{\norm{e_\alpha}_{p,\lambda}^p} \int_{\abs{\Omega}} r^\beta (r^\alpha)^{p-1}\left(\int_{\mathbb{T}^n }e^{i\ipr{\beta-\alpha, \theta }}d\theta\right)r_1 r_2\dots r_n \ell dr_1\dots dr_n,
\end{equation}
where $d\theta =d\theta_1\dots d\theta_n$ is the natural volume element of the unit torus $\mathbb{T}^n$.
First suppose that $\beta\not=\alpha$, so that the integral over $\mathbb{T}^n$ on the right hand side of \eqref{eq-isextn} vanishes. 
Then we have $\int_\Omega e_\beta \ol{g_\alpha}\lambda dV =0 ={a_\alpha}(e_\beta)$.
If $\beta=\alpha$, \eqref{eq-isextn} gives
\begin{align*}
\int_\Omega e_\alpha \ol{g_\alpha}\lambda dV
= \frac{(2\pi)^n}{\norm{e_\alpha}_{p,\lambda}^p}\cdot \int_{\abs{\Omega}}  (r^\alpha)^{p}r_1 r_2\dots r_n\ell  dr_1\dots dr_n
&= \frac{1}{\norm{e_\alpha}_{p,\lambda}^p} \cdot \norm{e_\alpha}_{p,\lambda}^p= 1 = a_\alpha(e_\alpha).
\end{align*}
It follows that $\wt{a}_\alpha$ is a norm preserving extension of $a_\alpha$.  Since this extension is unique, the result follows.
\end{proof}

Observe that by combining \eqref{E:MBK-alt-form} and \eqref{eq-galpha}, the MBK of $A^p(\Omega, \lambda)$ can be written as 
\begin{equation}\label{eq-kgalpha}
K_{p,\lambda}^\Omega(z,w)=\sum_{\alpha\in \cs_p(\Omega, \lambda)}e_\alpha(z) \ol{g_\alpha(w)}. 
\end{equation}

We now establish our necessary and sufficient condition for the existence of the MBP:
\begin{proposition}\label{prop:MBP-is-a-basis-projection}
Define an integral operator on $C_c(\Omega)$ by
\begin{equation}\label{eq-qdef}
\bm{Q}f(z) = \int_\Omega  K_{p,\lambda}^\Omega(z,w) f(w)\lambda(w) dV(w), \qquad f\in C_c(\Omega).
\end{equation}
The MBP of $A^p(\Omega,\lambda)$ exists if and only if $\bm{Q}$ satisfies a weighted $L^p$-estimate, i.e., there is a constant $C>0$ such that for each $f\in C_c(\Omega)$ we have the inequality
\begin{equation}\label{eq-lpest}
\norm{\bm{Q} f}_{p,\lambda}\leq C \norm{f}_{p,\lambda}.
\end{equation}
\end{proposition}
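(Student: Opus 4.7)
The plan is to prove the two implications separately. The forward direction is nearly immediate: if the MBP exists, then by Theorem~\ref{thm-mbpmbk} it is represented on $L^p(\Omega,\lambda)$ by integration against $K^\Omega_{p,\lambda}$. Restricting this bounded operator to the subspace $C_c(\Omega) \subset L^p(\Omega,\lambda)$ gives exactly $\bm{Q}$, and the $L^p$-boundedness of $\bm{P}^\Omega_{p,\lambda}$ yields \eqref{eq-lpest} with $C = \norm{\bm{P}^\Omega_{p,\lambda}}$.

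For the reverse direction, I would first use the density of $C_c(\Omega)$ in $L^p(\Omega,\lambda)$ to extend $\bm{Q}$ to a bounded linear operator $\wt{\bm{Q}}: L^p(\Omega,\lambda) \to L^p(\Omega,\lambda)$, and then argue that $\wt{\bm{Q}}$ is the MBP by checking three things. First, that $\wt{\bm{Q}}$ takes values in $A^p(\Omega,\lambda)$: for $f \in C_c(\Omega)$, $\bm{Q}f$ is holomorphic in $z$ because the kernel $K^\Omega_{p,\lambda}(z,w)$ is given by the locally normally convergent series \eqref{eq-kgalpha} of monomials $e_\beta(z)$; for general $f \in L^p(\Omega,\lambda)$, approximate by $f_n \in C_c(\Omega)$ so that $\wt{\bm{Q}}f_n \to \wt{\bm{Q}}f$ in $L^p(\Omega,\lambda)$, whence the admissibility of $\lambda$ (Bergman's inequality \eqref{eq-bergmanineq}) forces this convergence to be locally uniform on $\Omega$, making the limit holomorphic.

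The second step is to identify the Laurent coefficients of $\wt{\bm{Q}}f$. For $f \in C_c(\Omega)$, on the compact product $\{z\} \times \mathrm{supp}(f)$ the series \eqref{eq-kgalpha} converges uniformly, and combined with local integrability of $f\lambda$ on $\mathrm{supp}(f)$ this justifies interchanging sum and integral:
\[
\bm{Q}f(z) \;=\; \int_\Omega \sum_{\beta \in \cs_p(\Omega,\lambda)} e_\beta(z)\,\overline{g_\beta(w)}\,f(w)\,\lambda(w)\,dV(w)
\;=\; \sum_{\beta \in \cs_p(\Omega,\lambda)} \wt{a}_\beta(f)\, e_\beta(z),
\]
where the last equality uses \eqref{eq-aalphatilde}. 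By uniqueness of the Laurent expansion, $a_\beta(\bm{Q}f) = \wt{a}_\beta(f)$. Extending by continuity of $a_\beta$ on $A^p(\Omega,\lambda)$ and of $\wt{a}_\beta$ on $L^p(\Omega,\lambda)$, using the $L^p$-approximation $f_n \to f$, yields $a_\alpha(\wt{\bm{Q}}f) = \wt{a}_\alpha(f)$ for every $\alpha \in \cs_p(\Omega,\lambda)$ and every $f \in L^p(\Omega,\lambda)$.

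Finally, since $\wt{\bm{Q}}f \in A^p(\Omega,\lambda)$, Theorem~\ref{T:monomials-form-a-Schauder-basis} gives the $L^p$-norm convergent expansion
\[
\wt{\bm{Q}}f \;=\; \lim_{N \to \infty} \sum_{\substack{\abs{\alpha}_\infty \le N \\ \alpha \in \cs_p(\Omega,\lambda)}} a_\alpha(\wt{\bm{Q}}f)\,e_\alpha \;=\; \lim_{N \to \infty} \sum_{\substack{\abs{\alpha}_\infty \le N \\ \alpha \in \cs_p(\Omega,\lambda)}} \wt{a}_\alpha(f)\,e_\alpha,
\]
which is exactly \eqref{E:def-of-MBP-2}. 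Surjectivity and the projection property then follow (as in Remark~\ref{R:surjectivity}) since for $f \in A^p(\Omega,\lambda)$ one has $\wt{a}_\alpha(f) = a_\alpha(f)$, so $\wt{\bm{Q}}f = f$. I expect the main technical obstacle to be the Fubini step in the second paragraph; the crux is that compact support of $f$ confines the $w$-integration to a set where the MBK series of Theorem~\ref{thm-MBK1} converges uniformly, combined with the biorthogonality $\int_\Omega e_\beta \overline{g_\alpha} \lambda\, dV = \delta_{\alpha\beta}$ established inside the proof of Proposition~\ref{prop-hbext}.
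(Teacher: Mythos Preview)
Your reverse direction is correct and essentially identical to the paper's argument: extend $\bm{Q}$ by density, show the range lies in $A^p(\Omega,\lambda)$ via Bergman's inequality, identify the Laurent coefficients $a_\alpha(\wt{\bm{Q}}f)=\wt{a}_\alpha(f)$ first on $C_c(\Omega)$ and then by continuity, and finally invoke Theorem~\ref{T:monomials-form-a-Schauder-basis}. The paper does the same, with only a cosmetic difference (it checks $\wt{\bm{Q}}e_\alpha=e_\alpha$ directly to get the projection property, whereas you deduce it from $\wt{a}_\alpha|_{A^p}=a_\alpha$).

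The forward direction, however, is circular as written. You invoke Theorem~\ref{thm-mbpmbk} to say that the existing MBP is represented by integration against $K^\Omega_{p,\lambda}$, and then restrict to $C_c(\Omega)$. But in the paper, the proof of Theorem~\ref{thm-mbpmbk} \emph{uses} Proposition~\ref{prop:MBP-is-a-basis-projection} in its first sentence (``Since the MBP exists, by Proposition~\ref{prop:MBP-is-a-basis-projection} the operator $\bm{Q}$ of \eqref{eq-qdef} satisfies $L^p$-estimates''). So you cannot appeal to it here. The paper's non-circular argument for the forward direction is to first establish, for $f\in C_c(\Omega)$, the pointwise Laurent expansion
\[
\bm{Q}f(z)=\sum_{\alpha\in\cs_p(\Omega,\lambda)}\wt{a}_\alpha(f)\,e_\alpha(z)
\]
(exactly the computation you already do in your reverse direction), and then observe that the MBP, by its \emph{definition} \eqref{E:def-of-MBP-2} as an $L^p$-convergent series, has the same Laurent coefficients; since $L^p$-convergence implies locally uniform convergence, $\bm{P}^\Omega_{p,\lambda}f=\bm{Q}f$ for $f\in C_c(\Omega)$, and the estimate \eqref{eq-lpest} follows from the assumed boundedness of $\bm{P}^\Omega_{p,\lambda}$. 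You have all the ingredients for this fix already in your proposal; you just need to reorder them and drop the reference to Theorem~\ref{thm-mbpmbk}.
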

\begin{proof} Recall that  $\Omega \subset \C^n$ is a pseudoconvex Reinhardt domain and $\lambda$ is an admissible multi-radial weight. 
The function $K_{p,\lambda}^\Omega$ is continuous on $\Omega\times \Omega$ by Theorem~\ref{thm-MBK1}, 
so the integral in \eqref{eq-qdef} exists for each $z\in \Omega$. 
Since the function $z \mapsto K_{p,\lambda}^\Omega(z, w)$ is holomorphic for each $w \in \Omega$, $\bm{Q}f$ is holomorphic for $f\in C_c(\Omega)$, for instance, by applying Morera's theorem in each variable, or equivalently, by applying $\dbar$ to both sides.

Let $f\in C_c(\Omega)$. 
Since the series for $K_{p,\lambda}^\Omega$ converges absolutely and uniformly on the compact subset $\{z\}\times \mathrm{supp}(f) \subset \Omega\times \Omega$, equation \eqref{eq-kgalpha} gives
\begin{align}
\bm{Q}f(z) &= \int_\Omega \bigg(\sum_{\alpha \in \cs_p(\Omega,\lambda)} e_\alpha(z)\ol{g_\alpha(w)}\bigg)f(w)\lambda(w)\,dV(w) \notag \\
&=\sum_{\alpha \in \cs_p(\Omega,\lambda)} \left( \int_\Omega f(w)  \ol{g_\alpha(w)}\lambda(w)\,dV(w)\right)e_\alpha(z)
=\sum_{\alpha \in \cs_p(\Omega,\lambda)} \wt{a}_\alpha(f) e_\alpha(z). \label{eq-qflaurent}
\end{align}
The series \eqref{eq-qflaurent} converges unconditionally and is the Laurent series of the holomorphic function $\bm{Q}f$. It is therefore uniformly convergent for $z$ in compact subsets of $\Omega$.

Suppose now that the MBP $\bm{P}_{p,\lambda}^\Omega:L^p(\Omega, \lambda)\to A^p(\Omega, \lambda) $ exists, which by Definition~\ref{D:def-of-MBP-1} is a bounded, surjective, linear projection given by the following limit of partial sums, convergent in $A^p(\Omega,\lambda)$:
\begin{equation}\label{eq-sqpartialsums} 
\bm{P}_{p,\lambda}^\Omega f = \lim_{N\to \infty} \sum_{\substack{\abs{\alpha}_\infty\leq N\\\alpha\in \cs_p(\Omega,\lambda) }}\wt{a}_\alpha(f) e_\alpha, \qquad f \in L^p(\Omega,\lambda).
\end{equation}
Since convergence in $A^p(\Omega,\lambda)$ implies uniform convergence on compact subsets, it follows that for $f \in C_c(\Omega)$, $\bm{Q}f = \bm{P}_{p,\lambda}^\Omega f$. 
Therefore $\bm{Q}$ satisfies $L^p$-estimates, i.e. \eqref{eq-lpest} holds.

Conversely, suppose that \eqref{eq-lpest} holds.  
Then $\bm{Q}$ can then be extended by continuity to an operator $\wt{\bm{Q}}$ on $L^p(\Omega, \lambda)$ with the same norm. 
We claim that $\wt{\bm{Q}}$ is the MBP.

If $f\in L^p(\Omega, \lambda)$, we can find a sequence $\{f_j\}\subset C_c(\Omega)$ such that $f_j\to f$ in $L^p(\Omega,\lambda)$. 
Each $\bm{Q}f_j \in A^p(\Omega, \lambda)$ and (by definition) $\bm{Q}f_j\to \wt{\bm{Q}}f$ in  $L^p(\Omega, \lambda)$. 
But this implies $\bm{Q}f_j\to \wt{\bm{Q}}f$ uniformly on compact subsets, so the limit  $\wt{\bm{Q}}f$ is holomorphic, and thus the range of $\wt{\bm{Q}}$ is contained in $A^p(\Omega, \lambda)$. 
A direct computation now shows $\wt{\bm{Q}}e_\alpha = e_\alpha$ for $\alpha \in \cs_p(\Omega,\lambda)$, and it follows that $\wt{\bm{Q}}$ is a surjective projection from $L^p(\Omega,\lambda)$ to $A^p(\Omega, \lambda)$. 

If $f\in C_c(\Omega) $, then ${\bm{Q}}f = \wt{\bm{Q}}f \in A^p(\Omega, \lambda)$ and by Theorem~\ref{T:monomials-form-a-Schauder-basis} the Laurent series expansion of $\wt{\bm{Q}}f$ given by \eqref{eq-qflaurent} converges (as a sequence of square partial sums) in $A^p(\Omega,\lambda)$:
\begin{equation}\label{eq-qrearrange}
\wt{\bm{Q}}f = \lim_{N\to \infty} \sum_{\substack{\abs{\alpha}_\infty\leq N\\\alpha\in \cs_p(\Omega,\lambda) }}\wt{a}_\alpha(f) e_\alpha. 
\end{equation}
For a general $g \in L^p(\Omega, \lambda)$, $\wt{\bm{Q}}g \in A^p(\Omega,\lambda)$ and so again by Theorem~\ref{T:monomials-form-a-Schauder-basis},
\begin{equation}\label{eq-wtbmqg} 
\wt{\bm{Q}}g = \lim_{N\to \infty} \sum_{\substack{\abs{\alpha}_\infty\leq N\\\alpha\in \cs_p(\Omega,\lambda)}} {a}_\alpha(\wt{\bm{Q}}g) e_\alpha.
\end{equation} 
It follows that on $C_c(\Omega)$ we have the identity $a_\alpha\circ{\bm{Q}}= \wt{a}_\alpha$. 
This relation extends by continuity to give  $a_\alpha\circ{\wt{\bm{Q}}}= \wt{a}_\alpha$ as functionals on $L^p(\Omega, \lambda)$. 
Then \eqref{eq-wtbmqg} becomes
\[
\wt{\bm{Q}}g = \lim_{N\to \infty} \sum_{\substack{\abs{\alpha}_\infty\leq N\\\alpha\in \cs_p(\Omega,\lambda) }}\wt{a}_\alpha(g) e_\alpha.
\]
In other words, $\wt{\bm{Q}}$ is the MBP, as we wanted to show. 
\end{proof}

\begin{proof}[Proof of Theorem~\ref{thm-mbpmbk}]
Since the MBP exists, by Proposition~\ref{prop:MBP-is-a-basis-projection} the operator $\bm{Q}$ of \eqref{eq-qdef} satisfies $L^p$-estimates. 
Then, by the continuity of point-evaluation in $A^p(\Omega,\lambda)$, for each $z \in \Omega$ the map $g \mapsto\bm{Q}g(z)$ is a bounded linear functional on $L^p(\Omega, \lambda)$. 
Formula \eqref{eq-qdef} representing this functional now shows that $K_{p,\lambda}^\Omega(z,\cdot)\in L^q(\Omega, \lambda)$.
Standard techniques of real analysis (cutting off and mollification) gives us a sequence $\{f_j\}\subset C_c(\Omega)$ such that $f_j\to f$ in $L^p(\Omega, \lambda)$. 
Therefore for each $z\in \Omega$, the sequence $\{K^\Omega_{p,\lambda}(z,\cdot)f_j(\cdot) \}\subset C_c(\Omega)$ converges in $L^1(\Omega,\lambda)$ to the limit $K^\Omega_{p,\lambda}(z,\cdot)f(\cdot)$. 
Since integration against the weight $\lambda$ is a bounded linear functional on $L^1(\Omega,\lambda)$, we obtain \eqref{E:def-of-MBP} in the limit.
\end{proof}

\section{The one dimensional case}\label{sec-onedim}
In this section we compute Monomial Basis Kernels on the unit disc $\D$ and punctured unit disc $\D^*$ -- specifically, the MBKs of the spaces $A^p(\D,\mug)$ and $A^p(\D^*,\mug)$ where $\mug(z) = |z|^\gamma$. 
From these formulas it is shown that the corresponding Monomial Basis Projections are absolutely bounded integral operators.  
We begin with a more general computation of certain subkernels that are needed in Section~\ref{S:MonomialPolyhedra}. 

\subsection{Arithmetic progression subkernels on \texorpdfstring{$\D$}{D} and \texorpdfstring{$\D^*$}{Dstar}}\label{SS:MBK-formulas}
Let $a, b \in \Z$ with $b$ positive, $U = \D$ or $\D^*$, $1<p<\infty$ and $\mug(z) = \abs{z}^\gamma$, $\gamma \in \R$.  
Consider the set of integers
\begin{equation}\label{E:arithmetic-progression_A_{a,b}}
\ca(U,p,\gamma,a,b) = \{\alpha\in \Z: \alpha\equiv a \mod b\} \cap \cs_p(U, \mug),
\end{equation}
where as usual, $\cs_p(U,\mug)\subset \Z$ is the set of $\alpha$ such that $e_\alpha \in A^p(U,\mug)$. 
Notice that $a$ is determined only modulo $b$, so we can always assume that $0\leq a \leq b-1$.
Notice also that if $b=1$ and $a=0$ we have $\ca(U,p,\gamma,0,1) =  \cs_p(U, \mug)$.
We now identify $\ca(U,p,\gamma,a,b)$ with an arithmetic progression:

\begin{proposition}\label{P:SetA-arith-prog}
Let $U,p,\gamma,a,b$ be as above. There is an integer $\theta$
such that 
\begin{equation}\label{E:SetA-alt-expression}
\ca(U,p,\gamma,a,b) = \{\theta+\nu b: \nu\geq0, \nu\in \Z\}.
\end{equation}
\end{proposition}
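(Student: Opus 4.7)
The plan is to first pin down the set $\cs_p(U,\mug)$ explicitly by a direct integration in polar coordinates, and then observe that intersecting any set of the form $\{\alpha \geq \alpha_0\}$ with an arithmetic progression produces a one-sided arithmetic progression.

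First I would compute $\norm{e_\alpha}_{p,\mug}^p$ for $\alpha\in\Z$ (or $\alpha\geq 0$ in the case $U=\D$, where holomorphicity on $\D$ forces nonnegative exponents). In polar coordinates,
\[
\norm{e_\alpha}_{p,\mug}^p = \int_U \abs{z}^{p\alpha+\gamma}\,dV(z) = 2\pi\int_0^1 r^{p\alpha+\gamma+1}\,dr,
\]
which is finite exactly when $p\alpha+\gamma>-2$, i.e., $\alpha > -(2+\gamma)/p$. Thus
\[
\cs_p(\D^*,\mug) = \{\alpha\in\Z : \alpha \geq \alpha_0\},\qquad \cs_p(\D,\mug) = \{\alpha\in\Z : \alpha \geq \max(0,\alpha_0)\},
\]
where $\alpha_0$ is the smallest integer strictly greater than $-(2+\gamma)/p$. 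In both cases $\cs_p(U,\mug)$ is a one-sided set of integers $\{\alpha\in\Z:\alpha\geq m\}$ for some $m\in\Z$.

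Next I would take the intersection of this half-line of integers with the congruence class $a+b\Z$. Write each element of $a+b\Z$ uniquely as $a+kb$ with $k\in\Z$; the condition $a+kb\geq m$ is equivalent to $k\geq \lceil (m-a)/b\rceil$, since $b>0$. Setting
\[
k_0 = \left\lceil \frac{m-a}{b}\right\rceil, \qquad \theta = a+k_0 b,
\]
and relabeling $k=k_0+\nu$ with $\nu\geq 0$ yields exactly the desired description \eqref{E:SetA-alt-expression}. There is essentially no obstacle here; the only bookkeeping point is that one must treat the $U=\D$ case slightly more carefully to make sure the lower bound incorporates the constraint $\alpha\geq 0$ coming from holomorphicity at the origin, but this is absorbed into the choice of $m$ and hence of $\theta$.
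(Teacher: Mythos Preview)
Your proof is correct and follows essentially the same approach as the paper: both compute $\norm{e_\alpha}_{p,\mug}^p$ in polar coordinates to obtain the condition $p\alpha+\gamma+2>0$, identify $\cs_p(U,\mug)$ as a half-line of integers (with the extra constraint $\alpha\geq 0$ for $U=\D$), and then take $\theta$ to be the least element of the congruence class $a+b\Z$ lying in this half-line. The paper simply declares $\theta$ to be this minimum, while you write it explicitly via a ceiling; the content is the same.
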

\begin{proof}
Let $U = \D^*$.
We claim that $\alpha \in \cs_p(\D^*,\mug)$ if and only if $p\alpha + \gamma + 2 > 0$. 
Indeed,
\begin{equation}\label{E:monomial-norm-on-disc}
\norm{e_\alpha}_{p,\mug}^p = \int_{\D^*} |z|^{p\alpha+\gamma}\,dV = 2\pi\int_0^1 r^{p\alpha+\gamma+1}\,dr = \frac{2\pi}{p\alpha + \gamma + 2},
\end{equation}
as long as $p\alpha + \gamma + 2 > 0$, otherwise the integral diverges.
Now let $\theta$ be the smallest integer such that 
(i) $\theta \equiv a \mod b$, and (ii) $p\theta+\gamma+2>0$.  Clearly 
\eqref{E:SetA-alt-expression} holds.

The case $U = \D$ is nearly identical, but the condition that $e_\alpha$
belongs to $A^p(\D, \mug)$ means that $\alpha$ must be nonnegative.
If $\theta$ is the smallest integer in the set $\cs_p(\D,\mug)$, it is determined now by three conditions: (i) $\theta \equiv a \mod b$, (ii) $p\theta+\gamma+2>0$, and (iii) $\theta \ge 0$.
\end{proof}
\begin{remark}
For $U,p,\gamma,a,b$ as above (with $0\le a \le b-1$), we can determine $\theta$ explicitly:
\begin{equation*}
\theta = 
\begin{cases}
a + b\ell, & U = \D^* \\
\max\{a + b\ell,a\}, & U = \D,
\end{cases}
\quad \text{where} \quad
\ell = \left\lfloor -\frac{\gamma+2}{pb} - \frac{a}{b} + 1 \right\rfloor.
\end{equation*}
\hfill $\lozenge$
\end{remark}

Now define for $z,w \in U$ the {\em arithmetric progression subkernel}
\begin{equation}\label{D:arith-prog-kern}
k^{U}_{p,\gamma,a,b}(z,w)
= \sum_{\alpha \in \ca(U,p,\gamma,a,b)} \frac{e_\alpha(z) \ol{\chi_p^*e_\alpha(w)}} {\norm{e_\alpha}_{p,\mug}^p} 
= \sum_{\alpha \in \ca(U,p,\gamma,a,b)} \frac{t^\alpha}{\norm{e_\alpha}_{p,\mug}^p},
\end{equation}
where $\chi_p^*$ is defined by \eqref{eq-chipstar} and $t = z \ol{w} \abs{w}^{p-2}$.   Notice that
$k^U_{p,\gamma, 0,1}$ is the MBK of $A^p(U,\mug)$.

\begin{proposition}\label{P:arith-prog-kernel}
For $z,w\in U$ and other notation as specified above, we have 
\begin{equation}\label{E:arith-prog-kern-form}
k^{U}_{p,\gamma,a,b}(z,w) = \frac{t^{\theta}}{2\pi} \cdot \frac{(p\theta + \gamma +2) - (\gamma+2+p(\theta-b))t^{b}}{(1-t^b)^2}.
\end{equation}
\end{proposition}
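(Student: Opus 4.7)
The plan is to turn the sum defining $k^U_{p,\gamma,a,b}(z,w)$ into an explicit closed form by combining two ingredients already supplied in the excerpt: the monomial norm computation from \eqref{E:monomial-norm-on-disc} and the arithmetic progression parametrization from Proposition~\ref{P:SetA-arith-prog}.

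First, I would use \eqref{E:monomial-norm-on-disc} to rewrite the denominators in the definition \eqref{D:arith-prog-kern} as $\norm{e_\alpha}_{p,\mug}^p = \frac{2\pi}{p\alpha + \gamma + 2}$, so that
\begin{equation*}
k^{U}_{p,\gamma,a,b}(z,w) = \frac{1}{2\pi}\sum_{\alpha\in \ca(U,p,\gamma,a,b)} (p\alpha+\gamma+2)\, t^\alpha.
\end{equation*}
Then I would substitute $\alpha = \theta + \nu b$ using Proposition~\ref{P:SetA-arith-prog} and factor out $t^\theta$, obtaining
\begin{equation*}
k^{U}_{p,\gamma,a,b}(z,w) = \frac{t^\theta}{2\pi}\sum_{\nu=0}^\infty \bigl(A + B\nu\bigr) s^\nu,
\end{equation*}
where $s := t^b$, $A := p\theta+\gamma+2$, and $B := pb$.

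Next, I would evaluate the scalar power series. Writing $\sum_{\nu\ge 0}(A+B\nu)s^\nu = \tfrac{A}{1-s} + \tfrac{Bs}{(1-s)^2}$ and placing everything over the common denominator $(1-s)^2$ gives
\begin{equation*}
\sum_{\nu=0}^\infty (A+B\nu)s^\nu = \frac{A - (A-B)s}{(1-s)^2} = \frac{(p\theta+\gamma+2) - (\gamma+2+p(\theta-b))\,t^b}{(1-t^b)^2},
\end{equation*}
since $A-B = p\theta+\gamma+2 - pb = \gamma+2+p(\theta-b)$. Combining this with the $\frac{t^\theta}{2\pi}$ factor yields exactly \eqref{E:arith-prog-kern-form}.

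The only convergence issue to address is that the series must actually sum to this rational expression for $z,w\in U$. This is automatic because the definition of $\theta$ in Proposition~\ref{P:SetA-arith-prog} forces $A = p\theta+\gamma+2 > 0$, ensuring $\norm{e_\theta}_{p,\mug}<\infty$, and because $\abs{t} = \abs{z}\abs{w}^{p-1} < 1$ for $z,w \in U \subset \D$, hence $\abs{s} = \abs{t}^b < 1$. Thus the geometric series manipulations are valid pointwise on $U\times U$. There is no real obstacle in this argument; it is a direct computation once the norm formula and the arithmetic structure of $\ca(U,p,\gamma,a,b)$ are in hand.
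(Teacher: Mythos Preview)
Your proof is correct and follows essentially the same approach as the paper: both use the norm formula \eqref{E:monomial-norm-on-disc} and the parametrization $\alpha=\theta+\nu b$ from Proposition~\ref{P:SetA-arith-prog}, then sum the resulting linear-coefficient geometric series (the paper splits the coefficient slightly differently but the algebra is the same). Your added convergence remark about $\abs{t}<1$ is a welcome clarification that the paper leaves implicit.
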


\begin{proof} 
The calculation in \eqref{E:monomial-norm-on-disc} shows that if $\alpha \in \cs_p(U,\mug)$, then
\begin{equation*}
\norm{e_{\alpha}}^p_{p,\mug} =  \frac{2\pi}{p\alpha + \gamma + 2}.
\end{equation*}
Now combining \eqref{D:arith-prog-kern} with Proposition \ref{P:SetA-arith-prog}, we see that
\begin{align*}
k^{U}_{p,\gamma,a,b}(z,w) 
=  \sum_{\alpha \in \ca(U,p,\gamma,a,b)} \frac{t^\alpha}{\norm{e_\alpha}_{p,\mug}^p} 
&= \frac{t^\theta}{2\pi} \sum_{\nu=0}^\infty (p(\theta + b\nu) + \gamma + 2)t^{b\nu} \\
&= \frac{t^\theta}{2\pi} \left(p\sum_{\nu=0}^\infty (b\nu+1) t^{b\nu} + (p \theta + \gamma + 2 -p)\sum_{\nu=0}^\infty t^{b\nu} \right) \notag.
\end{align*}
Writing this in closed form yields \eqref{E:arith-prog-kern-form}.
    
\end{proof}
\begin{corollary}\label{C:kernel-bound}
The arithmetic progression kernel $k^U_{p,\gamma,a,b}$ admits the bound
\begin{equation*}
\left|k^{U}_{p,\gamma,a,b}(z,w)\right| \le C \frac{(|z||w|^{p-1})^{\theta}}{|1-z^b\ol{w}^b|w|^{(p-2)b}|^2},
\end{equation*}
where $C>0$ is independent of $z,w \in U$.
\end{corollary}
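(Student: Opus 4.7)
The plan is to derive the bound directly from the closed-form expression for $k^{U}_{p,\gamma,a,b}$ established in Proposition~\ref{P:arith-prog-kernel}. Writing $t = z\overline{w}|w|^{p-2}$ so that $|t| = |z||w|^{p-1}$, and noting that $U \subset \overline{\D}$ forces $|z|, |w| \le 1$ and hence $|t| \le 1$, we immediately obtain $|t^\theta| = (|z||w|^{p-1})^\theta$ and $|t|^b \le 1$.

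Applying the triangle inequality to the numerator of \eqref{E:arith-prog-kern-form} gives
\[
\bigl|(p\theta + \gamma + 2) - (\gamma + 2 + p(\theta - b)) t^b\bigr| \le |p\theta + \gamma + 2| + |\gamma + 2 + p(\theta - b)|,
\]
which is a finite constant $C'$ depending only on the fixed parameters $p, \gamma, \theta, b$ (not on $z,w$). Therefore
\[
\bigl|k^{U}_{p,\gamma,a,b}(z,w)\bigr| \le \frac{C'}{2\pi} \cdot \frac{|t|^\theta}{|1-t^b|^2}.
\]

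Finally, since $t^b = z^b\overline{w}^b|w|^{(p-2)b}$, the denominator is exactly $|1 - z^b\overline{w}^b|w|^{(p-2)b}|^2$, and absorbing $C'/(2\pi)$ into the constant $C$ yields the desired inequality. There is really no obstacle here: the only point worth flagging is that the argument uses only $|z|,|w|\le 1$ (valid on both $\D$ and $\D^*$) to control $|t|^b$, and that $\theta$, determined by $p,\gamma,a,b$ via Proposition~\ref{P:SetA-arith-prog}, is a fixed integer, so the constant $C$ depends on $p, \gamma, a, b$ but not on $z,w$.
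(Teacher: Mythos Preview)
Your proof is correct and follows exactly the paper's approach: both derive the bound directly from the closed-form expression \eqref{E:arith-prog-kern-form} in Proposition~\ref{P:arith-prog-kernel}. The paper's own proof is a single line (``This follows from \eqref{E:arith-prog-kern-form}, on noting that $(p\theta + \gamma +2)$ is necessarily positive''), and you have simply spelled out the triangle-inequality step and the substitution $t^b = z^b\ol{w}^b|w|^{(p-2)b}$ explicitly.
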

\begin{proof}
This follows from \eqref{E:arith-prog-kern-form}, on noting that $(p\theta + \gamma +2)$ is necessarily positive.
\end{proof}

Setting $a=0, \,b=1$ in Proposition~\ref{P:arith-prog-kernel} yields the  MBKs of $A^p(\D^*,\mug)$ and $A^p(\D,\mug)$:

\begin{corollary}\label{C:MBK-calculation-(punctured)disc}
Let $\gamma \in \R$, $\mug(z)=\abs{z}^\gamma$ and $t= z\ol{w}\abs{w}^{p-2}$.  The Monomial Basis Kernels of $A^p(\D^*,\mug)$ and $A^p(\D,\mug)$ are given, respectively, by 
\begin{enumerate}[wide]
\item $\displaystyle{K^{\D^*}_{p,\mug}(z,w)=  \frac{1}{2\pi} \cdot \frac{(p\ell+\gamma+2)t^\ell- (\gamma+2 +p(\ell-1))t^{\ell+1} }{(1-t)^2}}$, where $\ell = \floor*{-\frac{\gamma+2}{p}+1}$. \label{E:MBK-formula-punctured-disc1}
\item $\displaystyle{K^{\D}_{p,\mug}(z,w)=  \frac{1}{2\pi} \cdot \frac{(pL+\gamma+2)t^L- (\gamma+2 +p(L-1))t^{L+1} }{(1-t)^2}}$, where $L  = \max\{\ell,0\}$. \label{E:MBK-formula-punctured-disc2}
\end{enumerate}
\end{corollary}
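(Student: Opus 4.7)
The plan is to obtain Corollary~\ref{C:MBK-calculation-(punctured)disc} as a direct specialization of Proposition~\ref{P:arith-prog-kernel} with the parameters $a=0$ and $b=1$. Under this choice the congruence condition $\alpha \equiv 0 \pmod 1$ is vacuous, so $\ca(U,p,\gamma,0,1) = \cs_p(U,\mug)$, and comparing definitions \eqref{D:arith-prog-kern} and \eqref{E:MBK-alt-form} one sees that the subkernel $k^U_{p,\gamma,0,1}$ coincides with the full Monomial Basis Kernel $K^U_{p,\mug}$. Thus no new series computation is required; the task reduces to identifying the starting index $\theta$ and then substituting.

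For the punctured disc $U=\D^*$, the index $\theta$ from Proposition~\ref{P:SetA-arith-prog} is the smallest integer satisfying $p\theta+\gamma+2>0$; solving for $\theta$ gives $\theta=\lfloor -\frac{\gamma+2}{p}+1\rfloor$, which is exactly the number $\ell$ appearing in part~\eqref{E:MBK-formula-punctured-disc1}. For the disc $U=\D$, one has the additional constraint $\theta\ge 0$ coming from the requirement that $e_\alpha$ extends holomorphically across the origin, so $\theta=\max\{\ell,0\}=L$, matching the definition in part~\eqref{E:MBK-formula-punctured-disc2}. Substituting these values of $\theta$ together with $b=1$ into the closed-form expression \eqref{E:arith-prog-kern-form} yields the two displayed formulas verbatim. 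The only point requiring any attention, though it is routine, is confirming the expression for $\ell$ in the $\D^*$ case: one should verify that the remark's formula specializes correctly and that the strict inequality $p\theta+\gamma+2>0$ is handled properly when $-(\gamma+2)/p$ happens to be an integer, but in either case the stated floor-formula gives the minimal admissible exponent.

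Since the entire argument is a substitution, there is no genuine obstacle; the proof is essentially a one-line deduction from Proposition~\ref{P:arith-prog-kernel} once the correspondence $\theta \leftrightarrow \ell$ (respectively $\theta \leftrightarrow L$) is recorded.
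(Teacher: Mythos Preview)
Your proposal is correct and matches the paper's approach exactly: the paper introduces the corollary with the sentence ``Setting $a=0,\ b=1$ in Proposition~\ref{P:arith-prog-kernel} yields the MBKs of $A^p(\D^*,\mug)$ and $A^p(\D,\mug)$,'' and gives no further proof. Your additional remarks identifying $\theta$ with $\ell$ (respectively $L$) via the Remark following Proposition~\ref{P:SetA-arith-prog} are accurate and simply make explicit what the paper leaves implicit.
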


\subsection{Two tools}
We now recall two important results.

\begin{proposition}\label{P:Schurs-test}
For $1 \leq j \leq N$, let $D_j$ be a domain in $\rl^{n_j}$, let $K_j:D_j\times D_j \to [0,\infty)$ be a positive kernel on $D_j$, and let $\lambda^j$ be an a.e. positive weight on $D_j$. 
Suppose that for each $j$, there exist a.e. positive measurable functions $\phi_j, \psi_j$ on $D_j$ and constants $C_1^{j}, C_2^{j} > 0$ such that the following two estimates hold:
\begin{enumerate}[wide]
\item For every $z \in D_j$, $\displaystyle{\int_{D_j} K_j(z,w) \psi_j(w)^q \lambda^j(w)\, dV(w) \le C_1^j \phi_j(z)^q}$.
\item For every $w \in D_j$, $\displaystyle{\int_{D_j} \phi_j(z)^pK_j(z,w)  \lambda^j(z)\, dV(z) \le C_2^j \psi_j(w)^p}$.
\end{enumerate}

Now let $D = D_1 \times \cdots \times D_N$ be the product of the domains, let $K(z,w) = \prod_{j=1}^N K_j(z_j,w_j)$, where $z_j, w_j\in D_j$, $z=(z_1,\dots, z_N)\in D$, $w=(w_1, \dots, w_N)\in D$, and  let $\lambda(w) = \prod_{j=1}^N \lambda^j(w_j)$.
Then the following operator is bounded on $L^p(D,\lambda)$:
\[
\bm{T}f(z) = \int_D K(z,w) f(w) \lambda(w) dV(w).
\]
\end{proposition}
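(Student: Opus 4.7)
The plan is to reduce the product statement to the classical one-domain Schur test, which I will state and then verify on the product. Recall the classical Schur test in the form needed here: if $D$ is a measure space with weight $\lambda$, $K:D\times D \to [0,\infty)$ is measurable, and there exist a.e. positive measurable functions $\phi,\psi$ on $D$ and constants $C_1,C_2>0$ with
\begin{equation*}
\int_D K(z,w)\psi(w)^q \lambda(w)\,dV(w) \le C_1 \phi(z)^q, \qquad \int_D \phi(z)^p K(z,w)\lambda(z)\,dV(z) \le C_2 \psi(w)^p,
\end{equation*}
then the positive integral operator with kernel $K$ (against the measure $\lambda\,dV$) is bounded on $L^p(D,\lambda)$ with norm at most $C_1^{1/q}C_2^{1/p}$. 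This is a standard two-line argument: write
\begin{equation*}
K(z,w)|f(w)|\lambda(w) = \bigl(K(z,w)\psi(w)^q \lambda(w)\bigr)^{1/q}\bigl(K(z,w)\psi(w)^{-p}|f(w)|^p\lambda(w)\bigr)^{1/p},
\end{equation*}
apply H\"older's inequality in $w$, use the first bound on the first factor to obtain $|\bm{T}f(z)|^p \le C_1^{p/q}\phi(z)^p\int_D K(z,w)\psi(w)^{-p}|f(w)|^p\lambda(w)\,dV(w)$, then integrate in $z$ against $\lambda$ and apply Fubini together with the second bound.

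With this in hand, the main step is to lift the one-factor hypotheses to the product. Define test functions $\Phi,\Psi$ on $D$ by
\begin{equation*}
\Phi(z)=\prod_{j=1}^N \phi_j(z_j), \qquad \Psi(w)=\prod_{j=1}^N \psi_j(w_j),
\end{equation*}
and set $C_1 = \prod_{j=1}^N C_1^j$, $C_2 = \prod_{j=1}^N C_2^j$. Because $K$, $\lambda$, $\Phi$, and $\Psi$ all factor as products indexed by $j$, Tonelli's theorem lets us write the $N$-fold integral as an iterated integral, apply hypothesis (1) in each variable separately, and conclude
\begin{equation*}
\int_D K(z,w)\Psi(w)^q \lambda(w)\,dV(w) = \prod_{j=1}^N \int_{D_j} K_j(z_j,w_j)\psi_j(w_j)^q \lambda^j(w_j)\,dV(w_j) \le C_1\,\Phi(z)^q.
\end{equation*}
The symmetric computation using hypothesis (2) gives $\int_D \Phi(z)^p K(z,w)\lambda(z)\,dV(z)\le C_2\,\Psi(w)^p$.

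Finally, I apply the classical Schur test recalled above to the kernel $K$ on $D$ with weight $\lambda$ and test functions $\Phi,\Psi$, concluding that $\bm{T}$ is bounded on $L^p(D,\lambda)$ with norm at most $C_1^{1/q}C_2^{1/p}$. There is essentially no obstacle: the only things to be slightly careful about are that $\Phi$ and $\Psi$ remain a.e. positive and measurable (immediate from the same properties of the factors), and that Tonelli's theorem applies to the nonnegative integrands (also immediate). So the proof reduces to the classical Schur test plus a clean Fubini decomposition of product integrals.
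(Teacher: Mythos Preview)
Your proof is correct and follows exactly the same approach as the paper: reduce the product case to the classical single-domain Schur test by taking tensor-product test functions $\Phi(z)=\prod_j\phi_j(z_j)$, $\Psi(w)=\prod_j\psi_j(w_j)$ and using Tonelli to verify the two Schur hypotheses factorwise. If anything, you supply more detail than the paper, which simply cites the classical Schur test and indicates the product reduction in one sentence.
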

\begin{proof}
When $N=1$, this is the classical Schur's test for boundedness of integral operators on $L^p$-spaces (see \cite[Theorem~3.6]{zhubook}). 
The case $N \geq 2$ reduces to the case $N=1$, if we let $\phi(z) = \prod_{j=1}^N \phi_j(z_j)$ and $\psi(z) = \prod_{j=1}^N \psi_j(z_j)$ and use the Tonelli-Fubini theorem to represent integrals over $D$ as repeated integrals over the product representations.
\end{proof}

\begin{proposition}[Lemma 3.4 of \cite{EdhMcN16}; also see \cite{ForRud74} for  $\beta = 0$]\label{P:standard-Bergman-disc-estimate}
Let $U=\D$ or $\D^*$,  $0<\epsilon <1$ and $\beta> -2$. There exists $C>0$ such that
\begin{equation}
\int_U \frac{(1-|w|^2)^{-\epsilon}}{|1-z\ol{w}|^2}|w|^{\beta}\,dV(w) \le C (1-|z|^2)^{-\epsilon}.
\end{equation}
\end{proposition}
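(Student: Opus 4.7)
The plan is to reduce the two-dimensional integral to a one-dimensional one via polar coordinates, then isolate the singular behavior near $|w|=1$ through a scaling substitution. Since the removal of the single point $\{0\}$ does not affect the integral, both cases $U=\D$ and $U=\D^*$ can be handled simultaneously.

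Writing $w = \rho e^{i\theta}$ so that $dV(w) = \rho\,d\rho\,d\theta$, the angular part is the classical identity
\[
\int_0^{2\pi} \frac{d\theta}{|1-z\rho e^{-i\theta}|^2} = \frac{2\pi}{1-|z|^2\rho^2},
\]
obtained by expanding $(1-z\rho e^{-i\theta})^{-1}$ as a geometric series and using orthogonality of the characters on $\T$. This collapses the target integral to
\[
2\pi \int_0^1 \frac{(1-\rho^2)^{-\epsilon}\,\rho^{\beta+1}}{1-|z|^2\rho^2}\,d\rho,
\]
in which the factor $\rho^{\beta+1}$ is integrable at the origin precisely because $\beta > -2$.

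Next, I would substitute $u = 1-\rho^2$ to rewrite the integral (up to a positive multiplicative constant and a bounded factor $(1-u)^{\beta/2}$) as
\[
\int_0^1 \frac{u^{-\epsilon}(1-u)^{\beta/2}}{a + |z|^2 u}\,du, \qquad a := 1-|z|^2.
\]
For $|z|$ bounded away from $1$ the bound is trivial, so assume $|z|^2 \ge \tfrac12$, whence $a+|z|^2 u \asymp a+u$. Split the range at $u=\tfrac12$: on $[\tfrac12,1]$ one has $a+u\ge 1$, so the integrand is dominated by $(1-u)^{\beta/2}$, which is integrable since $\beta/2 > -1$. On $[0,\tfrac12]$ the factor $(1-u)^{\beta/2}$ is bounded, reducing the problem to estimating
\[
\int_0^{1/2}\frac{u^{-\epsilon}}{a+u}\,du.
\]

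Finally, the scaling $u = at$ converts the last integral into
\[
a^{-\epsilon}\int_0^{1/(2a)}\frac{t^{-\epsilon}}{1+t}\,dt \;\le\; a^{-\epsilon}\int_0^\infty\frac{t^{-\epsilon}}{1+t}\,dt,
\]
and the last integral converges because the condition $0<\epsilon<1$ controls both the singularity at $t=0$ and the decay $\sim t^{-1-\epsilon}$ at $t=\infty$ (its value is in fact $\pi/\sin(\pi\epsilon)$). Tracking constants yields the bound by $C(1-|z|^2)^{-\epsilon}$. There is no single hard step; the only care required is verifying that the hypotheses $0<\epsilon<1$ and $\beta>-2$ are exactly what is needed to guarantee integrability at each endpoint after the two successive substitutions.
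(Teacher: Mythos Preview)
Your argument is correct. The angular-integral identity, the substitution $u=1-\rho^2$, and the scaling $u=at$ are the standard moves for this Forelli--Rudin type estimate, and you have correctly identified where each hypothesis ($0<\epsilon<1$ for the convergence of $\int_0^\infty t^{-\epsilon}/(1+t)\,dt$, and $\beta>-2$ for integrability of $(1-u)^{\beta/2}$ near $u=1$ and of $\rho^{\beta+1}$ near $\rho=0$) is used.

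Note, however, that the paper does not supply its own proof of this proposition: it is quoted as Lemma~3.4 of \cite{EdhMcN16} (with the $\beta=0$ case going back to \cite{ForRud74}) and used as a black box in the Schur-test estimates of Proposition~\ref{P:abs-boundedness-arith-prog-operators}. So there is no in-paper proof to compare against; what you have written is essentially the classical argument behind those cited results.
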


\subsection{\texorpdfstring{$L^p$}{lp}-boundedness of operators}\label{SS:Lp-bounded-operators}

We now prove that arithmetic progression subkernels represent absolutely bounded operators. 
In particular, the existence and absolute boundedness of the Monomial Basis Projections of $A^p(\D^*,\mug)$ and $A^p(\D,\mug)$ are established.

\begin{proposition}\label{P:abs-boundedness-arith-prog-operators}
Define the following auxiliary functions on $U$:
\begin{align*}
\phi(z) = |z|^{\frac{\theta}{q}}(1-|z|^{2b})^{-\frac{1}{pq}}, \qquad  
\psi(w) = |w|^{\frac{\theta}{q}}(1-|w|^{2b(p-1)})^{-\frac{1}{pq}}.
\end{align*}
There exist constants $C_1, C_2>0$, such that the following estimates hold:
\begin{enumerate}
\item For $z\in U$, $\displaystyle{\int_{U} \abs{k^{U}_{p,\gamma,a,b}(z,w)}  \psi(w)^q \mug(w)\, dV(w) \le C_1 \phi(z)^q}$. \label{E:arith-kern-est1} \\
\item For $w\in U$, $\displaystyle{\int_{U} \phi(z)^p\abs{k^{U}_{p,\gamma,a,b}(z,w)}  \mug(z)\, dV(z) \le C_2 \psi(w)^p}$. \label{E:arith-kern-est2}
\end{enumerate}
\end{proposition}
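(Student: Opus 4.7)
The plan is to start from the pointwise bound from Corollary~\ref{C:kernel-bound}, namely
\[
|k^U_{p,\gamma,a,b}(z,w)| \le C\, \frac{(|z||w|^{p-1})^{\theta}}{|1-z^b\bar w^b|w|^{(p-2)b}|^2},
\]
and reduce each of the two inequalities to the standard disc estimate in Proposition~\ref{P:standard-Bergman-disc-estimate} via a pair of radial substitutions. The definitions of $\phi$ and $\psi$ are tailored so that, after the dust settles, the $|z|^\theta$ and $|w|^{(p-1)\theta}$ factors coming from the kernel bound absorb cleanly into $\phi(z)^q$ and $\psi(w)^p$ respectively.

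For estimate (1), after pulling $|z|^\theta$ outside and collecting powers of $|w|$, the key observation is that the denominator factors as $|1-z^b\bar\sigma^b|^2$ where $\sigma = w|w|^{p-2}=\chi_p(w)$. First substitute $\sigma$ for $w$ (a radial rescaling in polar coordinates since $|\sigma|=|w|^{p-1}$), then $\tau = \sigma^b$ (a $b$-to-one branched cover, with an explicit Jacobian correction of the form $b^{-1}|\tau|^{-2(b-1)/b}$). These substitutions turn $(1-|w|^{2b(p-1)})^{-1/p}$ into $(1-|\tau|^2)^{-1/p}$ and the denominator into $|1-z^b\bar\tau|^2$. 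Tracking the Jacobians and combining with $|w|^{p\theta+\gamma}$, one arrives at an integral of the form
\[
\int_U \frac{|\tau|^{M}(1-|\tau|^2)^{-1/p}}{|1-z^b\bar\tau|^2}\,dV(\tau), \qquad M = \frac{p\theta+\gamma+2}{b(p-1)} - 2.
\]
Proposition~\ref{P:standard-Bergman-disc-estimate}, applied with $\epsilon = 1/p$ and $z$ replaced by $z^b\in U$, then produces exactly $C(1-|z|^{2b})^{-1/p}$, which combines with $|z|^\theta$ to give $C_1 \phi(z)^q$. Estimate (2) is handled analogously but is technically lighter: the variable $z$ appears in the kernel only as $z^b$ with no twist, so a single substitution $\xi = z^b$ (and no application of $\chi_p$) is required, followed by Proposition~\ref{P:standard-Bergman-disc-estimate} with $\epsilon = 1/q$ and $z$ replaced by $w^b|w|^{(p-2)b}$, whose modulus $|w|^{b(p-1)}$ lies in $(0,1)$. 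The arithmetic identity $p/q+1 = p$ ensures that the $|z|$-exponent bookkeeping works out and that $|w|^{(p-1)\theta}$ merges with $(1-|w|^{2b(p-1)})^{-1/q}$ to yield $C_2\psi(w)^p$.

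The principal technical point, and the only place that real care is needed, is verifying that the exponent $M$ above satisfies the admissibility hypothesis $M>-2$ from Proposition~\ref{P:standard-Bergman-disc-estimate}. A direct computation reveals that $M > -2$ is equivalent to $p\theta+\gamma+2>0$, which is precisely the condition that singles out $\theta$ as the smallest element of $\ca(U,p,\gamma,a,b)$ in Proposition~\ref{P:SetA-arith-prog}. This alignment of the Bergman-estimate integrability threshold with the defining inequality for the arithmetic progression is what makes the proof uniform across all choices of $a$ and $b$: once $\theta$ is chosen to be the minimal admissible index, the standard disc estimate is automatically applicable.
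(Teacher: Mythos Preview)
Your proposal is correct and follows essentially the same route as the paper. The only cosmetic difference is that for estimate~(1) you factor the change of variables into two steps, $\sigma = w|w|^{p-2}$ followed by $\tau=\sigma^b$, whereas the paper performs the single composite substitution $\zeta = w^b|w|^{(p-2)b}$; these yield the same integral (your $\tau$ is the paper's $\zeta$, and your exponent $M=\frac{p\theta+\gamma+2}{b(p-1)}-2$ agrees with the paper's $\frac{q\theta}{b}+\frac{(\gamma+2)(q-1)}{b}-2$ once one uses $q-1=\frac{1}{p-1}$).
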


\begin{proof}
Throughout this proof, $C$ will denote a positive number depending on $p,\gamma,a,b$ but independent of $z,w\in U$. 
Its value will change from step to step. 

From the kernel bound in Corollary~\ref{C:kernel-bound}, we obtain
\begin{align}
\int_{U} \big|k_{p,\gamma,a,b}^{U}(z,w)\big| \psi(w)^q \mug(w)\, dV(w) 
 &\le C \int_{U} \frac{(|z||w|^{p-1})^{\theta}}{|1-z^b\ol{w}^b|w|^{(p-2)b}|^2} \psi(w)^q \mug(w)\, dV(w) \notag \\
&= C|z|^{\theta} \int_{U} \frac{ \left(1-|w|^{2b(p-1)}\right)^{-\frac{1}{p}} }{|1-z^b\ol{w}^b|w|^{(p-2)b}|^2} |w|^{p\theta + \gamma} \, dV(w). \label{E:integral-aux-1}
\end{align}
Set $\zeta = w^b|w|^{(p-2)b}$, so $|\zeta| = |w|^{(p-1)b}$,  $|w| = |\zeta|^{\frac{q-1}{b}}$ and $dV(w) = \left(\frac{q-1}{b^2}\right) |\zeta|^{\frac{2(q-1)}{b}-2}dV(\zeta)$. 
This change of variable shows
\begin{align}\label{E:integral-aux-2}
\eqref{E:integral-aux-1} \le C |z|^{\theta} \int_{U} \frac{(1-|\zeta|^2)^{-\frac{1}{p}}}{|1-z^b\ol{\zeta}|^2} |\zeta|^{\frac{q\theta}{b} + \frac{(\gamma+2)(q-1)}{b} - 2}\, dV(\zeta).
\end{align}
This integral converges if and only if $q\theta + (\gamma+2)(q-1) > 0$.
Multiplying by the positive number $\frac{p}{q}$, we see this condition is equivalent to requiring that $p\theta + \gamma + 2 > 0$, which is guaranteed to hold.
Indeed, in the proof of Proposition~\ref{P:SetA-arith-prog}, $\theta$ was shown to be the smallest integer such that (i) $\theta \equiv a \mod b$, and (ii) $p\theta + \gamma + 2 > 0$.
 Now apply Proposition~\ref{P:standard-Bergman-disc-estimate}:
\begin{align*}
\eqref{E:integral-aux-2} \le C |z|^{\theta} (1-|z|^{2b})^{-\frac{1}{p}} = C \left( |z|^{\frac{\theta}{q}}(1-|z|^{2b})^{-\frac{1}{pq}} \right)^q = C \phi(z)^q,
\end{align*}
giving us estimate \eqref{E:arith-kern-est1} upon taking the final constant $C$ to be $C_1$. 
Now consider
\begin{align}
\int_{U} \big|k_{p,\gamma,a,b}^{U}(z,w)\big| \phi(z)^p \mug(z)\, dV(z)\le C \int_{U} \frac{(|z||w|^{p-1})^{\theta}}{|1-z^b\ol{w}^b|w|^{(p-2)b}|^2} \phi(z)^p \mug(z)\, dV(z) \notag \\
= C |w|^{(p-1)\theta}\int_{U} \frac{(1-|z|^{2b})^{-\frac{1}{q}}}{|1- w^b|w|^{(p-2)b}\ol{z}^b|^2}|z|^{(1 + \frac{p}{q})\theta+\gamma}\,dV(z). \label{E:integral-aux-3}
\end{align}
Set $\xi = z^b$, which says that $|z| = |\xi|^{\frac{1}{b}}$ and $dV(z) = b^{-2}|\xi|^{\frac{2}{b}-2}dV(\xi)$. 
This shows that
\begin{align}\label{E:integral-aux-4}
\eqref{E:integral-aux-3} \le C |w|^{(p-1)\theta}\int_{U} \frac{(1-|\xi|^{2})^{-\frac{1}{q}}}{|1- w^b|w|^{(p-2)b}\ol{\xi}|^2}\,|\xi|^{\frac{p\theta}{b}+\frac{\gamma+2}{b}-2}\,dV(\xi),
\end{align}
This integral converges since $p\theta + \gamma + 2 > 0$ (this is the same condition as before). 
Now apply Proposition~\ref{P:standard-Bergman-disc-estimate} again to see
\begin{equation*}\eqref{E:integral-aux-4} 
\le C |w|^{(p-1)\theta}\big(1-|w|^{2b(p-1)}\big)^{-\frac{1}{q}} 
= C\left( |w|^{\frac{\theta}{q}}(1-|w|^{2b(p-1)})^{-\frac{1}{pq}} \right)^p
= C \psi(z)^p,
\end{equation*}
giving estimate \eqref{E:arith-kern-est2} upon taking the final constant $C$ to be $C_2$.
\end{proof}
 
\begin{corollary}\label{C:abs-subprojections-on-U}
The following operator is bounded on $L^p(U,\mug)$:
\begin{equation}\label{E:absolute-mbp-def-1-dim}
\bm{T}^{U}_{p,\gamma,a,b}(f)(z) = \int_U \left|k^{U}_{p,\gamma,a,b}(z,w) \right|f(w)\mug(w)dV(w).
\end{equation}
\end{corollary}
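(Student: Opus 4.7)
The plan is that Corollary~\ref{C:abs-subprojections-on-U} follows immediately by applying Schur's test (Proposition~\ref{P:Schurs-test} with $N=1$) to the integral operator $\bm{T}^U_{p,\gamma,a,b}$. The candidate Schur test functions are already supplied: take $\phi$ and $\psi$ to be the functions
\[
\phi(z)=|z|^{\theta/q}(1-|z|^{2b})^{-\frac{1}{pq}},\qquad \psi(w)=|w|^{\theta/q}(1-|w|^{2b(p-1)})^{-\frac{1}{pq}}
\]
defined in Proposition~\ref{P:abs-boundedness-arith-prog-operators}, the kernel to be $|k^U_{p,\gamma,a,b}(z,w)|$, and the weight to be $\mug$.

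First I would verify the hypotheses of Proposition~\ref{P:Schurs-test}. The kernel $|k^U_{p,\gamma,a,b}|$ is nonnegative on $U\times U$, and both $\phi$ and $\psi$ are positive a.e.\ on $U$ (they vanish only on $\{z_j=0\}$ for $U=\D$ when $\theta>0$, which is a null set). The two integral inequalities required for Schur's test are exactly parts~(1) and~(2) of Proposition~\ref{P:abs-boundedness-arith-prog-operators}, with constants $C_1$ and $C_2$.

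Applying Schur's test in the case $N=1$ then yields that $\bm{T}^U_{p,\gamma,a,b}$ is bounded on $L^p(U,\mug)$, with operator norm at most $C_1^{1/q}C_2^{1/p}$. There is no real obstacle here: the work has already been done in Proposition~\ref{P:abs-boundedness-arith-prog-operators}, and this corollary is essentially a one-line deduction packaging those estimates via the classical Schur criterion.
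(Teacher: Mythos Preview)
Your proposal is correct and matches the paper's own proof essentially verbatim: the paper simply notes that estimates (1) and (2) of Proposition~\ref{P:abs-boundedness-arith-prog-operators} allow immediate application of Proposition~\ref{P:Schurs-test} with $N=1$. Your additional remarks about positivity of $\phi,\psi$ and the operator norm bound are fine elaborations but not needed.
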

\begin{proof}
Estimates \eqref{E:arith-kern-est1} and \eqref{E:arith-kern-est2} in Proposition \ref{P:abs-boundedness-arith-prog-operators} allow for immediate application of Proposition~\ref{P:Schurs-test} with $N=1$, proving the result.
\end{proof}

\begin{corollary}\label{C:AMBO-MBP-existence-on-U}
The Monomial Basis Projections of the spaces $A^p(\D,\mug)$ and $A^p(\D^*,\mug)$ exist and are absolutely bounded.
\end{corollary}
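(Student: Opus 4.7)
The plan is to observe that this corollary is essentially an immediate combination of Corollary~\ref{C:abs-subprojections-on-U} (specialized to the case $a=0$, $b=1$) with the existence criterion for the MBP given by Proposition~\ref{prop:MBP-is-a-basis-projection}. The key identification I would make first is that, when $a=0$ and $b=1$, the arithmetic progression set defined in \eqref{E:arithmetic-progression_A_{a,b}} satisfies $\ca(U,p,\gamma,0,1) = \cs_p(U,\mug)$, so by comparing \eqref{D:arith-prog-kern} with \eqref{E:def-of-monomial-basis-kernel} the arithmetic progression subkernel $k^U_{p,\gamma,0,1}$ coincides with the full Monomial Basis Kernel $K^U_{p,\mug}$. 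Consequently, the operator $\bm{T}^U_{p,\gamma,0,1}$ of \eqref{E:absolute-mbp-def-1-dim} is exactly the absolute operator associated with the MBK of $A^p(U,\mug)$.

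Next, I would invoke Corollary~\ref{C:abs-subprojections-on-U} (with $a=0$, $b=1$) to conclude that this absolute operator is bounded on $L^p(U,\mug)$. Since pointwise $|K^U_{p,\mug}(z,w)| = k^U_{p,\gamma,0,1}(z,w)$ up to the modulus inside, and since absolute integrability dominates the signed integral, the integral operator $\bm{Q}$ defined by \eqref{eq-qdef} with kernel $K^U_{p,\mug}$ inherits the $L^p$-estimate \eqref{eq-lpest}. By Proposition~\ref{prop:MBP-is-a-basis-projection}, this is precisely the necessary and sufficient condition for the existence of the Monomial Basis Projection of $A^p(U,\mug)$, so $\bm{P}^U_{p,\mug}$ exists as a bounded surjective projection from $L^p(U,\mug)$ onto $A^p(U,\mug)$.

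Finally, absolute boundedness of the MBP is nothing other than the boundedness of $\bm{T}^U_{p,\gamma,0,1}$, which has already been established. Thus both claims — existence and absolute boundedness — follow, and this works uniformly for $U = \D$ and $U = \D^*$ since Corollary~\ref{C:abs-subprojections-on-U} treats both cases simultaneously. I do not foresee any real obstacle here: all the work has been done in Proposition~\ref{P:abs-boundedness-arith-prog-operators} (where the Schur-test weights $\phi,\psi$ were produced) and in the equivalence of Proposition~\ref{prop:MBP-is-a-basis-projection}; this corollary is simply the bookkeeping step that assembles them.
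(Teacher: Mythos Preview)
Your proposal is correct and follows essentially the same route as the paper: identify the MBK of $A^p(U,\mug)$ with the subkernel $k^U_{p,\gamma,0,1}$, invoke Corollary~\ref{C:abs-subprojections-on-U} for absolute boundedness, and deduce existence from the $L^p$-estimate criterion. The only cosmetic difference is that the paper cites Theorem~\ref{thm-mbpmbk} (whose proof rests on Proposition~\ref{prop:MBP-is-a-basis-projection}) for the existence implication, whereas you cite Proposition~\ref{prop:MBP-is-a-basis-projection} directly.
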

\begin{proof}
Absolute boundedness (which by Theorem \ref{thm-mbpmbk} implies existence) follows from Corollary~\ref{C:abs-subprojections-on-U} on noting that the MBK of $A^p(U,\mug)$ coincides with the subkernel $k^{U}_{p,\gamma,0,1}$.
\end{proof}


\section{Transformation formula}\label{sec-monomialtransformation}

\subsection{The canonical-bundle pullback} \label{sec-sharppullback}
If $\phi:\Omega_1\to \Omega_2$ is a  finite-sheeted holomorphic map of domains in $\cx^n$, and $f$ is a function on $\Omega_2$, we define a function on $\Omega_1$ by setting 
\begin{equation}\label{eq-sharp}
\phi^\sharp(f)= f\circ \phi\cdot \det \phi',
\end{equation}
where $\phi'(z):\cx^n\to \cx^n$ is the complex derivative of the map $\phi$ at $z\in \Omega_1$.
If we think of $\Omega_1, \Omega_2$ as subsets of $\rl^{2n}$ and $\phi$ as a smooth mapping, we can also consider the $2n\times 2n$ real Jacobian $D\phi$ of $\phi$.
Using the well-known relation $\det D\phi =\abs{\det\phi'}^2$ between the two  Jacobians, we see that $\phi^\sharp$ is a continuous linear mapping of Hilbert spaces $\phi^\sharp: L^2(\Omega_2)\to L^2(\Omega_1)$, and restricts to a map  $A^2(\Omega_2) \to A^2(\Omega_1)$. 
We will refer to $\phi^\sharp$ as the \emph{canonical-bundle pullback} induced by $\phi$, or informally as the \emph{$\sharp$-pullback}, in order to distinguish it from a second pullback to be introduced in Section \ref{SS:Density-bundle-pullbacks}.  
If $\phi$ is a biholomorphism, then $\phi^\sharp$ is an isometric isomorphism of Hilbert spaces $L^2(\Omega_2)\cong L^2(\Omega_1)$ that restricts to an isometric isomorphism $A^2(\Omega_2) \cong A^2(\Omega_1)$. 

\subsection{Proper maps of quotient type}\label{SSS:Prop-mps-quotient-type}
In the classical theory of holomorphic mappings, one considers proper holomorphic mappings, and extends the biholomorphic invariance of Bergman spaces to such mappings via Bell's transformation formula (see \cite{bellduke,belltransactions,difo,bellcat}). 
In our applications, we are concerned with a specific class of proper holomorphic mappings. We begin with the following definition (see \cite{bcem}):
\begin{definition}\label{def-quotienttype}
Let $\Omega_1, \Omega_2\subset\cx^n$ be domains, let $\Phi: \Omega_1\to \Omega_2$ be a proper holomorphic mapping and $\Gamma\subset \mathrm{Aut}(\Omega_1)$ a finite group of biholomorphic automorphisms of $\Omega_1$. 
We say $\Phi$ is of \emph{quotient type with respect to} $\Gamma$ if
\begin{enumerate}
    \item there exist closed lower-dimensional complex-analytic subvarieties $Z_j\subset \Omega_j,\, j=1,2$, such that $\Phi$ restricts to a covering map $\Phi: \Omega_1\setminus Z_1 \to \Omega_2\setminus Z_2$, and
    \item for each $z\in \Omega_2\setminus Z_2$, the action of $\Gamma$ on $\Omega_1$ restricts to a transitive action on the fiber $\Phi^{-1}(z)$.
\end{enumerate}
The group $\Gamma$ is called {\em the group of deck transformations of $\Phi$.}
\end{definition}

The restricted map $\Phi:\Omega_1\setminus Z_1 \to \Omega_2 \setminus Z_2$ is a \emph{regular} covering map (see \cite[page~135 ff.]{massey}); i.e., it gives rise to a biholomorphism between $\Omega_2\setminus Z_2$ and the quotient $(\Omega_1\setminus Z_1)/\Gamma$, where it can be shown  that $\Gamma$ acts properly and discontinuously on $ \Omega_1\setminus Z_1$. 
It follows that $\Gamma$ is the full group of deck transformations of the  covering map $\Phi:\Omega_1\setminus Z_1 \to \Omega_2\setminus Z_2$, and that this covering map has exactly $\abs{\Gamma}$ sheets, where $|\Gamma|$ is the size of the group $\Gamma$. 
By analytic continuation, the relation $\Phi\circ\sigma=\Phi$ holds for each $\sigma$ in $\Gamma$ on all of $\Omega_1$.

\begin{definition}\label{eq-gammainv-sharp}
Given a domain $\Omega\subset \cx^n$, a group $\Gamma \subset \mathrm{Aut}(\Omega)$ and a space $\mathfrak{F}$ of functions on $\Omega$, we define
\begin{equation}\label{eq-gammainv}
   \left[\mathfrak{F}\right]^\Gamma = \{f\in \mathfrak{F} : f=  \sigma^\sharp(f) \text{ for all } \sigma \in \Gamma \}, 
\end{equation}
where $\sigma^\sharp$ is the canonical-bundle pullback induced by $\sigma$ as in \eqref{eq-sharp}. 
We say that functions in this space are said to be {\em $\Gamma$-invariant in the $\sharp$ sense}, or simply {$\sharp$-invariant}. 
\end{definition}

If $L, M$ are Banach spaces, by a {\em homothetic isomorphism} ${\bm{T}}:L\to M$ we mean a bijection such that there is a  $C>0$ satisfying
\begin{equation}\label{D:homothetic-isom}
\norm{{\bm{T}} f}_M = C \norm{f}_L, \qquad \text{ for every } f \in L.
\end{equation}

Fix $1<p<\infty$ and consider a proper holomorphic mapping $\Phi:\Omega_1 \to \Omega_2$ of quotient type with respect to group $\Gamma$. 
Define the function
\begin{equation}\label{D:weight-lambda_p}
    \lambda_p  = |\det \Phi'|^{2-p}.
\end{equation}
This function arises as a weight in naturally occuring $L^p$-spaces. 
Indeed, in Proposition 4.5 of \cite{bcem} it was shown that the map 
\begin{equation}\label{E:Phi-sharp-homothetic-isomorphism}
    \Phi^\sharp:L^p(\Omega_2) \to [L^p(\Omega_1, \lambda_p)]^\Gamma
\end{equation}
is a homothetic isomorphism with 
\begin{equation}\label{E:homothetic-scaling}
\norm{\Phi^\sharp(f)}^p_{L^p(\Omega_1,\lambda_p)} = \abs{\Gamma}\cdot\norm{f}^p_{L^p(\Omega_2)},
\end{equation}
which restricts to a homothetic isomorphism of the holomorphic subspaces
\begin{equation}\label{E:Phi-sharp-holomorphic-homothetic-isomorphism}
    \Phi^\sharp:A^p(\Omega_2) \to [A^p(\Omega_1, \lambda_p)]^\Gamma.
\end{equation}

\subsection{Density-bundle pullbacks}\label{SS:Density-bundle-pullbacks}
Let $\Omega_1, \Omega_2$ be open sets in $\R^d$, and $\phi:\Omega_1\to \Omega_2$ a smooth map. 
Given a function $f$ on $\Omega_2$, define the {\em density-bundle pullback}, or {\em $\flat$-pullback}, of $f$ to be the function on $\Omega_1$ given by
\begin{equation}\label{eq-flatdef}
\phi_\flat f = f\circ \phi \cdot \abs{\det D\phi}^\frac{1}{2},
\end{equation}
where as before, $D\phi$ denotes the $d\times d$ Jacobian matrix of $\phi$.
From the change of variables formula, it follows that if $\phi:\Omega_1\to \Omega_2$ is a diffeomorphism, then the induced map $\phi_\flat:L^2(\Omega_2)\to L^2(\Omega_1)$ is an isometric isomorphism of Hilbert spaces. 
When $\Omega_1, \Omega_2$ are domains in a complex Euclidean space $\cx^n$ and the map $\phi:\Omega_1\to \Omega_2$ is holomorphic, then 
\begin{equation}\label{eq-flat}
\phi_\flat f = f\circ \phi \cdot \abs{\det \phi'},
\end{equation}
where as before, $\phi'$ denotes the complex derivative. 

\begin{definition}
Given a domain $\Omega\subset \cx^n$, group $\Gamma \subset \mathrm{Aut}(\Omega)$ and function space $\mathfrak{F}$ consisting of functions on $\Omega$, define the subspace
\begin{equation}\label{eq-gammainv2}
\left[\mathfrak{F}\right]_\Gamma = \{f\in \mathfrak{F} : f=  \sigma_\flat(f) \text{ for all } \sigma \in \Gamma \},
\end{equation}
where $\sigma_\flat$ is the density-bundle pullback in \eqref{eq-flat}.  
Functions in $\left[\mathfrak{F}\right]_\Gamma$ are said to be {\em $\Gamma$-invariant in the $\flat$ sense}, or simply {\em $\flat$-invariant} when $\Gamma$ is clear from context.
\end{definition}

The behavior of the $\flat$-pullback regarding $L^p$-spaces and $\flat$-invariant functions is analogous to the $\sharp$-pullback regarding $L^p$-spaces and $\sharp$-invariant functions:
 \begin{proposition}\label{P:Phi-flat-homot-isom}
Let $1<p<\infty$, $\Omega_1,\Omega_2$ be domains in $\C^n$ and $\Phi:\Omega_1 \to \Omega_2$ be a proper holomorphic map of quotient type with respect to the group $\Gamma \subset \mathrm{Aut}(\Omega_1)$.  Then
 \begin{equation}
\Phi_\flat:L^p(\Omega_2) \to \left[L^p(\Omega_1,\lambda_p)\right]_\Gamma
\end{equation}
is a homothetic isomorphism.
\end{proposition}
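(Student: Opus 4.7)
The plan is to leverage the homothetic isomorphism \eqref{E:Phi-sharp-homothetic-isomorphism}--\eqref{E:homothetic-scaling} already established for $\Phi^\sharp$, together with the pointwise identity
\[
\abs{\Phi_\flat f(z)} \;=\; \abs{f(\Phi(z))}\cdot\abs{\det\Phi'(z)} \;=\; \abs{\Phi^\sharp f(z)}, \qquad z\in \Omega_1,
\]
which shows that $\Phi_\flat f$ and $\Phi^\sharp f$ have identical pointwise moduli on $\Omega_1$. This immediately hands over the homothetic scaling
\[
\norm{\Phi_\flat f}_{L^p(\Omega_1,\lambda_p)}^p \;=\; \norm{\Phi^\sharp f}_{L^p(\Omega_1,\lambda_p)}^p \;=\; \abs{\Gamma}\cdot\norm{f}_{L^p(\Omega_2)}^p
\]
at no additional cost. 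A direct derivation is equally short: the identity $\abs{\det\Phi'}^2 = \det D\Phi$ reduces the integrand $\abs{\Phi_\flat f}^p\lambda_p$ to $\abs{f\circ\Phi}^p\det D\Phi$, after which the change-of-variables formula applied to the $\abs{\Gamma}$-sheeted covering $\Phi:\Omega_1\setminus Z_1\to\Omega_2\setminus Z_2$ yields the factor $\abs{\Gamma}$.

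Next I would check that $\Phi_\flat f$ actually lands in the $\flat$-invariant subspace. The chain rule $(\Phi\circ\sigma)' = (\Phi'\circ\sigma)\cdot\sigma'$, combined with the identity $\Phi\circ\sigma=\Phi$ (valid on all of $\Omega_1$ by analytic continuation), yields the pointwise relation $\abs{\det\Phi'\circ\sigma}\cdot\abs{\det\sigma'} = \abs{\det\Phi'}$ for each $\sigma\in\Gamma$. Substituting into the definition of $\sigma_\flat$ gives $\sigma_\flat(\Phi_\flat f) = (f\circ\Phi)\cdot\abs{\det\Phi'} = \Phi_\flat f$, so the image of $\Phi_\flat$ lies in $[L^p(\Omega_1,\lambda_p)]_\Gamma$. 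Injectivity is then immediate: if $\Phi_\flat f = 0$ a.e., then $f\circ\Phi = 0$ off the analytic variety $Z_1$ where $\det\Phi'$ vanishes, and surjectivity of the restricted covering forces $f\equiv 0$ a.e.\ on $\Omega_2$.

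The one step requiring genuine work, and the place where I expect the main obstacle, is surjectivity. Given $g\in[L^p(\Omega_1,\lambda_p)]_\Gamma$, I would construct a preimage by defining
\[
f(w) \;=\; \frac{g(z)}{\abs{\det\Phi'(z)}}, \qquad z \in \Phi^{-1}(w)\cap(\Omega_1\setminus Z_1),
\]
for $w\in\Omega_2\setminus Z_2$, and extending $f$ by zero across the lower-dimensional set $Z_2$. Well-definedness on fibers requires showing this quotient is independent of the choice of fiber point: replacing $z$ by $\sigma(z)$ and applying the $\flat$-invariance relation $g(\sigma(z))\abs{\det\sigma'(z)} = g(z)$ together with the chain-rule identity from the previous paragraph shows that both expressions collapse to the same value. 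Finally, running the change-of-variables computation in reverse yields $\norm{f}_{L^p(\Omega_2)}^p = \abs{\Gamma}^{-1}\norm{g}_{L^p(\Omega_1,\lambda_p)}^p$ and the identity $\Phi_\flat f = g$ almost everywhere, completing the argument.
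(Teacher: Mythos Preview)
Your proof is correct and follows essentially the same route as the paper's. Two minor differences worth noting: (i) you obtain the homothetic scaling by the pointwise identity $\abs{\Phi_\flat f}=\abs{\Phi^\sharp f}$ and invoking the already-proved result \eqref{E:homothetic-scaling} for $\Phi^\sharp$, whereas the paper re-derives it by change of variables; (ii) for surjectivity the paper uses a partition of unity to localize to evenly covered patches and then applies $\Psi_\flat$ for a local inverse $\Psi$, while you define the preimage $f$ globally on $\Omega_2\setminus Z_2$ by picking any fiber point and checking well-definedness via $\flat$-invariance. Both constructions encode the same descent argument; yours is a bit more direct, though one should remark that measurability of $f$ follows since locally $f=\Psi_\flat(g|_{U_0})$ for a smooth local section $\Psi$.
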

\begin{proof}
Let $f \in L^p(\Omega_2)$.  By Definition \ref{def-quotienttype}, there exist varieties $Z_1 \subset\Omega_1$, $Z_2\subset\Omega_2$ such that $\Phi: \Omega_1\backslash Z_1 \to \Omega_2\backslash Z_2$ is a regular covering map of order $|\Gamma|$.  
Using the change of variables formula (accounting for the fact that $\Phi$ is a $\abs{\Gamma}$-to-one mapping), we see
\begin{equation}\label{E:Phi_flat-norm-equivalence}
    |\Gamma| \norm{f}^p_{L^p(\Omega_2)} = |\Gamma| \int_{\Omega_2\backslash Z_2} |f|^p\, dV = \int_{\Omega_1\backslash Z_1} |f \circ \Phi|^p |\det \Phi'|^2 \, dV = \norm{\Phi_\flat(f)}^p_{L^p(\Omega_1, \lambda_p)},
\end{equation}
which shows $\Phi_\flat(f) \in L^p(\Omega_1,\lambda_p)$.  Observe also that for any $\sigma \in \Gamma$,
\begin{align*}
\sigma_\flat(f \circ \Phi \cdot |\det \Phi'|)  
= f \circ (\Phi \circ \sigma) \cdot |\det (\Phi \circ \sigma)'|
= f \circ \Phi \cdot |\det \Phi'|,
\end{align*}
showing that $\Phi_\flat(f) \in \left[L^p(\Omega_1,\lambda_p)\right]_\Gamma$.  This shows $\Phi_\flat$ is a homothetic isomorphism of $L^p(\Omega_2)$ onto a subspace of $\left[ L^p(\Omega_1 ,\lambda_p) \right]_\Gamma$.  

It remains to show that this image is the full space. 
By a partition of unity argument, it is sufficient to show that a function $g\in \left[L^p(\Omega_1,\lambda_p)\right]_\Gamma$ is in the range of $\Phi_\flat$, provided the support of $g$ is contained in a set of the form $\Phi^{-1}(U)$, where $U$ is an connected open subset of $\Omega_2\setminus Z_2$ evenly covered by the covering map $\Phi$.
Notice that $\Phi^{-1}(U)$ is a disjoint collection of connected open components each biholomorphic to $U$, and if $U_0$ is one of them, $\Phi^{-1}(U)$ is the disjoint union $\bigcup_{\sigma\in \Gamma} \sigma(U_0)$. 
Let $\Psi:U\to U_0$ be the local inverse of $\Phi$ onto $U_0$.
Define $f_0$ on $U$ by $f_0 = \Psi_\flat\left( g|_{U_0}\right)$.
We claim that $f_0$ is defined independently of the choice of the component $U_0$ of $\Phi^{-1}(U)$. Indeed, any other choice is of the form $\sigma(U_0)$ for some $\sigma\in \Gamma$  and the
corresponding local inverse is $\sigma\circ \Psi$. But we have
\[
(\sigma\circ \Psi)_\flat \left( g|_{\sigma(U_0)}\right) = \Psi_\flat \circ \sigma_\flat \left( g|_{\sigma(U_0)}\right)
= \Psi_\flat \left( g|_{U_0}\right)  =f_0,
\]
where we have used the fact that $ \sigma_\flat g=g$ since $g\in \left[ L^p(\Omega_1,\lambda_p)\right ]_\Gamma$. A partition of unity argument completes the proof. 
\end{proof}

\subsection{Monomial maps}\label{SS:MonomialMaps}

Consider an $n\times n$ integer matrix $A$ whose element in the $j$-th row and $k$-th column of $A$ is $a_k^j$. 
Let $a^j$ denote the $j$-th row of $A$, and $a_k$ the $k$-th column. 
Letting the rows of $A$ correspond to monomials $e_{a^j}$, define for $z\in \C^n$ the {\em matrix power}
\begin{equation}\label{eq-matrixpower}
z^A = \begin{pmatrix}
e_{a^1}(z)\\\vdots\\ e_{a^n}(z)
\end{pmatrix}
= \begin{pmatrix}
z_1^{a^1_1}z_2^{a^1_2}\cdots z_n^{a^1_n}\\\vdots\\ z_1^{a^n_1}z_2^{a^n_2}\cdots z_n^{a^n_n}
\end{pmatrix},
\end{equation}
provided each component is defined. 
Define the {\em monomial map} $\Phi_A$ to be the rational map on $\C^n$ given by
\begin{equation}\label{D:monomial-map}
    \Phi_A(z) = z^A.
\end{equation} 

The following properties of monomial maps are known in the literature and references to their proofs are given at the end of the list.
Three pieces of notation must first be explained:
The element-wise exponential map $\exp:\cx^n \to (\cx^*)^n$ is given by $\exp(z) = (e^{z_1},\dots, e^{z_n})$;
if $z = (z_1,\dots,z_n),\,\, w = (w_1,\dots,w_n)$ are points in $\C^n$, define their component-wise product to be $z \odot w = (z_1 w_1, z_2 w_2, \dots, z_n w_n)$; 
$\one \in \Z^{1 \times n}$ is a row vector with $1$ in each component.
\begin{subequations}
\begin{enumerate}
\item The following formula generalizes the familiar power-rule:
\begin{equation}\label{E:det-of-(Ph:i_A)'} 
    \det\Phi'_A = \det A\cdot{e_{\one A-\one}}.
\end{equation}
\item If $A$ is an invertible $n \times n$ matrix of nonnegative integers, then $\Phi_{A}: \C^n \to \C^n$ is a proper holomorphic map of quotient type with respect to the group
\begin{equation}\label{eq-sigmanu}
    \Gamma_A = \{ \sigma_\nu: \sigma_\nu(z)=  \exp\left(2\pi i A^{-1}\nu\right)\odot z, \,\, \nu \in \Z^{n\times 1} \}.
    \end{equation}
\item The group $\Gamma_A$ has exactly $|\det A|$ elements.
\item The canonical-bundle pullback of the monomial $e_\alpha$ via the element $\sigma_\nu \in \Gamma_A$ is
    \begin{equation}\label{E:phi-sharp-applied-to-monomial-formula}
    \sigma_\nu^\sharp(e_\alpha) = e^{2\pi i (\alpha + \one)A^{-1}\nu} \cdot e_\alpha.
    \end{equation}
\item The set of monomials that are $\Gamma_A$-invariant in the $\sharp$ sense as defined by \eqref{eq-gammainv} is
\begin{equation}\label{eq-invariantmonomials}
\{e_\alpha : \alpha = \beta A - \one, \,\, \beta \in \Z^{1\times n} \}.
\end{equation}
\end{enumerate}
\end{subequations}

\begin{proof}
Property (1) is proved in both \cite[Lemma~4.2]{NagPraDuke09} and \cite[Lemma 3.8]{bcem}.
Properties (2) and (3) can be found in \cite[Theorem 3.12]{bcem}. 
See also \cite{zwonekhab,NagPra21} for related results. 
Properties (4) and (5) are found in \cite[Proposition 6.12]{bcem}.
\end{proof}

 
\subsection{Conditions for the transformation formula}\label{SS:gen-hypothesis}

For the remainder of Section~\ref{sec-monomialtransformation}, we assume the following conditions in the statements of our results:

{\em The domain $\Omega_2 \subset \C^n$ is pseudoconvex and Reinhardt, $A$ is an $n \times n$  matrix of nonnegative integers such that $\det A \not=0$, and $\Omega_1 = \Phi_A^{-1}(\Omega_2)$, the inverse image of $\Omega_2$ under the monomial map $\Phi_A:\C^n\to \C^n$ defined in \eqref{D:monomial-map}.}

This set-up has several immediate consequences:
\begin{enumerate}
\item We obtain by restriction a proper holomorphic map
\begin{equation*}
    \Phi_A:\Omega_1 \to \Omega_2,
\end{equation*}
which is of quotient type with respect to the group $\Gamma_A$ defined in \eqref{eq-sigmanu}. 
\item The domain $\Omega_1$ is pseudoconvex and Reinhardt.
\item The weight $\lambda_p$ from \eqref{D:weight-lambda_p} is given by
\begin{equation}\label{E:lambda_p-monomial-map}
    \lambda_p(\zeta) = |\det \Phi_A'(\zeta)|^{2-p} = |\det A|^{2-p} \prod_{k=1}^n |\zeta_k|^{(\one \cdot a_k - 1)(2-p)},
\end{equation}
where as before $\one \in \Z^{1 \times n}$ has $1$ in each component and $a_k$ is the $k$-th column of $A$. 
\item By Proposition~\ref{prop-admissible}, the weight $\lambda_p$ 
is admissible in the sense of Section~\ref{sec-notation}.
\item By \eqref{E:Phi-sharp-homothetic-isomorphism} the canonical-bundle pullback gives a homothetic isomorphism 
\[
\Phi_A^{\sharp}:L^p(\Omega_2) \to [L^p(\Omega_1,\lambda_p)]^{\Gamma_A},
\]
which by \eqref{E:Phi-sharp-holomorphic-homothetic-isomorphism} restricts to a homothetic isomorphism of the holomorphic subspaces
\[
\Phi_A^\sharp : A^p(\Omega_2) \to [A^p(\Omega_1,\lambda_p)]^{\Gamma_A}.
\]
\end{enumerate}

\subsection{\texorpdfstring{$\Gamma$}{Gamma}-invariant subkernel}\label{SS:Gamma-invMBPandMBK}
Assuming the conditions and set-up established in Section~\ref{SS:gen-hypothesis}, define the following subset of $p$-allowable indices which are $\Gamma$-invariant in the $\sharp$ sense. (We often suppress reference to the matrix $A$ in our notation, writing $\Phi_A = \Phi$, $\Gamma_A = \Gamma$, etc.)
\begin{equation}\label{E:gamma-inv-allowable-indices}
\cs_p^\Gamma(\Omega_1,\lambda_p) = \{\alpha \in \cs_p(\Omega_1,\lambda_p): \sigma^\sharp(e_\alpha) = e_\alpha \textrm{ for all } \sigma \in \Gamma  \}.
\end{equation}
We use this to define the ``$\Gamma$-invariant subkernel" of the Monomial Basis Kernel:
\begin{equation}\label{E:Gamma-inv-MBK}
K_{p,\lambda_p,\Gamma}^{\Omega_1}(z,w) 
 = \sum_{\alpha \in \cs_{p}^{\Gamma}(\Omega_1,\lambda_p)} \frac{e_\alpha(z) \ol{\chi_p^*e_\alpha(w)}}{\norm{e_\alpha}_{p,\lambda_p}^p}.
\end{equation}

\begin{proposition}\label{P:Gamma-inv-allowable-indices}
The following sets are equal
\[
\left\{ e_\beta : \beta \in \cs_p^\Gamma(\Omega_1, \lambda_p) \right\} = \big\{ \tfrac{1}{\det A }\Phi^\sharp(e_\alpha): \alpha\in  \cs_p(\Omega_2) \big\}.
\]
\end{proposition}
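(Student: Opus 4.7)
The plan is to compute $\Phi^\sharp(e_\alpha)$ explicitly on monomials, and then invoke property (5) from Section~\ref{SS:MonomialMaps} together with the homothetic isomorphism \eqref{E:Phi-sharp-holomorphic-homothetic-isomorphism} to match the two sets.

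First, I would write $\Phi^\sharp(e_\alpha) = (e_\alpha \circ \Phi_A)\cdot \det \Phi_A'$ and evaluate each factor. From the definition of the matrix power in \eqref{eq-matrixpower}, one has $e_\alpha \circ \Phi_A = e_{\alpha A}$, while \eqref{E:det-of-(Ph:i_A)'} gives $\det \Phi_A' = \det A\cdot e_{\one A - \one}$. Multiplying,
\begin{equation*}
\tfrac{1}{\det A}\,\Phi^\sharp(e_\alpha) = e_{\alpha A}\cdot e_{\one A - \one} = e_{(\alpha + \one)A - \one}.
\end{equation*}
So the right-hand set of the proposition equals $\{e_{(\alpha+\one)A - \one}: \alpha \in \cs_p(\Omega_2)\}$.

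Next, I would identify the exponents. The map $\alpha \mapsto (\alpha + \one)A - \one$ is a bijection from $\Z^{1\times n}$ onto the lattice $\{\gamma A - \one : \gamma \in \Z^{1\times n}\}$ (since $\det A \ne 0$ and the map is an integer affine translation of $\gamma \mapsto \gamma A$ through the shift $\gamma = \alpha + \one$). By property (5) of Section~\ref{SS:MonomialMaps} (cf.\ \eqref{eq-invariantmonomials}), this lattice is precisely the set of exponents $\beta \in \Z^n$ for which $e_\beta$ is $\Gamma$-invariant in the $\sharp$ sense. Therefore, as a set of monomials, $\{e_{(\alpha+\one)A - \one} : \alpha \in \Z^n\}$ coincides with $\{e_\beta : \sigma^\sharp(e_\beta) = e_\beta\ \forall \sigma\in\Gamma\}$.

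Finally, I would match the $p$-allowability conditions using the homothetic isomorphism \eqref{E:Phi-sharp-holomorphic-homothetic-isomorphism}, namely $\Phi^\sharp:A^p(\Omega_2)\to [A^p(\Omega_1,\lambda_p)]^{\Gamma}$. Since $\Phi^\sharp(e_\alpha)$ is a nonzero scalar multiple of $e_{(\alpha+\one)A - \one}$, we have $e_\alpha \in A^p(\Omega_2)$ if and only if $e_{(\alpha+\one)A - \one} \in [A^p(\Omega_1,\lambda_p)]^{\Gamma}$, which combined with the exponent identification gives $\alpha \in \cs_p(\Omega_2) \iff (\alpha+\one)A - \one \in \cs_p^\Gamma(\Omega_1,\lambda_p)$. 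Running $\alpha$ over $\cs_p(\Omega_2)$ yields exactly the monomials $e_\beta$ with $\beta \in \cs_p^\Gamma(\Omega_1,\lambda_p)$, completing the equality of sets. The only mildly delicate step is confirming that the affine map on exponents is a bijection onto the full $\sharp$-invariant lattice; everything else is a direct application of the formulas already established.
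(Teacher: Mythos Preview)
Your proof is correct and follows essentially the same approach as the paper: both compute $\tfrac{1}{\det A}\Phi^\sharp(e_\alpha)=e_{(\alpha+\one)A-\one}$ and then use the homothetic isomorphism $\Phi^\sharp:A^p(\Omega_2)\to[A^p(\Omega_1,\lambda_p)]^\Gamma$ to match $p$-allowability. The only cosmetic difference is in the reverse inclusion: you invoke property~(5) (equation~\eqref{eq-invariantmonomials}) directly to identify the $\sharp$-invariant exponents as the affine lattice $\{\gamma A-\one:\gamma\in\Z^{1\times n}\}$, whereas the paper instead argues via surjectivity of $\Phi^\sharp$ together with the observation that $\Phi^\sharp$ sends a function with more than one Laurent term to a function with more than one Laurent term (hence monomials pull back to monomials). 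Both routes are short and valid; yours is arguably a bit more transparent since property~(5) is already on record.
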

\begin{proof}
Thinking of $\alpha$ as an element of $\Z^{1 \times n}$, a computation shows that $e_\alpha \circ \Phi_A = e_{\alpha A}$.  Thus
$\Phi^\sharp(e_\alpha) = (\det A) e_{(\alpha + \one)A-1}$, so we have
\begin{equation}\label{E:Schauder-simp}
\big\{ \tfrac{1}{\det A}\Phi^\sharp(e_\alpha): \alpha \in \cs_p(\Omega_2) \big\}
=\{ e_{(\alpha+\one)A-\one}:\alpha\in \cs_p(\Omega_2)\}.
\end{equation}
Since the image of $A^p(\Omega_2)$ under $\Phi^\sharp$ is the space $[A^p(\Omega_1,\lambda_p)]^\Gamma$, we see
\[
\{e_{(\alpha+\one)A-\one}:\alpha\in \cs_p(\Omega_2)\}
\subset \{ e_\beta: \beta\in \cs_p(\Omega_1,\lambda_p), \,\, \sigma^\sharp(e_\beta) = e_\beta \text{ for all } \sigma\in\Gamma\}.
\]
But since the map $\Phi^\sharp:A^p(\Omega_2) \to [A^p(\Omega_1,\lambda_p)]^\Gamma$ is linear, $\Phi^\sharp(f)$ must have more than one term in its Laurent expansion if $f$ has more than one term in its Laurent expansion.  Thus
\begin{align*}
\{ e_{(\alpha+\one)A-\one}:\alpha\in \cs_p(\Omega_2)\}
&= \{ e_\beta: \beta\in \cs_p(\Omega_1,\lambda_p), \,\, \sigma^\sharp(e_\beta)=e_\beta \text{ for all } \sigma\in\Gamma\}\nonumber\\
&= \left\{ e_\beta : \beta \in \cs_p^\Gamma(\Omega_1, \lambda_p) \right\},
\end{align*}
completing the proof.
\end{proof}

\subsection{Transforming operators with positive kernels}\label{SS:AMBO-trans-law}

We prove here a transformation law for the ``absolute" operator involving the MBK:
\begin{equation}\label{E:unweighted-AMBO}
(\bm{P}^{\Omega_2}_{p,1})^+ f(z) = \int_{\Omega_2}\abs{K^{\Omega_2}_{p,1}(z,w)}  f(w) \,dV(w), \qquad f\in C_c(\Omega_2).  
\end{equation}
This operator is defined on $C_c(\Omega_2)$, but can be extended to $L^p(\Omega_2)$ when $L^p$-estimates are shown to hold.
Define a related operator using the $\Gamma$-invariant subkernel from \eqref{E:Gamma-inv-MBK}:
\begin{equation}\label{E:invariant-AMBO}
    (\bm{P}^{\Omega_1}_{p,\lambda_p,\Gamma})^+ f(z) = \int_{\Omega_1}  \abs{K^{\Omega_1}_{p,\lambda_p,\Gamma}(z,w)}f(w) \lambda_p(w)dV(w), \qquad f \in C_c(\Omega_1).
\end{equation}
These operators are closely related via the $\flat$-pullback of Section \ref{SS:Density-bundle-pullbacks}:

\begin{theorem}\label{T:Abs-boundedness-equivalence}
The following statements are equivalent:
\begin{enumerate}
\item $(\bm{P}^{\Omega_2}_{p,1})^+$ extends to a bounded operator $(\bm{P}^{\Omega_2}_{p,1})^+: L^p(\Omega_2) \to L^p(\Omega_2)$.
\item $(\bm{P}^{\Omega_1}_{p,\lambda_p,\Gamma})^+$ extends to a bounded operator
$(\bm{P}^{\Omega_1}_{p,\lambda_p,\Gamma})^+: [L^p(\Omega_1,\lambda_p)]_\Gamma \to [L^p(\Omega_1,\lambda_p)]_\Gamma$.
\end{enumerate}
When these equivalent statements hold, 
\begin{equation}\label{E:AbsMBO-and-Phi_flat}
    \Phi_\flat \circ (\bm{P}_{p,1}^{\Omega_2})^+ = (\bm{P}_{p,\lambda_p,\Gamma}^{\Omega_1})^+ \circ \Phi_\flat
\end{equation}
as operators on $L^p(\Omega_2)$, which is to say, that the following diagram commutes
\begin{equation}\label{Diagram-absoulte-MBP}
\begin{tikzcd}			
{L}^p(\Omega_2) \arrow[r,"{\Phi}_\flat"] \arrow[r,swap,"\cong"] \arrow[d, "(\bm{P}_{p,1}^{\Omega_2})^+"] 
& L^p(\Omega_1, \lambda_p)_\Gamma \arrow[d,"(\bm{P}^{\Omega_1}_{p,\lambda_p,\Gamma})^+"] 
\\
L^p(\Omega_2)
\arrow[r,"{\Phi}_{\flat}"]
& L^p(\Omega_1, \lambda_p)_\Gamma.
\end{tikzcd}
\end{equation}
\end{theorem}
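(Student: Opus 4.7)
The plan is to reduce everything to a single pointwise identity relating the two kernels under the monomial map, and then to use that $\Phi_\flat$ is a homothetic isomorphism onto $[L^p(\Omega_1,\lambda_p)]_\Gamma$ (Proposition \ref{P:Phi-flat-homot-isom}) to transfer boundedness across the diagram. First, I would derive the identity
\[
\bigl|K^{\Omega_1}_{p,\lambda_p,\Gamma}(z',w')\bigr| = \frac{|\det\Phi'(z')|\,|\det\Phi'(w')|^{p-1}}{|\det A|}\,\bigl|K^{\Omega_2}_{p,1}(\Phi(z'),\Phi(w'))\bigr|.
\]
The verification uses Proposition \ref{P:Gamma-inv-allowable-indices} to parametrize the $\Gamma$-invariant monomials as $e_{(\alpha+\one)A-\one} = \frac{1}{\det A}\Phi^\sharp(e_\alpha)$ for $\alpha\in \cs_p(\Omega_2)$, combined with the norm computation $\|e_{(\alpha+\one)A-\one}\|_{p,\lambda_p}^p = \frac{|\Gamma|}{|\det A|^p}\|e_\alpha\|_p^p = \frac{1}{|\det A|^{p-1}}\|e_\alpha\|_p^p$ that comes from \eqref{E:homothetic-scaling} and $|\Gamma|=|\det A|$. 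Every summand in $K^{\Omega_1}_{p,\lambda_p,\Gamma}(z',w')$ picks up the common factor $\det\Phi'(z')\overline{\det\Phi'(w')}|\det\Phi'(w')|^{p-2}/|\det A|$, which factors out of the series and yields the identity above upon taking absolute values.

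Next I would verify the commutation identity \eqref{E:AbsMBO-and-Phi_flat} on the dense subspace $C_c(\Omega_2)$ by direct computation. Starting from $\Phi_\flat((\bm{P}^{\Omega_2}_{p,1})^+f)(z')=|\det\Phi'(z')|\int_{\Omega_2}|K^{\Omega_2}_{p,1}(\Phi(z'),w)|f(w)\,dV(w)$, change variables $w=\Phi(w')$ using that $\Phi$ is $|\Gamma|$-to-one with real Jacobian $|\det\Phi'|^2$, substitute the kernel identity, and use $|\Gamma|=|\det A|$ to absorb all prefactors. On the other side, $(\bm{P}^{\Omega_1}_{p,\lambda_p,\Gamma})^+(\Phi_\flat f)(z')$ unpacks directly using $(\Phi_\flat f)(w')=f(\Phi(w'))|\det\Phi'(w')|$ and $\lambda_p=|\det\Phi'|^{2-p}$. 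Both expressions collapse to $\int_{\Omega_1}|K^{\Omega_1}_{p,\lambda_p,\Gamma}(z',w')|f(\Phi(w'))|\det\Phi'(w')|^{3-p}\,dV(w')$, proving the diagram commutes on $C_c(\Omega_2)$.

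For the equivalence, once the commutation holds on a dense subspace, the implication (1)$\Rightarrow$(2) follows by composing with the homothetic isomorphism $\Phi_\flat$: the bounded extension of $(\bm{P}^{\Omega_2}_{p,1})^+$ conjugates to a bounded operator on $[L^p(\Omega_1,\lambda_p)]_\Gamma$, which one checks agrees with the integral operator $(\bm{P}^{\Omega_1}_{p,\lambda_p,\Gamma})^+$ on a dense subset of $[L^p(\Omega_1,\lambda_p)]_\Gamma$. For the reverse direction (2)$\Rightarrow$(1), run the same argument using $\Phi_\flat^{-1}$; since every $F\in [L^p(\Omega_1,\lambda_p)]_\Gamma$ is of the form $\Phi_\flat f$ for a unique $f\in L^p(\Omega_2)$, the commutation relation written as $(\bm{P}^{\Omega_2}_{p,1})^+=\Phi_\flat^{-1}\circ (\bm{P}^{\Omega_1}_{p,\lambda_p,\Gamma})^+\circ\Phi_\flat$ immediately furnishes the bounded extension.

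The main obstacle I anticipate is the bookkeeping in Step 1: carefully tracking phases, absolute values and the constants $\det A$ versus $|\det A|$ versus $|\Gamma|$ throughout, and in particular verifying that the same $\det\Phi'(z')\overline{\det\Phi'(w')}|\det\Phi'(w')|^{p-2}$ factor indeed emerges \emph{uniformly across all summands} so that it can be pulled out of the series. A secondary subtlety is confirming that $(\bm{P}^{\Omega_1}_{p,\lambda_p,\Gamma})^+$ preserves the $\flat$-invariant subspace $[L^p(\Omega_1,\lambda_p)]_\Gamma$; this should follow from the invariance of the $\Gamma$-invariant subkernel under the action of $\Gamma$ in either variable, which in turn is a consequence of its construction from $\sharp$-invariant monomials.
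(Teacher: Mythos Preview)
Your proposal is correct and follows essentially the same route as the paper. The kernel identity you aim to derive in Step~1 is precisely the absolute-value version of the paper's Proposition~\ref{P:MBK-proper-trans-law}, obtained by the same mechanism (reparametrizing the sum via Proposition~\ref{P:Gamma-inv-allowable-indices} and using the norm scaling \eqref{E:homothetic-scaling}), and your Steps~2 and~3 mirror the paper's change-of-variables computation on $C_c(\Omega_2)$ followed by the density argument using $\Phi_\flat(C_c(\Omega_2))=[C_c(\Omega_1)]_\Gamma$.
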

The following kernel transformation formula can be thought of as a generalization of the classical biholomorphic transformation formula for the Bergman kernel.
\begin{proposition}\label{P:MBK-proper-trans-law}
The Monomial Basis Kernel admits the transformation law
\begin{equation}\label{E:MBK-proper-trans-law}
    K^{\Omega_1}_{p,\lambda_p,\Gamma}(z,w) = \frac{1}{|\Gamma|} \det \Phi'(z) \cdot K^{\Omega_2}_{p,1}(\Phi(z),\Phi(w)) \cdot \frac{|\det \Phi'(w)|^p}{\det \Phi'(w)}.
\end{equation}
\end{proposition}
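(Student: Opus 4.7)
The plan is to prove \eqref{E:MBK-proper-trans-law} by a direct term-by-term comparison, reindexing the sum defining $K^{\Omega_1}_{p,\lambda_p,\Gamma}$ via the bijection supplied by Proposition~\ref{P:Gamma-inv-allowable-indices}. Concretely, Proposition~\ref{P:Gamma-inv-allowable-indices} identifies each $\beta \in \cs_p^\Gamma(\Omega_1,\lambda_p)$ with a unique $\alpha \in \cs_p(\Omega_2)$ such that
\[
e_\beta \;=\; \tfrac{1}{\det A}\,\Phi^\sharp(e_\alpha) \;=\; \tfrac{1}{\det A}\,(e_\alpha \circ \Phi)\cdot \det\Phi'.
\]
So the game is to express every factor in the $\beta$-th summand of $K^{\Omega_1}_{p,\lambda_p,\Gamma}(z,w)$ in terms of the $\alpha$-th summand of $K^{\Omega_2}_{p,1}(\Phi(z),\Phi(w))$ and then sum.

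First I would evaluate the three ingredients. The numerator factor $e_\beta(z) = (\det A)^{-1} e_\alpha(\Phi(z))\,\det\Phi'(z)$ is immediate. For the conjugate factor, using $\chi_p^* e_\beta(w) = e_\beta(w)\abs{e_\beta(w)}^{p-2}$ from \eqref{eq-chistaralpha}, a direct computation gives
\[
\overline{\chi_p^* e_\beta(w)}
\;=\;\frac{1}{\det A \cdot \abs{\det A}^{p-2}}\;\overline{\chi_p^*e_\alpha(\Phi(w))}\cdot \overline{\det\Phi'(w)}\,\abs{\det\Phi'(w)}^{p-2},
\]
where I use that $\det A$ is a (real) integer so $\overline{\det A} = \det A$. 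For the normalizing denominator, the homothetic property \eqref{E:Phi-sharp-holomorphic-homothetic-isomorphism}--\eqref{E:homothetic-scaling} gives
\[
\norm{e_\beta}_{p,\lambda_p}^p \;=\; \frac{1}{\abs{\det A}^p}\,\norm{\Phi^\sharp(e_\alpha)}_{p,\lambda_p}^p \;=\; \frac{\abs{\Gamma}}{\abs{\det A}^p}\,\norm{e_\alpha}_{p,1}^p \;=\; \frac{1}{\abs{\det A}^{p-1}}\,\norm{e_\alpha}_{p,1}^p,
\]
since $\abs{\Gamma} = \abs{\det A}$.

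Then I would assemble the $\beta$-th summand and collect constants. The accumulated scalar factor is
\[
\frac{1}{\det A}\cdot\frac{1}{\det A\cdot \abs{\det A}^{p-2}}\cdot \abs{\det A}^{p-1} \;=\; \frac{\abs{\det A}}{(\det A)^2} \;=\; \frac{1}{\abs{\det A}} \;=\; \frac{1}{\abs{\Gamma}},
\]
again using that $\det A$ is real so $(\det A)^2 = \abs{\det A}^2$. The remaining $w$-dependent geometric factor simplifies by the identity $\overline{\det\Phi'(w)}\,\abs{\det\Phi'(w)}^{p-2} = \abs{\det\Phi'(w)}^p/\det\Phi'(w)$. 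Putting it together:
\[
\frac{e_\beta(z)\,\overline{\chi_p^*e_\beta(w)}}{\norm{e_\beta}_{p,\lambda_p}^p}
\;=\; \frac{\det\Phi'(z)}{\abs{\Gamma}}\cdot \frac{e_\alpha(\Phi(z))\,\overline{\chi_p^*e_\alpha(\Phi(w))}}{\norm{e_\alpha}_{p,1}^p}\cdot \frac{\abs{\det\Phi'(w)}^p}{\det\Phi'(w)}.
\]
Finally, summing over $\alpha\in\cs_p(\Omega_2)$ and recognizing the middle factor as the $\alpha$-th summand of $K^{\Omega_2}_{p,1}(\Phi(z),\Phi(w))$ yields \eqref{E:MBK-proper-trans-law}.

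There is no serious analytic obstacle here; the whole proof is bookkeeping, and the only delicate point is keeping track of the conjugations and absolute values to see that the three contributions to the constant, namely $(\det A)^{-2}$ from the product $e_\beta\cdot\overline{\chi_p^*e_\beta}$, the factor $\abs{\det A}^{-(p-2)}$ from $\abs{e_\beta}^{p-2}$, and the factor $\abs{\det A}^{p-1}$ from the norm, collapse cleanly to $1/\abs{\Gamma}$. Convergence of the reindexed sum is automatic from the local normal convergence established in Theorem~\ref{thm-MBK1} applied to both $\Omega_1$ and $\Omega_2$.
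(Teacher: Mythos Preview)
Your proof is correct and follows essentially the same approach as the paper: a term-by-term comparison of the two series via the bijection $\alpha\leftrightarrow\beta$ of Proposition~\ref{P:Gamma-inv-allowable-indices}, together with the homothetic scaling \eqref{E:homothetic-scaling}. The only cosmetic differences are that the paper runs the computation in the opposite direction (starting from $K^{\Omega_2}_{p,1}(\Phi(z),\Phi(w))$) and avoids your explicit $\det A$ bookkeeping by observing that the expression $f(z)\overline{f(w)}\abs{f(w)}^{p-2}/\norm{f}^p$ is invariant under $f\mapsto cf$, so the reindexing $\Phi^\sharp(e_\alpha)=(\det A)\,e_\beta$ passes through without any constant to track.
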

\begin{proof}
Starting from the series representation for $K^{\Omega_2}_{p,1}(z,w)$ in \eqref{E:MBK-alt-form}, we have
\begin{align}
    K_{p,1}^{\Omega_2}(\Phi(z),\Phi(w)) &= \sum_{\alpha\in\cs_p(\Omega_2)} \frac{e_\alpha(\Phi(z))\ol{e_\alpha(\Phi(w))} | e_\alpha(\Phi(w))|^{p-2} }{\norm{e_\alpha}^p_{L^p(\Omega_2)}} \notag \\
    &= |\Gamma| \sum_{\alpha\in\cs_p(\Omega_2)} \frac{e_\alpha(\Phi(z))\ol{e_\alpha(\Phi(w))} | e_\alpha(\Phi(w))|^{p-2} }{\norm{\Phi^\sharp(e_\alpha)}^p_{L^p(\Omega_1,\lambda_p)}}, \label{E:MBK-comp1}
\end{align}
since by \eqref{E:homothetic-scaling}, the homothetic isomorphism $\Phi^\sharp$ scales norms uniformly for each $f \in L^p(\Omega_2)$ by
$|\Gamma|\cdot\norm{f}_{L^p(\Omega_2)}^p = \norm{\Phi^\sharp(f)}^p_{L^p(\Omega_1,\lambda_p)}$.  Now use the definition of $\Phi^\sharp$ to write
\begin{align}
    \eqref{E:MBK-comp1} &= |\Gamma| \frac{\det \Phi'(w)}{\det \Phi'(z)|\det \Phi'(w)|^p} \sum_{\alpha\in\cs_p(\Omega_2)} \frac{\Phi^\sharp (e_\alpha)(z)\ol{\Phi^\sharp(e_\alpha)(w)} | \Phi^\sharp(e_\alpha)(w)|^{p-2} }{\norm{\Phi^\sharp(e_\alpha)}^p_{L^p(\Omega_1,\lambda_p)}} \notag \\ 
    &= |\Gamma| \frac{\det \Phi'(w)}{\det \Phi'(z)|\det \Phi'(w)|^p} \sum_{\beta\in\cs_p^\Gamma(\Omega_1,\lambda_p)} \frac{e_\beta(z)\ol{e_\beta(w)}|e_\beta(w)|^{p-2}}{\norm{e_\beta}^p_{L^p(\Omega_1,\lambda_p)}} \label{E:MBK-comp2} \\
    &= |\Gamma| \frac{\det \Phi'(w)}{\det \Phi'(z)|\det \Phi'(w)|^p} \cdot K^{\Omega_1}_{p,\lambda_p,\Gamma}(z,w). \label{E:MBK-comp3}
\end{align}
Equation \eqref{E:MBK-comp2} follows from Proposition \ref{P:Gamma-inv-allowable-indices}, and \eqref{E:MBK-comp3} follows from the definition of the $\Gamma$-invariant MBK given in \eqref{E:Gamma-inv-MBK}.
This completes the proof.
\end{proof}

\begin{proof}[Proof of Theorem~\ref{T:Abs-boundedness-equivalence}]
Proposition \ref{P:Phi-flat-homot-isom} and \eqref{E:Phi_flat-norm-equivalence} show that $\Phi_\flat: L^p(\Omega_2) \to L^p(\Omega_1,\lambda_p)_\Gamma$ is a homothetic isomorphism with $\norm{\Phi_\flat f}^p_{L^p(\Omega_1,\lambda_p)} = |\Gamma| \norm{f}^p_{L^p(\Omega_2)}$. Now for $f \in C_c(\Omega_2)$,
\begin{align}
    \Phi_\flat \circ (\bm{P}^{\Omega_2}_{p,1})^+ f (z) &= |\det \Phi'(z)| \int_{\Omega_2}  \abs{K^{\Omega_2}_{p,1}(\Phi(z),w)} f(w)\,dV(w) \notag\\
    &= \frac{|\det \Phi'(z)|}{|\Gamma| } \int_{\Omega_1} \abs{K^{\Omega_2}_{p,1}(\Phi(z),\Phi(w))}f(\Phi(w))  \cdot |\det \Phi'(w)|^2 \,dV(w) \notag \\
    &= \int_{\Omega_1} \abs{K^{\Omega_1}_{p,\lambda_p,\Gamma}(z,w)}\Phi_\flat f(w) \lambda_p(w)\,dV(w) \label{E:AbsMBO-kernel-trans-formula}\\
    &= (\bm{P}^{\Omega_1}_{p,\lambda_p,\Gamma})^+ \circ \Phi_\flat f (z) \notag.
\end{align}
Equality in \eqref{E:AbsMBO-kernel-trans-formula} uses the kernel transformation law \eqref{E:MBK-proper-trans-law}, and the final line makes sense since the properness of $\Phi$ guarantees $\Phi_\flat f \in \left[C_c(\Omega_1)\right]_\Gamma$.
The fact that $C_c(\Omega_2)$ is dense in $ L^p(\Omega_2)$, along with the fact that its image $\Phi_\flat\left(C_c(\Omega_2)\right) = \left[C_c(\Omega_1)\right]_\Gamma$ is dense in $\left[L^p(\Omega_1,\lambda_p) \right]_\Gamma$ shows that statements $(1)$ and $(2)$ are equivalent.  
When these statements hold, equation \eqref{E:AbsMBO-and-Phi_flat} and Diagram \eqref{Diagram-absoulte-MBP} follow immediately.
\end{proof}


\section{Monomial Polyhedra}\label{S:MonomialPolyhedra}

In this section we prove Theorem \ref{T:MBP-AbsBoundedness}, which says that if $\Uu$ is a monomial polyhedron and $1<p<\infty$, the Monomial Basis Projection of $A^p(\Uu)$ is absolutely bounded.
As discussed in Section~\ref{SS:Intro-monomial-polyhedra}, this stands in contrast with the limited $L^p$-regularity of the Bergman projection.

\subsection{Matrix representation}\label{SS:matrix-representation}

We denote the spaces of  row and column vectors with integer entries by $\Z^{1 \times n}$ and $\Z^{n \times 1}$, respectively. 
Suppose $B = (b_k^j ) \in M_{n \times n}(\Z)$ is a matrix of integers with $\det B\not=0$, with rows written as $b^j = (b^j_1,\dots, b^j_n) \in \Z^{1 \times n}$. 
Define 
\begin{equation}\label{eq-udef}
\mathscr{U}_B = \left\{z\in\cx^n:  |e_{b^j}(z)|  < 1, \quad 1\leq j \leq n  \right\},
\end{equation}
and call it the  monomial polyhedron associated to the matrix $B$, provided it is bounded.
This gives a compact notation for the domains  defined in Section~\ref{SS:Intro-monomial-polyhedra}

The matrix $B$ in \eqref{eq-udef} is far from unique. 
If $B'$ is obtained from $B$ by permuting rows or by multiplying any row by a positive integer, then $\Uu_B=\Uu_{B'}$. 
We recall the following observation, originally proved in \cite[Proposition~3.2]{bcem}:
\begin{proposition}\label{P:ConditionsOnMatrixB}
Suppose that $\Uu_B$ is a bounded monomial polyhedron as in \eqref{eq-udef}, where $\det B \neq 0 $.  
Without loss of generality we may assume
\begin{enumerate}
\item $\det{B} > 0$. \label{E:detB-positive}
\item each entry in the inverse matrix $B^{-1}$ is nonnegative. \label{E:labels-Binv-nonneg}
\end{enumerate}
\end{proposition}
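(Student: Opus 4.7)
The plan is to derive (2) directly from the boundedness hypothesis via the logarithmic shadow, and then obtain (1) by an allowable row-swap.

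First I will pass to the log shadow. For $z\in \mathscr{U}_B$ with all coordinates nonzero, the defining inequalities $|e_{b^j}(z)|<1$ become, upon setting $x_k=\log|z_k|$, the componentwise condition $Bx<0$. So the log shadow of $\mathscr{U}_B\cap (\cx^*)^n$ is the open cone $\{x\in \rl^n : Bx<0\}$, and the substitution $y=-Bx$ (available because $B$ is invertible) lets $y$ range over the whole open positive orthant $\rl^n_{>0}$ while $x=-B^{-1}y$.

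For (2): boundedness of $\mathscr{U}_B$ forces each coordinate $x_i=\log|z_i|$ to be bounded above on the log shadow. Since $x_i=-\sum_{j}(B^{-1})_{ij}\,y_j$ with $y_j>0$ arbitrary, such an upper bound can exist if and only if $(B^{-1})_{ij}\ge 0$ for every $j$: a single negative entry would let the ray $y=t\,e_j$, $t\to\infty$, drive $x_i\to +\infty$. Thus boundedness already forces every entry of $B^{-1}$ to be nonnegative, with no modification of $B$ required.

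For (1), the remark just after \eqref{eq-udef} gives $\mathscr{U}_B=\mathscr{U}_{PB}$ for any permutation matrix $P$. Swapping two rows multiplies $\det B$ by $-1$; simultaneously this permutes the columns of $B^{-1}$ and so preserves the nonnegativity of its entries. Hence if $\det B<0$, a single row swap will yield a matrix satisfying both (1) and (2) at once. I do not anticipate a real obstacle here; the only delicate point is keeping the two normalizations compatible, which the column-permutation observation above handles automatically.
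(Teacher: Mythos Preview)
Your argument is correct. The paper itself does not supply a proof of this proposition; it merely records it as ``originally proved in \cite[Proposition~3.2]{bcem}.'' Your log-shadow computation is the natural argument and is presumably what appears in that reference: boundedness of $\mathscr{U}_B$ forces $(B^{-1})_{ij}\ge 0$ automatically (so (2) needs no modification of $B$), and a single row swap then arranges $\det B>0$ while only permuting the columns of $B^{-1}$.

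One small technical point worth tightening: the ray $y=te_j$ lies on the \emph{boundary} of the open orthant $\rl^n_{>0}$, so the corresponding $x=-B^{-1}(te_j)$ satisfies $(Bx)_k=0$ for $k\neq j$ and hence does not literally belong to the open log shadow $\{Bx<0\}$. The fix is immediate---take instead $y=te_j+\varepsilon\one$ with $\varepsilon>0$ small, which lies in the open orthant and still drives $x_i\to+\infty$ as $t\to\infty$ whenever $(B^{-1})_{ij}<0$.
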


Given the monomial polyhedron $\Uu_B$, we will assume for the rest of the paper that $B$ satisfies both properties \eqref{E:detB-positive} and \eqref{E:labels-Binv-nonneg} of Proposition \ref{P:ConditionsOnMatrixB}.
Observe that Cramer's rule combined with property \eqref{E:labels-Binv-nonneg} says that the adjugate $A = (\det B) B^{-1}$ is a matrix of nonnegative integers.

The following representation of monomial polyhedra as quotients was first proved in \cite[Theorem~3.12]{bcem}.

\begin{proposition}\label{P:covering-mono-polyh}
Let  $A = (\det B) B^{-1}\in M_{n \times n}(\Z)$. 
There exists a product domain 
\begin{equation}\label{eq-Omegaproduct}
\Omega = U_1 \times \dots \times U_n \subset \C^n,
\end{equation}
each factor $U_j$ either a unit disc $\D$ or a unit punctured disc $\D^*$, such that the monomial map $\Phi_{A}: \C^n \to \C^n$ of \eqref{D:monomial-map} restricts to a proper holomorphic  map $\Phi_A:\Omega \to \Uu_B$.  
This map is of quotient type with respect to group $\Gamma_A$, which is given in \eqref{eq-sigmanu}.
\end{proposition}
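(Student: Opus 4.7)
The plan is to exploit the matrix identity $BA = (\det B)\,I$, which follows from $A = (\det B)B^{-1}$, to describe $\Phi_A^{-1}(\Uu_B)$ explicitly. First I would record the key computation: for every $z$ at which the expressions below are defined,
\begin{equation*}
e_{b^j}(\Phi_A(z)) = \prod_{k=1}^n (z^{a^k})^{b^j_k} = z^{\sum_{k} b^j_k a^k} = z_j^{\det B},
\end{equation*}
since $\sum_k b^j_k a^k$ is the $j$-th row of $BA = (\det B)I$. Consequently, $\Phi_A(z) \in \Uu_B$ forces $|z_j|^{\det B} < 1$, so any preimage lies in the unit polydisc.

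Next I would take $\Omega = \Phi_A^{-1}(\Uu_B)$ and identify its product structure. Since $\Uu_B$ is Reinhardt and $\Phi_A$ is monomial, $\Omega$ is Reinhardt. Whether $z_j = 0$ is compatible with membership in $\Omega$ depends only on $j$: setting $z_j = 0$ with all other coordinates nonzero forces $\Phi_A(z)_k = z^{a^k}$ to vanish precisely for those $k$ with $a^k_j > 0$, and this is compatible with $\Phi_A(z) \in \Uu_B$ if and only if for every such $k$ the $k$-th column of $B$ is nonnegative (keeping each $e_{b^i}$ bounded as $w_k \to 0$). Declaring $U_j = \D$ when this condition holds and $U_j = \D^*$ otherwise then yields the product decomposition $\Omega = U_1 \times \cdots \times U_n$. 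Surjectivity of $\Phi_A:\Omega \to \Uu_B$ is obtained on $(\C^*)^n$ by inverting the monomial map explicitly via branches of $e_{b^j}(w)^{1/\det B}$, and extended to the coordinate strata by separately checking each stratum using the compatibility criterion above.

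Properness is then straightforward: if $\{z^{(m)}\} \subset \Omega$ escapes every compact subset of $\Omega$, then either $|z^{(m)}_j| \to 1$ for some $j$, forcing $|e_{b^j}(\Phi_A(z^{(m)}))| = |z^{(m)}_j|^{\det B} \to 1$, or $z^{(m)}_j \to 0$ for some $j$ with $U_j = \D^*$, forcing $\Phi_A(z^{(m)})$ toward a coordinate hyperplane excluded from $\Uu_B$; in either case $\Phi_A(z^{(m)})$ leaves every compact subset of $\Uu_B$. For the quotient-type structure I would put $Z_2 = \Uu_B \cap \bigcup_k \{w_k = 0\}$ and $Z_1 = \Phi_A^{-1}(Z_2)$, both closed analytic subvarieties of lower dimension. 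Off this locus, $\Phi_A$ becomes a holomorphic map between open subsets of $(\C^*)^n$ that coincides with the endomorphism of the complex torus induced by $A$, which is a Galois covering of degree $|\det A|$ whose deck group is precisely $\Gamma_A$ of \eqref{eq-sigmanu}; thus $\Gamma_A$ acts freely and transitively on the fibers of $\Phi_A:\Omega\setminus Z_1 \to \Uu_B\setminus Z_2$. That $\Gamma_A$ preserves $\Omega$ is immediate since it acts by rotations in each coordinate.

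The principal obstacle will be the bookkeeping showing that admissibility of $z_j = 0$ in $\Omega$ depends on $j$ alone, as this is what secures the product form of $\Omega$; it relies on the interplay between the sign patterns of $A$ and $B$ recorded in Proposition~\ref{P:ConditionsOnMatrixB}. Once this product structure is in hand, properness and the quotient-type conclusion follow directly from the computations above.
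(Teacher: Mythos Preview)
The paper does not give its own proof of this proposition; it simply cites \cite[Theorem~3.12]{bcem}. Your outline is a correct reconstruction of that argument, and the key identity $e_{b^j}(\Phi_A(z)) = z_j^{\det B}$ (coming from $BA = (\det B)I$) is exactly the right starting point.

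One remark that streamlines your sketch: the paper has already recorded in Section~\ref{SS:MonomialMaps}, item~(2), that for $A$ a nonnegative integer matrix with $\det A \neq 0$, the map $\Phi_A:\C^n\to\C^n$ is proper holomorphic and of quotient type with respect to $\Gamma_A$. Once you set $\Omega = \Phi_A^{-1}(\Uu_B)$, properness and the quotient-type structure for $\Phi_A:\Omega\to\Uu_B$ follow immediately by restriction to a saturated open set, so you need not re-argue them from scratch. The genuine content of Proposition~\ref{P:covering-mono-polyh} is therefore exactly what you flagged as the principal obstacle: showing that $\Phi_A^{-1}(\Uu_B)$ is a product $U_1\times\cdots\times U_n$ with each $U_j\in\{\D,\D^*\}$. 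Your criterion---$U_j=\D$ iff every column of $B$ indexed by $\{k:a^k_j>0\}$ is nonnegative---is correct, and the observation that admissibility of a set $S$ of vanishing coordinates is the conjunction of the individual conditions (since the relevant set of $k$'s is $\bigcup_{j\in S}\{k:a^k_j>0\}$) is what secures the product form.
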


The conditions of Section~\ref{SS:gen-hypothesis} are satisfied, if we take  $\Omega_1=\Omega$, $\Omega_2=\Uu_B$, and $A, \Phi_A, \Gamma_A$ as above in Proposition~\ref{P:covering-mono-polyh}.  
In the present situation, the source domain $\Omega_1=\Omega$ is a product and the weight $\lambda_p=\abs{\det\Phi_A'}^{2-p}$ of \eqref{E:lambda_p-monomial-map} admits a tensor product structure:
\begin{equation}\label{eq-lambdap-monopoly}
\lambda_p(\zeta) = \abs{\det \Phi_A'(\zeta)}^{2-p} = (\det A)^{2-p} \prod_{j=1}^n \mugj(\zeta_j),
\end{equation}
where $\mugj$ is the weight on $U_j$ given by
\begin{equation}\label{E:component-weights}
\mugj(z) = |z|^{\gamma_j}, \qquad \mathrm{where} \qquad \gamma_j = (\one \cdot a_j -1)(2-p),
\end{equation}
$\one \in \Z^{1 \times n}$ is the row vector with $1$ in each component and $a_j \in \Z^{n \times 1}$ the $j$-th column of $A$.
We can remove the absolute value from $\det A$ since $\det A = (\det B)^n\cdot \frac{1}{\det B} = \det B^{n-1}>0$.

\subsection{Absolute boundedness of the Monomial Basis Projection}\label{SS:Lp-bddness-AMBO}
We now give a decomposition of the the $\Gamma$-invariant subkernel defined in \eqref{E:Gamma-inv-MBK}. 

\begin{proposition}\label{P:Gamma-inv-MBK-decomp}
Let $d = \det A$ (a positive integer). 
The $\Gamma$-invariant subkernel defined in \eqref{E:Gamma-inv-MBK} admits the decomposition
\begin{equation}\label{E:Gamma-inv-MBK-decomp1}
K^\Omega_{p,\lambda_p,\Gamma}(z,w) = \sum_{i=1}^{d^{n-1}} K_i(z,w),
\end{equation}
where each $K_i$ is a tensor product of $n$ arithmetic progression subkernels defined in \eqref{D:arith-prog-kern}:
\begin{equation}\label{E:Gamma-inv-MBK-decomp2}
K_i(z,w) = d^{p-2} \prod_{j=1}^n k^{U_j}_{p,\gamma_j,\alpha_{i,j},d}(z_j,w_j),
\end{equation}
where the $\gamma_j$ is determined by \eqref{E:component-weights} and $\alpha_{i,j}\in \Z/d\Z$ is determined by the group $\Gamma$.
\end{proposition}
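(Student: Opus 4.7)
The plan is to exploit the product structure of $\Omega$ together with a coset argument identifying the $\Gamma$-invariance condition with a congruence modulo $d$. First, since $\Omega = U_1 \times \cdots \times U_n$ and $\lambda_p(\zeta) = d^{2-p}\prod_{j=1}^n \mugj(\zeta_j)$ by \eqref{eq-lambdap-monopoly}, Fubini's theorem gives the factorization
\[
\norm{e_\alpha}_{p,\lambda_p}^p = d^{2-p}\prod_{j=1}^n\norm{e_{\alpha_j}}_{p,\mugj}^p,
\]
while the products $e_\alpha(z)\overline{\chi_p^*e_\alpha(w)}$ factor tautologically across coordinates. Consequently, each summand in the series \eqref{E:Gamma-inv-MBK} defining $K^\Omega_{p,\lambda_p,\Gamma}$ equals $d^{p-2}$ times a product of one-variable analogs, and moreover $\cs_p(\Omega,\lambda_p) = \prod_{j=1}^n \cs_p(U_j, \mugj)$.

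Next, I would characterize the $\Gamma$-invariance condition as a lattice condition. By \eqref{eq-invariantmonomials}, membership $\alpha \in \cs_p^\Gamma(\Omega,\lambda_p)$ is equivalent to $\alpha \in \cs_p(\Omega,\lambda_p)$ together with $\alpha + \one \in \Z^{1 \times n} A$. Since $A = (\det B)B^{-1}$, the matrix $dA^{-1} = B$ is integral, so $d\Z^{1 \times n} = \Z^{1 \times n}(dA^{-1})A \subset \Z^{1 \times n}A$. The indices $[\Z^{1 \times n} : \Z^{1 \times n}A] = \det A = d$ and $[\Z^{1 \times n} : d\Z^{1 \times n}] = d^n$ together force $[\Z^{1 \times n}A : d\Z^{1 \times n}] = d^{n-1}$. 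In particular the $\Gamma$-invariance condition depends only on $\alpha \bmod d$, and there are exactly $d^{n-1}$ residue tuples $(\alpha_{i,1},\ldots,\alpha_{i,n}) \in (\Z/d\Z)^n$ satisfying it, indexed by $i=1,\ldots,d^{n-1}$.

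Finally, I partition the sum in \eqref{E:Gamma-inv-MBK} by the residue class of $\alpha$ mod $d$. For each fixed $i$, the set of $\alpha \in \cs_p^\Gamma(\Omega,\lambda_p)$ with $\alpha_j \equiv \alpha_{i,j} \pmod d$ for all $j$ is precisely the Cartesian product $\prod_{j=1}^n \ca(U_j, p, \gamma_j, \alpha_{i,j}, d)$ in the notation of \eqref{E:arithmetic-progression_A_{a,b}}, because the congruence already implies $\alpha + \one \in \Z^{1 \times n}A$. Since the series converges absolutely (being a subseries of the locally normally convergent MBK from Theorem~\ref{thm-MBK1}, applied to the pseudoconvex Reinhardt product $\Omega$), Fubini separates the variables:
\[
\sum_{\alpha} \prod_{j=1}^n \frac{e_{\alpha_j}(z_j)\overline{\chi_p^*e_{\alpha_j}(w_j)}}{\norm{e_{\alpha_j}}_{p,\mugj}^p} = \prod_{j=1}^n k^{U_j}_{p,\gamma_j,\alpha_{i,j},d}(z_j,w_j),
\]
where the sum on the left runs over $\alpha \in \prod_j \ca(U_j, p, \gamma_j, \alpha_{i,j}, d)$. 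Absorbing the factor $d^{p-2}$ into each term delivers $K_i(z,w)$, and summing over $i$ produces the decomposition.

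The main hurdle is the coset count: once $d\Z^{1 \times n} \subset \Z^{1 \times n}A$ is noted, the enumeration of invariance classes follows from the multiplicativity of lattice indices. The remaining manipulations are routine bookkeeping on a convergent series.
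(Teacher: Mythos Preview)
Your approach is essentially identical to the paper's: the same lattice-coset argument to show $[\Z^{1\times n}A : d\Z^{1\times n}] = d^{n-1}$, the same Fubini factorization of norms and index sets, and the same rearrangement of an absolutely convergent series into a product of one-variable arithmetic-progression subkernels. One small slip: the claim $dA^{-1}=B$ is not literally correct (in fact $dA^{-1}=(\det B)^{n-2}B$), but your needed conclusion that $dA^{-1}$ is integral still holds since $dA^{-1}=\adj A$, so the containment $d\Z^{1\times n}\subset \Z^{1\times n}A$ and the rest of the argument go through unchanged.
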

\begin{proof}
Following \eqref{E:Gamma-inv-MBK}, the $\Gamma$-invariant subkernel $K_{p,\lambda_p,\Gamma}^{\Omega}(z,w)$ is found by summing over the $p$-allowable indices, $\Gamma$-invariant in the $\sharp$ sense.  From \eqref{E:gamma-inv-allowable-indices}, this set can be written as
\begin{align}
\cs_p^\Gamma(\Omega,\lambda_p) &= \{\alpha \in \cs_p(\Omega,\lambda_p): \sigma^\sharp(e_\alpha) = e_\alpha \textrm{ for all } \sigma \in \Gamma  \} = \cs_p(\Omega,\lambda_p)\cap [\Z^n]^\Gamma \label{eq-zngamma},
\end{align}
where  $[\Z^n]^\Gamma$ is defined to be the subset of $\Z^{1\times n}$ consisting of exactly those indices for which the corresponding monomials are $\Gamma$-invariant, i.e.,
\[
[\Z^n]^\Gamma=\{\alpha\in \Z^{1\times n}: \sigma^\sharp(e_\alpha) = e_\alpha \textrm{ for all } \sigma \in \Gamma\}. 
\]
By \eqref{eq-invariantmonomials}, we see that
$[\Z^n]^\Gamma = \{\alpha\in \Z^{1\times n}:\alpha = \beta A -\one,\, \beta \in \Z^{1\times n}\}$, so after translating by $\one$, we have 
\[
[\Z^n]^\Gamma+\one=\Z^{1\times n}A= \{\beta A: \beta\in \Z^{1\times n}\} \subset \Z^{1\times n}.
\]
We make two observations: 
first, it is known (see Lemma 3.3 of \cite{NagPra21}) that $\Z^{1\times n}A$ is a sublattice of $\Z^{1 \times n}$ with index 
\begin{equation*}
    \abs{ \Z^{1 \times n}/(\Z^{1\times n}A)} = \det A = d.
\end{equation*}
Second, we claim that $\Z^{1\times n}A$ contains $d\,\Z^{1 \times n} = \{d\beta: \beta\in \Z^{1 \times n}\}$ as a sublattice.
Consider a vector $v = d  y$, for some $y \in \Z^{1 \times n}$ and check that $v \in \Z^{1\times n}A$. 
Since $A$ is invertible, there is a solution $x \in \Q^{1 \times n}$ with $v = d y = x A$.  
Write $A$ in terms of its rows $a^1, \cdots, a^n\in \Z^{1\times n}$ as $A=[a^1, \cdots, a^n]^T.$
Cramer's rule shows the $j$-th component of $x$ is
\begin{align*}
x_j = \frac{\det\left([a^1,\cdots,a^{j-1},d y, a^{j+1},\cdots,a^n]^T\right)}{\det A} = \det\left([a^1,\cdots,a^{j-1},y, a^{j+1},\cdots,a^n]^T\right) \in \Z,
\end{align*}
confirming that $x \in \Z^{1 \times n}$, and therefore that $d \, \Z^{1 \times n}$ is a sublattice of $\Z^{1 \times n}A$.  

Since the index $\left|\Z^{1 \times n} / d \, \Z^{1 \times n} \right|=d^n$, the Third Isomorphism Theorem for groups says
\begin{equation*}
\left|\Z^{1 \times n}A / d \, \Z^{1 \times n} \right| = \frac{\left| \Z^{1 \times n} / d\,\Z^{1 \times n} \right|}{\left| \Z^{1 \times n}/ \Z^{1 \times n} A \right|} = d^{n-1}.
\end{equation*}
It now follows that we have a representation of the group $\Z^{1 \times n} A$ as a disjoint union of $d^{n-1}$ cosets of the subgroup $d\,\Z^{1 \times n}$, i.e., there are  $\ell^i\in  \Z^{1 \times n} A $, such that we have
\[ \Z^{1 \times n} A=[\Z^n]^\Gamma+\one= \bigsqcup_{i=1}^{d^{n-1}}(d\,\Z^{1 \times n}+\ell^i), \]
where  $\bigsqcup$ denotes disjoint union. Therefore, we have
\[    
[\Z^n]^\Gamma = \left(\bigsqcup_{i=1}^{d^{n-1}}(d\,\Z^{1 \times n}+\ell^i)\right)-\one
= \bigsqcup_{i=1}^{d^{n-1}}\left(d\,\Z^{1 \times n}+(\ell^i-\one)\right).
\]
Fix an $i, 1\leq i \leq d^{n-1}$ and write $\ell^i=(\ell^i_1,\dots, \ell^i_n)$ with $\ell^i_j\in \Z$. 
Then we have
\begin{align}
d\,\Z^{1 \times n}+(\ell^i-\one)&=\{(d\cdot \nu_1+\ell^i_1 -1, \dots, d\cdot \nu_n+\ell^i_n -1): \nu_1, \dots, \nu_n\in \Z\}\notag\\
&= \prod_{j=1}^n\{\alpha\in \Z: \alpha\equiv \ell^i_j-1 \mod d\}, \label{eq-prod1}
\end{align}
where in the last line we have the Cartesian product of $n$ sets of integers. 

We now analyze the other intersecting set $\cs_p(\Omega,\lambda_p)$ in \eqref{eq-zngamma}.
Let $\alpha\in \Z^n$. 
Combining the representation of $\lambda_p$ from \eqref{eq-lambdap-monopoly} with the fact that $e_\alpha(z)=\prod_{j=1}^n e_{\alpha_j}(z_j)$, we can write the norm of $e_\alpha$ on $\Omega$ in terms of the norms of the $e_{\alpha_j}$ on the factors $U_j$:
 \begin{equation}\label{eq-tonelli}
 \norm{e_\alpha}_{L^p(\Omega,\lambda_p)}^p
 = d^{2-p} \prod_{j=1}^n \norm{e_{\alpha_j}}_{L^p(U_j\mugj)}^p.
 \end{equation}
The left-hand side is finite, i.e., $\alpha\in \cs_p(\Omega, \lambda_p)$, if and only if each factor on the right-hand side is finite, i.e., for each $1\leq j \leq n$ we have $\alpha_j\in \cs_p(U_j, \mugj)$.
Consequently we obtain a Cartesian product representation of the set
\begin{equation}\label{eq-prod2}
\cs_p(\Omega, \lambda_p)=\prod_{j=1}^n \cs_p(U_j, \mugj).
\end{equation}
Therefore by \eqref{eq-zngamma}, we have
\[
\cs_p^\Gamma(\Omega,\lambda_p) 
= \cs_p(\Omega,\lambda_p)\cap 
\left(\bigsqcup_{i=1}^{d^{n-1}}\left((d\,\Z^{1 \times n}+\ell_i)-\one\right)\right)
= \bigsqcup_{i=1}^{d^{n-1}} \mathscr{L}_i,
\]
where 
\begin{align}
\mathscr{L}_i
&=  \cs_p(\Omega,\lambda_p) \cap \left((d\,\Z^{1 \times n}+\ell_i)-\one\right)
& \text{by definition}\notag\\
&= \left(\prod_{j=1}^n \cs_p(U_j, \mugj)\right)\bigcap \left(\prod_{j=1}^n\{\alpha\in \Z: \alpha\equiv \ell^i_j-1 \mod d\}\right)
&\text{ by \eqref{eq-prod1} and \eqref{eq-prod2}}\notag\\
&= \prod_{j=1}^n \left(\cs_p(U_j,\mugj)\cap
\{\alpha\in \Z: \alpha\equiv \ell^i_j-1 \mod d\} \right)\notag\\
&=  \prod_{j=1}^n \ca(U_j,p,\gamma_j,\ell^{i}_j-1,d), \label{eq-Li}
\end{align}
and the last equality follows from the definition \eqref{E:arithmetic-progression_A_{a,b}}.  
We now define
\begin{equation}\label{E:orthog-lattice-subkernel}
K_i(z,w) = \sum_{\alpha \in {\mathscr{L}}_i} \frac{e_\alpha(z) \ol{\chi_p^*e_\alpha(w)}} {\norm{e_\alpha}_{p,\lambda_p}^p},
\end{equation}
which immediately gives \eqref{E:Gamma-inv-MBK-decomp1}, since absolute convergence
permits rearrangement of the series defining $ K_{p,\lambda_p,\Gamma}^{\Omega_1}$.
Now from \eqref{eq-tonelli}, we see that for $\alpha\in \mathscr{L}_i$ we have
\begin{equation}\label{eq-tensorealpha}
\frac{e_\alpha(z) \ol{\chi_p^*e_\alpha(w)}} {\norm{e_\alpha}_{p,\lambda_p}^p} 
= d^{p-2} \prod_{j=1}^n \frac{e_{\alpha_j}(z_j) \ol{\chi_p^*e_{\alpha_j}(w_j)}} {\norm{e_{\alpha_j}}_{p,\mugj}^p},
\end{equation}
where for each $j$, we have $\alpha_j\in \ca(U_j,p,\gamma_j,\ell^{i}_j-1,d)$, and on the right hand side $\chi_p:\cx\to \cx$ is the one-dimensional version of the map \eqref{eq-chip}.
Using \eqref{eq-Li} and \eqref{eq-tensorealpha}, we can rearrange the sum \eqref{E:orthog-lattice-subkernel} as
\begin{align}
K_i(z,w) 
&= d^{p-2} \prod_{j=1}^n \left(\sum_{\alpha_j\in \ca(U_j,p,\gamma_j,\ell^{i}_j-1,d) }\frac{e_{\alpha_j}(z_j) \ol{\chi_p^*e_{\alpha_j}(w_j)}} {\norm{e_{\alpha_j}}_{p,\mugj}^p}\right) \label{eq-rearrange}\\
&= d^{p-2} \prod_{j=1}^n k^{U_j}_{p,\gamma_j,\ell^{i}_j-1,d}(z_j, w_j)\notag
\end{align}
where the rearrangement in \eqref{eq-rearrange} is justified since each of the $n$ factor series on the right hand side is absolutely convergent.
The final line is just the definition given in \eqref{D:arith-prog-kern}.
\end{proof}
\begin{proof}[Proof of Theorem \ref{T:MBP-AbsBoundedness}]
Theorem \ref{T:Abs-boundedness-equivalence} says $(\bm{P}^{\Uu}_{p,1})^+:L^p(\Uu) \to L^p(\Uu)$ is a bounded operator if and only if $(\bm{P}^{\Omega}_{p,\lambda_p,\Gamma})^+:[L^p(\Omega,\lambda_p)]_\Gamma \to [L^p(\Omega,\lambda_p)]_\Gamma$ is bounded.
From \eqref{E:Gamma-inv-MBK-decomp1}, we see that
\begin{equation}\label{E:bound-on-GammaInvMBP}
\abs{K_{p,\lambda_p,\Gamma}^{\Omega}(z,w)} \le \sum_{i=1}^{d^{n-1}} \abs{K_i(z,w)}.
\end{equation}
From formula \eqref{E:invariant-AMBO} defining the operator $(\bm{P}^{\Omega}_{p,\lambda_p,\Gamma})^+$, it would be sufficient to prove that for each $1 \leq i\leq n$, the operator
\[
f \mapsto \int_{\Omega} \abs{K_i(\cdot,w)} f(w) \lambda_p(w)dV(w)
\]
is bounded on (the full space) $L^p(\Omega,\lambda_p)$. 
Formula \eqref{E:Gamma-inv-MBK-decomp2} now gives
\[
\abs{K_i(z,w)} = d^{p-2} \prod_{j=1}^n \abs{k^{U_j}_{p,\gamma_j,\alpha_{i,j},d}(z_j,w_j)}.
\]
Proposition~\ref{P:abs-boundedness-arith-prog-operators} now says that for each $1\leq j \leq n$, there exist functions $\phi_j, \psi_j$ and constants $C_1^j, C_2^j$ such that 
\begin{align*}
&\int_{U_j} \abs{k^{U_j}_{p,\gamma_j,\alpha_{i,j},d}(z,w) } \psi_j(w)^q \mugj(w)\, dV(w) \le C^j_1 \phi_j(z)^q,  \\
&\int_{U_j} \phi_j(z)^p\abs{k^{U_j}_{p,\gamma_j,\alpha_{i,j},d}(z,w)}  \mugj(z)\, dV(z) \le C^j_2 \psi_j(w)^p.
\end{align*}
Proposition~\ref{P:Schurs-test} now finishes the proof.
\end{proof}


\section{Duality theory of Bergman spaces}\label{S:Duality}

\subsection{Properties of the twisting map}\label{SS:Props-of-twisting-map}
In this section, $\Omega$ will denote an arbitrary Reinhardt domain in $\C^n$.
We return now to the twisting map $\chi_p$ introduced in \eqref{eq-chip}, and use it to present a duality theory for Bergman spaces on Reinhardt domains.
This leads to a concrete description for all $1<p<\infty$ of the duals of the $A^p$-Bergman spaces when the Monomial Basis Projection is absolutely bounded; this is new on all monomial polyhedral domains (including the Hartogs triangle), and even new in the case of the punctured disc.

\begin{proposition}\label{P:props-of-twisting-map}
The twisting map $\chi_p:\C^n \to \C^n$ has the following properties.
\begin{enumerate}
\item \label{E:inverse-of-chip-is-chiq} It is a homeomorphism of $\cx^n$ with itself, and its inverse is the map $\chi_q$.
\item \label{item:def-of-eta_q} It is a  diffeomorphism away from the set $\bigcup_{j=1}^n \{z_j=0\}$ and its Jacobian determinant (as a mapping of the real vector space $\cx^n$) is given by
\begin{equation}\label{E:def-of-eta_q}
\eta_p(\zeta) = \det(D\chi_p)= (p-1)^n \abs{\zeta_1\cdot \dots \cdot \zeta_n}^{2p-4}.
\end{equation}
\item \label{eq-chipomega} 
It restricts to a homeomorphism $\chi_p:\Omega\to \Omega^{(p-1)}$ with inverse $\chi_q:\Omega^{(p-1)}\to \Omega$, where $\Omega^{(p-1)}$ is a Reinhardt power of $\Omega$ as in \eqref{E:ReinhardtPower}. 
\end{enumerate}
\end{proposition}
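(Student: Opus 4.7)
\medskip

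\noindent\textbf{Proof plan.} The fundamental observation is that $\chi_p$ acts \emph{componentwise}, reducing every claim to the single-variable map $\psi_p:\C\to\C$, $\psi_p(\zeta)=\zeta|\zeta|^{p-2}$ (extended to be $0$ at the origin). The entire argument will then be assembled from three short one-variable calculations, after which the multivariable statements follow coordinate-by-coordinate. The one algebraic identity powering everything is $(p-1)(q-1)=1$, which comes from $\tfrac{1}{p}+\tfrac{1}{q}=1$ via $p+q=pq$.

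For part \eqref{E:inverse-of-chip-is-chiq}, the plan is to verify $\psi_q\circ\psi_p=\mathrm{id}_{\C}$ by direct substitution: since $|\psi_p(\zeta)|=|\zeta|^{p-1}$, one has
\[
\psi_q(\psi_p(\zeta))=\zeta\,|\zeta|^{p-2}\cdot |\zeta|^{(p-1)(q-2)}=\zeta\,|\zeta|^{(p-2)+(p-1)(q-2)},
\]
and the exponent collapses to $0$ using $(p-1)(q-1)=1$. Continuity is immediate, giving the homeomorphism property on $\C^n$.

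For part \eqref{item:def-of-eta_q}, away from $\{\zeta_j=0\}$ the map $\psi_p$ is smooth with nonvanishing Jacobian, hence a diffeomorphism; and since $\chi_p$ is the Cartesian product of $n$ copies of $\psi_p$, its real $2n\times 2n$ Jacobian matrix is block-diagonal with $2\times 2$ blocks. I will compute each block either by writing $\psi_p$ in coordinates $(x,y)$ with $r^2=x^2+y^2$ and expanding the $2\times 2$ determinant, or equivalently in polar coordinates where $\psi_p$ sends $(r,\theta)$ to $(r^{p-1},\theta)$, giving Jacobian $r\mapsto (p-1)r^{p-1}\cdot r^{-1}\cdot r^{p-1}\cdot\ldots$ (accounting for the polar area element). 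Either way the determinant of each block equals $(p-1)|\zeta_j|^{2p-4}$, and multiplying over $j$ yields \eqref{E:def-of-eta_q}. This is the only step with a real computation, but it is a short and standard one.

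For part \eqref{eq-chipomega}, I will check the bijection directly from the definition of the Reinhardt power. If $\zeta\in\Omega$, then since $\Omega$ is Reinhardt the rotated point $(|\zeta_1|,\dots,|\zeta_n|)$ also lies in $\Omega$; and because $|\chi_p(\zeta)_j|^{1/(p-1)}=|\zeta_j|$, this shows $\chi_p(\zeta)\in\Omega^{(p-1)}$ from the defining condition \eqref{E:ReinhardtPower}. The reverse inclusion $\chi_q(\Omega^{(p-1)})\subset\Omega$ uses the same idea, but now invoking the crucial identity $\tfrac{1}{p-1}=q-1$ (again equivalent to $(p-1)(q-1)=1$) to match the exponents. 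Combined with part \eqref{E:inverse-of-chip-is-chiq}, this yields the desired homeomorphism of restrictions. The only mild subtlety, and the single place to be careful, is remembering that membership in a Reinhardt domain depends only on the tuple of moduli, so phase factors in $\chi_p(\zeta)_j=\zeta_j|\zeta_j|^{p-2}$ can be absorbed into the Reinhardt symmetry of $\Omega$ (resp.\ $\Omega^{(p-1)}$).
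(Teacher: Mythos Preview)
Your proposal is correct and follows essentially the same approach as the paper: the paper also verifies $\chi_q\circ\chi_p=\mathrm{id}$ coordinatewise via the exponent identity $p-2+(p-1)(q-2)=pq-p-q=0$, dismisses the Jacobian formula as a direct computation, and observes for part~\eqref{eq-chipomega} that the polar form $re^{i\theta}\mapsto r^{p-1}e^{i\theta}$ matches the definition of $\Omega^{(p-1)}$. Your write-up is in fact more detailed than the paper's own proof, which is quite terse.
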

\begin{proof}
For item \eqref{E:inverse-of-chip-is-chiq}, notice that if $w = \chi_p(z)$, then for each $j$ we have
\[ 
w_j\abs{w_j}^{q-2} = z_j \abs{z_j}^{p-2}\cdot \abs{z_j |z_j|^{p-2}}^{q-2} = z_j \abs{z_j}^{p-2+(p-1)(q-2)}= z_j,
\]
since $p-2+(p-1)(q-2)=pq-p-q=0$.
So $\chi_q\circ \chi_p$ is the identity, and similarly $\chi_p\circ \chi_q$ is also the identity.
Item \eqref{item:def-of-eta_q} follows from direct computation.
Item \eqref{eq-chipomega} follows upon noting that in each coordinate, the map $z \mapsto z\abs{z}^{p-2}$ is represented in polar coordinates as $r e^{i\theta}\mapsto r^{p-1} e^{i\theta}$. 
The claim follows from the definition of $\Omega^{(p-1)}$.
\end{proof} 
\begin{proposition}
The Monomial Basis Kernels of $A^p(\Omega)$ and $A^q\big(\Omega^{(p-1)},\eta_q\big)$ are related via the twisting map in the following way:
\begin{equation}\label{eq-twistedconjsym}
K^{\Omega}_{p,1}\left(\chi_q(z),w\right) = \ol{{K^{\Omega^{(p-1)}}_{q,\eta_q}(\chi_p(w),z)}},\qquad z\in \Omega^{(p-1)}, \, w\in \Omega.
\end{equation} 
This ``twisted" symmetry generalizes the conjugate symmetry of the Bergman kernel on $\Omega$.
\end{proposition}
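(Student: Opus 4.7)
\medskip

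\noindent\textbf{Proof plan.} My approach is to compare the series representations of the two sides term-by-term, using the twisting map as a change of variables. Two reductions make this manageable. First, for any integer multi-index $\alpha\in\Z^n$, direct computation gives
\[
e_\alpha(\chi_p(w))
=\prod_{j=1}^n\bigl(w_j\abs{w_j}^{p-2}\bigr)^{\alpha_j}
=e_\alpha(w)\abs{e_\alpha(w)}^{p-2}
=\chi_p^*e_\alpha(w),
\]
and similarly $e_\alpha(\chi_q(z))=\chi_q^*e_\alpha(z)$. Second, by Proposition~\ref{P:props-of-twisting-map}\eqref{eq-chipomega}, $\chi_p:\Omega\to\Omega^{(p-1)}$ is a homeomorphism that restricts to a diffeomorphism off the coordinate hyperplanes, with real Jacobian $\eta_p$.

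The heart of the argument is then the identity
\begin{equation}\label{eq-norm-match}
\norm{e_\alpha}_{L^q(\Omega^{(p-1)},\eta_q)}^q=\norm{e_\alpha}_{L^p(\Omega)}^p,\qquad \alpha\in\Z^n.
\end{equation}
To prove this, I push forward via $\zeta=\chi_p(w)$: since the coordinate hyperplanes have measure zero,
\[
\int_{\Omega^{(p-1)}}\abs{e_\alpha(\zeta)}^q\eta_q(\zeta)\,dV(\zeta)
=\int_{\Omega}\abs{e_\alpha(\chi_p(w))}^q\,\eta_q(\chi_p(w))\,\eta_p(w)\,dV(w).
\]
Using $\abs{e_\alpha(\chi_p(w))}=\abs{e_\alpha(w)}^{p-1}$ the integrand factor becomes $\abs{e_\alpha(w)}^{(p-1)q}=\abs{e_\alpha(w)}^p$, and using the explicit formula \eqref{E:def-of-eta_q} together with the standard identities $(p-1)(q-1)=1$ and $pq=p+q$, the combined weight $\eta_q(\chi_p(w))\eta_p(w)$ simplifies to the constant $1$. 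This establishes \eqref{eq-norm-match} and, as an immediate corollary, the equality of index sets
\[
\cs_q(\Omega^{(p-1)},\eta_q)=\cs_p(\Omega).
\]

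With these preliminaries I expand both sides of \eqref{eq-twistedconjsym}. Using \eqref{E:def-of-monomial-basis-kernel},
\[
K^\Omega_{p,1}(\chi_q(z),w)
=\sum_{\alpha\in\cs_p(\Omega)}
\frac{\chi_q^*e_\alpha(z)\,\overline{\chi_p^*e_\alpha(w)}}{\norm{e_\alpha}_{p,1}^p},
\]
while the conjugate of the right-hand kernel, after applying $e_\beta(\chi_p(w))=\chi_p^*e_\beta(w)$ and noting that norms are real, is
\[
\overline{K^{\Omega^{(p-1)}}_{q,\eta_q}(\chi_p(w),z)}
=\sum_{\beta\in\cs_q(\Omega^{(p-1)},\eta_q)}
\frac{\overline{\chi_p^*e_\beta(w)}\,\chi_q^*e_\beta(z)}{\norm{e_\beta}_{L^q(\Omega^{(p-1)},\eta_q)}^q}.
\]
The summation indices coincide and, by \eqref{eq-norm-match}, the denominators agree term-by-term, so the two series are identical. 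This yields \eqref{eq-twistedconjsym}.

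The main obstacle is the norm-matching computation \eqref{eq-norm-match}: one must carry the exponents through $\chi_p$ carefully and verify that the two Jacobian factors $\eta_p(w)$ and $\eta_q(\chi_p(w))$ combine to give exactly the power of $\abs{w_1\cdots w_n}$ needed to cancel. Once this purely algebraic cancellation (which relies crucially on the conjugate exponent relation $pq=p+q$) is in hand, everything else follows formally. In the pseudoconvex case the series converge locally normally by Theorem~\ref{thm-MBK1}, so the identity is a genuine equality of continuous functions; in general the identity holds as an equality of formal Laurent series.
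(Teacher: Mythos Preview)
Your proof is correct and follows essentially the same route as the paper's: establish the norm identity $\norm{e_\alpha}_{L^q(\Omega^{(p-1)},\eta_q)}^q=\norm{e_\alpha}_{L^p(\Omega)}^p$ by a change of variables through the twisting map, deduce $\cs_p(\Omega)=\cs_q(\Omega^{(p-1)},\eta_q)$, and then match the two series term-by-term. The only cosmetic difference is that the paper changes variables via $w=\chi_q(\zeta)$ (so the weight $\eta_q$ appears directly as the Jacobian and no further cancellation is needed), whereas you go the other way via $\zeta=\chi_p(w)$ and then verify $\eta_q(\chi_p(w))\eta_p(w)=1$; both lead to the same identity.
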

\begin{proof}
Recalling equation \eqref{eq-chistaralpha} above, observe that
\[
\abs{\chi_q^* e_\alpha(\zeta)}^p= \abs{e_\alpha(\chi_q(\zeta))}^p = \abs{e_\alpha(\zeta)}^{(q-1)p} = \abs{e_\alpha(\zeta)}^q.
\]
Now using $\chi_q$ to change of variables, we have
\begin{align*} 
\norm{e_\alpha}^p_{L^p(\Omega)} 
= \int_{\Omega^{(p-1)}} \abs{e_\alpha(\chi_q(\zeta))}^p \eta_q(\zeta) \,dV(\zeta)
= \norm{e_\alpha}^q_{L^q(\Omega^{(p-1)},\eta_q)}, 
\end{align*}
which in particular shows the equality of the sets $\cs_p(\Omega) = \cs_q\big(\Omega^{(p-1)},\eta_q\big)$ of allowable indices. 
Thus, for $z \in \Omega^{(p-1)}$ and $w \in \Omega$, we have
\begin{align*}
K^{\Omega}_{p,1}\left(\chi_q(z),w\right)  
&= \sum_{\alpha \in \cs_p(\Omega)} \frac{e_\alpha(\chi_q(z)) \ol{\chi_p^* e_\alpha(w)}}{\norm{e_\alpha}_{L^p(\Omega)}^p} \\
&= \sum_{\alpha \in \cs_q(\Omega^{(p-1)},\eta_q)} \ol{\frac{ e_\alpha(\chi_p(w)) \ol{\chi_q^* e_\alpha(z)}}{\norm{e_\alpha}^q_{L^q(\Omega^{(p-1)},\eta_q)}} } 
= \ol{{K^{\Omega^{(p-1)}}_{q,\eta_q}(\chi_p(w),z)}}.
\end{align*}
By setting $p=2$, \eqref{eq-twistedconjsym} recaptures the conjugate symmetry of the Bergman kernel.
\end{proof}

\subsection{Adjoints and Duality}\label{SS:adjoints-and-duality}
We now use the map $\chi_p$ to give a ``twisted" $L^2$-style pairing of the spaces $L^p(\Omega)$ and $L^q(\Omega^{(p-1)},\eta_q)$:
\begin{equation}\label{eq-new-pairing}
\{f,g\}_{p} = \int_\Omega f \cdot \ol{\chi_p^*(g)} \,dV, \qquad f \in L^p(\Omega), \quad g \in L^q(\Omega^{(p-1)},\eta_q).
\end{equation}

\begin{proposition}\label{prop-pairing}
The map $(f,g) \mapsto \{f,g\}_{p}$, is an isometric duality pairing of $L^p(\Omega)$ and $L^q \big(\Omega^{(p-1)},\eta_q \big)$.
In other words, through $\{\cdot,\cdot \}_p$ we obtain the dual space identification
\[
L^p(\Omega)' \simeq L^q\big(\Omega^{(p-1)},\eta_q \big),
\]
where the operator norm of the functional $\{\cdot,g\}_p \in L^p(\Omega)'$ is equal to the norm of its representative function $g \in L^q\big(\Omega^{(p-1)},\eta_q \big)$.
\end{proposition}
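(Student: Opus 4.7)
The strategy is to reduce the claim to the standard isometric duality between $L^p(\Omega)$ and $L^q(\Omega)$ by recognizing the pullback $\chi_p^*$ as an isometric isomorphism between $L^q\bigl(\Omega^{(p-1)},\eta_q\bigr)$ and $L^q(\Omega)$. Once this is established, the pairing $\{f,g\}_p = \int_\Omega f\cdot \overline{\chi_p^*(g)}\,dV$ is simply the composition of the standard $L^p$--$L^q$ pairing on $\Omega$ with this isometric isomorphism, and the conclusion follows from the classical fact that $L^p(\Omega)'\simeq L^q(\Omega)$ isometrically via integration against the conjugate.

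First I would verify the key computation: for every measurable $g$ on $\Omega^{(p-1)}$,
\begin{equation*}
\int_\Omega \bigl|\chi_p^*(g)(w)\bigr|^q\,dV(w) = \int_{\Omega^{(p-1)}} |g(\zeta)|^q\,\eta_q(\zeta)\,dV(\zeta).
\end{equation*}
This is an application of the change of variables formula using Proposition~\ref{P:props-of-twisting-map}: by item \eqref{eq-chipomega}, $\chi_p:\Omega\to\Omega^{(p-1)}$ is a homeomorphism with inverse $\chi_q$, and by item \eqref{item:def-of-eta_q} the real Jacobian of $\chi_q$ equals $\eta_q(\zeta) = (q-1)^n|\zeta_1\cdots\zeta_n|^{2q-4}$; this map is a diffeomorphism away from a set of measure zero (the coordinate hyperplanes), which is enough for the change of variables formula. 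Setting $\zeta=\chi_p(w)$, i.e.\ $w=\chi_q(\zeta)$, gives the identity above. In particular, the map
\begin{equation*}
\chi_p^*:L^q\bigl(\Omega^{(p-1)},\eta_q\bigr)\longrightarrow L^q(\Omega)
\end{equation*}
is a well-defined isometry, and the same computation applied to $\chi_q^*$ (using item \eqref{E:inverse-of-chip-is-chiq}) shows it is bijective with inverse $\chi_q^*$, hence an isometric isomorphism.

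Next I would observe that by construction,
\begin{equation*}
\{f,g\}_p = \int_\Omega f\cdot \overline{\chi_p^*(g)}\,dV = \langle f,\chi_p^*(g)\rangle_{L^p(\Omega),L^q(\Omega)},
\end{equation*}
so the map $g\mapsto \{\cdot,g\}_p$ from $L^q\bigl(\Omega^{(p-1)},\eta_q\bigr)$ into $L^p(\Omega)'$ factors as $\chi_p^*$ followed by the canonical identification $L^q(\Omega)\simeq L^p(\Omega)'$. Since both factors are isometric isomorphisms (the second by Proposition~\ref{prop-lpstrictconvex}'s setting and the standard duality theorem for $L^p$ spaces), so is the composition. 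The operator norm of $\{\cdot,g\}_p$ thus equals $\|\chi_p^*(g)\|_{L^q(\Omega)} = \|g\|_{L^q(\Omega^{(p-1)},\eta_q)}$, completing the proof.

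There is no real obstacle here: the only substantive step is the change-of-variables computation, which is a direct application of items \eqref{E:inverse-of-chip-is-chiq}--\eqref{eq-chipomega} of Proposition~\ref{P:props-of-twisting-map}. The minor technical point worth noting is that $\chi_p$ fails to be smooth on the coordinate hyperplanes, but since these form a null set and $\chi_p$ is a global homeomorphism, the change of variables formula applies without modification.
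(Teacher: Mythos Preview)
Your proposal is correct and follows essentially the same approach as the paper: both reduce the claim to the standard $L^p$--$L^q$ duality by showing via change of variables that the pullback is an isometric isomorphism between $L^q(\Omega)$ and $L^q\bigl(\Omega^{(p-1)},\eta_q\bigr)$. The only cosmetic difference is that the paper phrases the isometry in terms of $\chi_q^*:L^q(\Omega)\to L^q(\Omega^{(p-1)},\eta_q)$ rather than its inverse $\chi_p^*$, and invokes the closed-graph theorem where you (more directly) simply note that the inverse is also an isometry.
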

\begin{proof}
It is a classical fact that the ordinary $L^2$-style pairing of $L^p(\Omega)$ with $L^q(\Omega)$ given by 
\[
(f,h) \mapsto \int_\Omega f \cdot \ol{h}\, dV, \qquad f \in L^p(\Omega), \quad g \in L^q(\Omega)
\]
is an isometric duality pairing. 
Proposition \ref{P:props-of-twisting-map} says that $\chi_q:\Omega^{(p-1)}\to \Omega$ is a diffeomorphism outside a set of measure zero, with inverse $\chi_p:\Omega\to \Omega^{(p-1)}$, itself a diffeomorphism outside a set of measure zero. 
It therefore suffices to show that
\begin{equation}\label{E:chi_p-pullback}
    \chi_q^*: L^q(\Omega) \to L^q(\Omega^{(p-1)},\eta_q)
\end{equation}
is an isometric isomorphism of Banach spaces. 
Calculation shows
\begin{align}
\norm{h}^q_{L^q(\Omega)} 
= \int_{\Omega^{(p-1)}} \abs{h \circ \chi_q(w)}^q \,\eta_q(w) dV(w) 
= \norm{\chi_q^*(h)}^q_{L^q(\Omega^{(p-1)},\eta_q)} \label{E:Chi_p-norm-equiv1}.
\end{align}
Since the inverse map $\chi_p^*$ of $\chi_q^*$ exists, it is surjective and the result follows by the closed-graph theorem.
\end{proof}

\begin{proposition}\label{prop-adjoint}
Suppose the Monomial Basis Projection of $A^p(\Omega)$ is absolutely bounded on $L^p(\Omega)$. 
Then under the pairing $\{\cdot,\cdot\}_p$ defined in \eqref{eq-new-pairing}, its adjoint is the Monomial Basis Projection of $A^q(\Omega^{(p-1)},\eta_q)$, which is itself absolutely bounded in $L^q(\Omega^{(p-1)},\eta_q)$; i.e.,
\[
\left\{\bm{P}^\Omega_{p,1}f,g\right\}_p = \left\{f, \bm{P}^{\Omega^{(p-1)}}_{q,\eta_q}g\right\}_p, \quad \text{for all } \,\,f \in L^p(\Omega), \,\, g \in L^q(\Omega^{(p-1)},\eta_q).
\]
\end{proposition}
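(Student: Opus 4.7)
The plan is to first transfer the absolute boundedness of $(\bm{P}^\Omega_{p,1})^+$ to $(\bm{P}^{\Omega^{(p-1)}}_{q,\eta_q})^+$ using the twisted conjugate symmetry \eqref{eq-twistedconjsym}, and then to deduce the adjoint identity by a direct computation using Fubini's theorem and a change of variables. The two auxiliary facts driving both steps are the inverse relation $\chi_p\circ\chi_q=\mathrm{id}$ together with the Jacobian identity $\eta_q(\chi_p(z))\,\eta_p(z)=1$, both consequences of Proposition~\ref{P:props-of-twisting-map}.

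\emph{Step 1 (absolute boundedness).} First I will verify that the pullback operator $\chi_p^{*}:L^q(\Omega^{(p-1)},\eta_q)\to L^q(\Omega)$ is an isometric isomorphism; this is a direct change-of-variables calculation analogous to the one in the proof of Proposition~\ref{prop-pairing}. Then, taking absolute values in \eqref{eq-twistedconjsym} gives $|K^{\Omega^{(p-1)}}_{q,\eta_q}(\chi_p(w),z)|=|K^\Omega_{p,1}(\chi_q(z),w)|$, and performing the change of variables $\zeta\mapsto\chi_q(\zeta)$ inside the integral defining $(\bm{P}^{\Omega^{(p-1)}}_{q,\eta_q})^+$ will yield an identity of the form
\begin{equation*}
\chi_p^{*}\!\left((\bm{P}^{\Omega^{(p-1)}}_{q,\eta_q})^{+}h\right)(z) \;=\; \int_\Omega |K^\Omega_{p,1}(w,z)|\,\chi_p^{*}h(w)\,dV(w).
\end{equation*}
The operator on the right is the Banach-space transpose of $(\bm{P}^\Omega_{p,1})^+$, which is bounded on $L^q(\Omega)$ since $(\bm{P}^\Omega_{p,1})^+$ is bounded on $L^p(\Omega)$ by hypothesis. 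Composing with the isometry $\chi_p^{*}$ then gives the desired boundedness of $(\bm{P}^{\Omega^{(p-1)}}_{q,\eta_q})^+$ on $L^q(\Omega^{(p-1)},\eta_q)$. To conclude that this integral operator is indeed the Monomial Basis Projection, I will first verify the admissibility of $\eta_q$ on $\Omega^{(p-1)}$ via Proposition~\ref{prop-admissible} (noting that $\eta_q$ is continuous and positive off the coordinate hyperplanes), and then invoke Proposition~\ref{prop:MBP-is-a-basis-projection}.

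\emph{Step 2 (adjoint identity).} With both Monomial Basis Projections now known to be absolutely bounded integral operators, I will insert the integral representation of Theorem~\ref{thm-mbpmbk} into $\{\bm{P}^\Omega_{p,1}f,g\}_{p}$ and apply Fubini's theorem (absolute boundedness together with Hölder's inequality provides the dominating integrability), obtaining
\begin{equation*}
\{\bm{P}^\Omega_{p,1}f,g\}_{p} \;=\; \int_\Omega f(w)\left(\int_\Omega K^\Omega_{p,1}(z,w)\,\overline{\chi_p^{*}g(z)}\,dV(z)\right)dV(w).
\end{equation*}
Changing variables $z=\chi_q(\zeta)$ in the inner integral and then applying \eqref{eq-twistedconjsym} will identify the inner integral as $\overline{\chi_p^{*}(\bm{P}^{\Omega^{(p-1)}}_{q,\eta_q}g)(w)}$, producing the adjoint identity $\{\bm{P}^\Omega_{p,1}f,g\}_{p}=\{f,\bm{P}^{\Omega^{(p-1)}}_{q,\eta_q}g\}_{p}$.

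The main obstacle is the bookkeeping in Step 1: correctly coordinating the twisting map, the Jacobian weight $\eta_q$, and the reversal of arguments in the kernel so that $(\bm{P}^{\Omega^{(p-1)}}_{q,\eta_q})^+$ is realized (via the isometry $\chi_p^{*}$) as the Banach transpose of the absolutely bounded operator $(\bm{P}^\Omega_{p,1})^+$. Once these identifications are made precise, Step~2 reduces to a routine application of Fubini and the twisted symmetry.
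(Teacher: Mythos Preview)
Your proposal is correct and uses the same essential ingredients as the paper: Fubini's theorem (justified by absolute boundedness on the $p$-side), the change of variables $z=\chi_q(\zeta)$, and the twisted conjugate symmetry \eqref{eq-twistedconjsym}. The only substantive difference is organizational. You establish the absolute boundedness of $(\bm{P}^{\Omega^{(p-1)}}_{q,\eta_q})^+$ \emph{first}, by conjugating with the isometry $\chi_p^{*}$ to realize it as the Banach transpose of $(\bm{P}^{\Omega}_{p,1})^+$; the paper instead carries out the adjoint computation directly and extracts boundedness of the $q$-side integral operator \emph{a posteriori} via a closed-graph argument, then invokes Proposition~\ref{prop:MBP-is-a-basis-projection}. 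Your route is arguably cleaner: it delivers absolute boundedness (exactly what the statement asserts) in one stroke, whereas the paper's closed-graph argument yields only boundedness. Either way, once both MBPs are known to be bounded integral operators, Step~2 is identical in both proofs.
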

\begin{proof}
Suppose that $f\in L^p(\Omega)$ and $g\in L^q(\Omega^{(p-1)}, \eta_q)$:
\begin{align}
\left\{\bm{P}^\Omega_{p,1}f,g\right\}_p
= \int_\Omega \bm{P}^\Omega_{p,1} f \cdot \ol{\chi_p^*g }\,dV 
&= \int_\Omega\left( \int_\Omega K^\Omega_{p,1}(z,w)f(w)\, dV(w) \right) \ol{g(\chi_p(z))}\, dV(z)\label{eq-fubini1} \\
&= \int_\Omega \left( \int_\Omega K^\Omega_{p,1}(z,w)\ol{g(\chi_p(z))} dV(z) \right) f(w)\, dV(w), \label{eq-fubini2}
\end{align}
where the change in order of integration can be justified as follows. 
By the assumption that $\bm{P}^\Omega_{p,1}$ is absolutely bounded on $L^p(\Omega)$, we see that the function on $\Omega$ given by
\[ 
z \longmapsto  \int_\Omega \abs{ K^\Omega_{p,1}(z,w)}\cdot \abs{f(w)}dV(w)  
\]
is in $L^p(\Omega)$.  Since $g\in  L^q(\Omega^{(p-1)}, \eta_q)$, using Tonelli's theorem we see that
\begin{align*}
&\int_{\Omega\times \Omega} \abs{K^\Omega_{p,1}(z,w)g(\chi_p(z)) f(w)} dV(z,w)\\
&=\int_\Omega \left( \int_\Omega\abs{K^\Omega_{p,1}(z,w)}\cdot \abs{f(w)}  dV(z) \right) \abs{g(\chi_p(z))} dV(w) <\infty,
\end{align*}
by Proposition~\ref{prop-pairing}.  
Fubini's theorem gives that \eqref{eq-fubini1} = \eqref{eq-fubini2}.   
Now change variables in the inner integral of \eqref{eq-fubini2} by setting $z = \chi_q(\zeta)$, where $\zeta \in \Omega^{(p-1)}$ to obtain 
\begin{align}
\eqref{eq-fubini2} 
&= \int_{\Omega}\left( \int_{\Omega^{(p-1)}} K^\Omega_{p,1}(\chi_q(\zeta),w)\ol{g(\zeta)}\,\eta_q(\zeta)\, dV(\zeta) \right) f(w) dV(w) \nonumber\\
&= \int_{\Omega} \ol{\left( \int_{\Omega^{(p-1)}} {K^{\Omega^{(p-1)}}_{q,\eta_q}(\chi_p(w), \zeta)} {g(\zeta)}\, \eta_q(\zeta)  dV(\zeta) \right)} f(w) dV(w) \label{eq-just1} \\
&= \int_{\Omega} f(w) \ol{\bm{P}^{\Omega^{(p-1)}}_{q,\eta_q}g(\chi_p(w))}\,dV(w)\label{eq-just2} \\
&=\int_\Omega f \cdot \ol{\chi_p^*\left(\bm{P}^{\Omega^{(p-1)}}_{q,\eta_q}g\right)}dV 
= \left\{f, \bm{P}^{\Omega^{(p-1)}}_{q,\eta_q}g\right\}_p.\nonumber
\end{align}
The second equality above follows from \eqref{eq-twistedconjsym}.  The fact that \eqref{eq-just2} = \eqref{eq-just1} can be justified as follows. 
For $g \in L^q\big(\Omega^{(p-1)}, \eta_q\big)$, the quantity in \eqref{eq-just1} is finite for each $f\in L^p(\Omega)$, since by the above computations it is equal to the finite quantity $\{\bm{P}^\Omega_{p,1}f,g \}_p$. 
Therefore we see that for each $g\in L^q(\Omega^{(p-1)}, \eta_q)$, we have that the function
\[
\left( w \longmapsto \int_{\Omega^{(p-1)}} {K^{\Omega^{(p-1)}}_{q,\eta_q}(\chi_p(w), \zeta)} g(\zeta) \eta_q(\zeta) \,dV(\zeta)\right) \in L^q(\Omega),
\]
so that the linear map
\[ 
g \longmapsto \int_{\Omega^{(p-1)}} {K^{\Omega^{(p-1)}}_{q,\eta_q}(\chi_p(\cdot), \zeta)} g(\zeta) \eta_q(\zeta) \, dV(\zeta)
\]
is bounded from $L^q(\Omega^{(p-1)}, \eta_q)$ to $L^q(\Omega)$ by the closed graph theorem (since the integral operator is {easily} seen to be closed). 
Composing with the (isometric) bounded linear map $\chi_q^*:L^q(\Omega)\to L^q(\Omega^{(p-1)}, \eta_q)$, we see that the operator on $L^q(\Omega^{(p-1)}, \eta_q)$ given by 
\[
g \longmapsto \int_{\Omega^{(p-1)}}  {K^{\Omega^{(p-1)}}_{q,\eta_q}(\cdot, \zeta)} g(\zeta) \eta_q(\zeta)\,  dV(\zeta)
\]
is bounded on $L^q(\Omega^{(p-1)}, \eta_q)$. Now Proposition~\ref{prop:MBP-is-a-basis-projection} shows \eqref{eq-just2} = \eqref{eq-just1}.
\end{proof}


\begin{proposition}\label{prop-dual}
Suppose the Monomial Basis Projection of $A^p(\Omega)$ is absolutely bounded on $L^p(\Omega)$. 
Then the duality pairing of $L^p(\Omega)$ and $L^q(\Omega^{(p-1)}, \eta_q)$ by $\{\cdot,\cdot\}_p$ restricts to a duality pairing of the holomorphic subspaces.
 In other words, we can identify the dual space
\[
A^p(\Omega)' \simeq A^q\big(\Omega^{(p-1)},\eta_q \big).
\]
\end{proposition}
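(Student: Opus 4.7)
The plan is to show that the restriction of the full-space pairing $\{\cdot,\cdot\}_p$ from Proposition~\ref{prop-pairing} to the holomorphic subspaces induces a topological isomorphism between $A^q(\Omega^{(p-1)},\eta_q)$ and $A^p(\Omega)'$. Concretely, I would define
\[
T:A^q\bigl(\Omega^{(p-1)},\eta_q\bigr)\longrightarrow A^p(\Omega)', \qquad T(g)(f) = \{f,g\}_p \quad (f\in A^p(\Omega)),
\]
and verify in turn that $T$ is bounded, surjective, and injective. Boundedness, with the estimate $\|T(g)\|_{A^p(\Omega)'}\le \|g\|_{L^q(\Omega^{(p-1)},\eta_q)}$, is immediate from Proposition~\ref{prop-pairing}. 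The substantive steps are the remaining two, both of which exploit the adjoint relation in Proposition~\ref{prop-adjoint}; this is where the absolute boundedness hypothesis on $\bm{P}^\Omega_{p,1}$ is really used, since by that proposition it guarantees that the ``dual'' MBP $\bm{P}^{\Omega^{(p-1)}}_{q,\eta_q}$ exists as a bounded operator and is the adjoint of $\bm{P}^\Omega_{p,1}$ under the pairing.

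For surjectivity, given $\phi\in A^p(\Omega)'$, I would first invoke Hahn--Banach to extend $\phi$ to a bounded functional $\tilde\phi$ on $L^p(\Omega)$. Proposition~\ref{prop-pairing} then represents $\tilde\phi(\cdot) = \{\cdot,h\}_p$ for a unique $h\in L^q(\Omega^{(p-1)},\eta_q)$. The key move is to define $g = \bm{P}^{\Omega^{(p-1)}}_{q,\eta_q}h$, which lies in $A^q(\Omega^{(p-1)},\eta_q)$. For any $f\in A^p(\Omega)$ one has $\bm{P}^\Omega_{p,1}f = f$, so Proposition~\ref{prop-adjoint} gives
\[
\{f,g\}_p = \bigl\{f,\bm{P}^{\Omega^{(p-1)}}_{q,\eta_q}h\bigr\}_p = \bigl\{\bm{P}^\Omega_{p,1}f,h\bigr\}_p = \{f,h\}_p = \phi(f),
\]
so $T(g)=\phi$. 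For injectivity, suppose $g\in A^q(\Omega^{(p-1)},\eta_q)$ satisfies $T(g)=0$. Then for \emph{any} $h\in L^p(\Omega)$, using $\bm{P}^{\Omega^{(p-1)}}_{q,\eta_q}g = g$ together with Proposition~\ref{prop-adjoint},
\[
\{h,g\}_p = \bigl\{h,\bm{P}^{\Omega^{(p-1)}}_{q,\eta_q}g\bigr\}_p = \bigl\{\bm{P}^\Omega_{p,1}h,g\bigr\}_p = 0,
\]
since $\bm{P}^\Omega_{p,1}h\in A^p(\Omega)$. The nondegeneracy half of Proposition~\ref{prop-pairing} then forces $g = 0$.

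The main obstacle is conceptual rather than computational: it lies in recognizing that the absolutely bounded MBP supplies the canonical choice of representative in $A^q$ for every Hahn--Banach extension, thereby converting the existence statement of Hahn--Banach into a concrete identification of duals. This is the same mechanism by which the Szeg\H{o} projection identifies the dual of $H^p(\D)$, and it clarifies why absolute boundedness matters here: without it, Proposition~\ref{prop-adjoint} is not available and both the projection step in the surjectivity argument and the ``absorb the operator across the pairing'' step in the injectivity argument break down. The identification $T$ is in general only a topological (not isometric) isomorphism; tracking constants through the surjectivity argument yields the two-sided estimate
\[
\|T(g)\|_{A^p(\Omega)'} \le \|g\|_{L^q(\Omega^{(p-1)},\eta_q)} \le C\,\|T(g)\|_{A^p(\Omega)'},
\]
where $C$ is the operator norm of $\bm{P}^\Omega_{p,1}$ on $L^p(\Omega)$.
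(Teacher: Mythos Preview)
Your proposal is correct and follows essentially the same approach as the paper's own proof: define the map $g\mapsto\{\cdot,g\}_p$ from $A^q(\Omega^{(p-1)},\eta_q)$ to $A^p(\Omega)'$, then use Hahn--Banach together with the adjoint relation of Proposition~\ref{prop-adjoint} to establish surjectivity, and use the same adjoint relation (absorbing the projection across the pairing) to establish injectivity via the nondegeneracy of the full-space pairing. Your added remarks on the two-sided norm estimate are a harmless elaboration not present in the paper.
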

\begin{proof}
We claim that the conjugate-linear continuous map $A^q(\Omega^{(p-1)}, \eta_q )\to A^p(\Omega)'$ given by $h \mapsto \{\cdot, h\}_{p,1}$ is a  homeomorphism of Banach spaces. 
To see surjectivity, let $\phi\in A^p(\Omega)'$, let $\wt{\phi}: L^p(\Omega)\to \cx$ be its Hahn-Banach extension, and let $g\in L^q(\Omega^{(p-1)}, \eta_q)$ be such that $\wt{\phi}(f)=\{f,g\}_{p,1}$. 
The existence of $g$ follows from Proposition~\ref{prop-pairing}. 
We see from Proposition~\ref{prop-adjoint} that for each $f\in A^p(\Omega)$ we have
\[ 
\phi(f)= \wt{\phi}(f)= \{f,g\}_p = \{\bm{P}^{\Omega}_{p,1}f,g\}_p= \{f,\bm{P}^{\Omega^{(p-1)}}_{q,\eta_q}g\}_p
\]
so the surjectivity follows since $\bm{P}^{\Omega^{(p-1)}}_{q,\eta_q}g\in A^q\big(\Omega^{(p-1)}, \eta_q \big)$. 
Now if $h\in A^q\big(\Omega^{(p-1)}, \eta_q \big)$ is in the null-space of this map, i.e., for each $f\in A^p(\Omega)$ we have $\{f,h\}_p=0$, then for $g\in L^p(\Omega)$:
\[
\{g,h\}_p = \{g, \bm{P}^{\Omega^{(p-1)}}_{q,\eta_q}h\}_p=\{\bm{P}^{\Omega}_{p,1}g, h\}_p=0. 
\]
This shows that $h = 0$, so the mapping is injective.
\end{proof}

\subsection{Dual spaces on monomial polyhedra}

The duality pairing in Section \ref{SS:adjoints-and-duality} should be contrasted with the usual Hölder duality pairing of $L^p$ and $L^q$. 
On the disc $\D$, the Hölder pairing restricts to a duality pairing of the holomorphic subspaces, yielding the identification $A^p(\D)' \simeq A^q(\D)$.
On the punctured disc, the Hölder pairing fails to restrict to a holomorphic duality pairing and any attempt to identify $A^p(\D^*)'$ with $A^q(\D^*)$ fails.
This is discussed further in Section~\ref{SS:Berg-proj-holomorphic-dual-spaces}.
For similar results, see \cite{zeljko}.

\begin{theorem}\label{T:dual-space-on-U}
Let $U = \D^*$ or $\D$. The dual space of $A^p(U)$ admits the identification
\[
A^p(U)' \simeq A^q(U,\eta_q),\qquad \eta_q(\zeta) = (q-1)|\zeta|^{2q-4},
\]
via the pairing \eqref{eq-new-pairing}, sending $(f,g) \mapsto \{f,g\}_p$, where $f \in A^p(U)$, $g \in A^q(U,\eta_q)$.
\end{theorem}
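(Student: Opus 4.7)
The plan is to apply Proposition~\ref{prop-dual} to $\Omega = U$, which immediately yields a holomorphic duality identification $A^p(U)' \simeq A^q(U^{(p-1)}, \eta_q)$ provided we verify the hypothesis that the Monomial Basis Projection of $A^p(U)$ is absolutely bounded on $L^p(U)$. After that, only two routine identifications remain: determining the Reinhardt power $U^{(p-1)}$, and confirming the explicit form of $\eta_q$ in one complex dimension.

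First, I would establish absolute boundedness of $\bm{P}^U_{p,1}$ on $L^p(U)$ by specializing Corollary~\ref{C:AMBO-MBP-existence-on-U} to $\gamma = 0$. When $\gamma = 0$ the weight $\mu_\gamma \equiv 1$, so the corollary says precisely that the MBP of $A^p(U) = A^p(U,\mu_0)$ exists and is absolutely bounded. This is the lone analytic input; the Schur-type estimates in Proposition~\ref{P:abs-boundedness-arith-prog-operators} together with Corollary~\ref{C:abs-subprojections-on-U} have already done the work.

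Next, I would identify $U^{(p-1)}$ directly from definition \eqref{E:ReinhardtPower}. Since
\begin{equation*}
U^{(p-1)} = \bigl\{\,z\in \C : |z|^{1/(p-1)} \in U\,\bigr\},
\end{equation*}
taking $U = \D$ (resp.\ $\D^*$) and noting that the condition $|z|^{1/(p-1)} < 1$ is equivalent to $|z| < 1$ (resp.\ $0 < |z| < 1$), we obtain $U^{(p-1)} = U$ in both cases. This collapse is what makes the statement clean: the twisted duality between a domain and its Reinhardt power reduces here to a self-duality of $U$.

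Finally, setting $n = 1$ in Proposition~\ref{P:props-of-twisting-map}\eqref{item:def-of-eta_q} gives $\eta_q(\zeta) = (q-1)|\zeta|^{2q-4}$, matching the formula in the statement. Concatenating these observations with Proposition~\ref{prop-dual} yields the isomorphism $A^p(U)' \simeq A^q(U, \eta_q)$ implemented by the pairing $\{\cdot,\cdot\}_p$ of \eqref{eq-new-pairing}. There is no genuine obstacle here; the theorem is essentially a corollary of the abstract duality machinery in Section~\ref{SS:adjoints-and-duality}, with the only nontrivial point being the absolute-boundedness hypothesis, which Section~\ref{SS:Lp-bounded-operators} has already secured.
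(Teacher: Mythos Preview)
Your proposal is correct and follows essentially the same approach as the paper: invoke Corollary~\ref{C:AMBO-MBP-existence-on-U} for absolute boundedness, observe that $U^{(p-1)} = U$, and apply Proposition~\ref{prop-dual}. The only differences are cosmetic---you make the specialization $\gamma = 0$ and the one-dimensional form of $\eta_q$ explicit, whereas the paper leaves these implicit.
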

\begin{proof}
It was shown in Corollary \ref{C:AMBO-MBP-existence-on-U} that the MBP of $A^p(U)$ is absolutely bounded.
Recalling the definition of a Reinhart power in \eqref{E:ReinhardtPower}, it it clear that in our case $U^{(m)} = U$ for every $m>0$, so in particular for $m = p-1$.  Proposition~\ref{prop-dual} now gives the result.
\end{proof}

The same behavior regarding Reinhardt powers seen on the disc and punctured disc continues to hold on all monomial polyhedra:
\begin{proposition}\label{P:Reinhart-powers-monomial-polys}
Let $\Uu \subset \C^n$ be a monomial polyhedron of the form \eqref{eq-udef}.  Then for each $m>0$, the Reinhardt power $\Uu^{(m)} = \Uu$.
\end{proposition}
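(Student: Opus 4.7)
The plan is to prove the equality $\Uu^{(m)} = \Uu$ by directly unpacking both definitions and comparing the resulting systems of inequalities. By \eqref{eq-udef}, a point $z$ lies in $\Uu = \Uu_B$ precisely when $|e_{b^j}(z)| < 1$ for each $j = 1,\dots,n$, where $b^j$ is the $j$-th row of the matrix $B$. By \eqref{E:ReinhardtPower}, $z \in \Uu^{(m)}$ iff the point $w := (|z_1|^{1/m},\dots,|z_n|^{1/m})$ satisfies these same defining inequalities.

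The key step is the elementary identity
\begin{equation*}
|e_{b^j}(w)| = \prod_{k=1}^n |z_k|^{b^j_k/m} = \Bigl(\prod_{k=1}^n |z_k|^{b^j_k}\Bigr)^{1/m} = |e_{b^j}(z)|^{1/m},
\end{equation*}
valid at every point where the monomial evaluation makes sense (i.e.\ whenever no coordinate $z_k$ with $b^j_k < 0$ vanishes). Since $m > 0$, the strict inequality $|e_{b^j}(z)|^{1/m} < 1$ is equivalent to $|e_{b^j}(z)| < 1$, so the two systems of inequalities defining $\Uu^{(m)}$ and $\Uu$ coincide.

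I don't anticipate any real obstacle here; the only point requiring attention is the possibility that some $b^j_k$ is negative while $z_k = 0$, in which case neither $|e_{b^j}(z)|$ nor $|e_{b^j}(w)|$ is defined as a finite number. Such points automatically fail the defining inequalities of both $\Uu$ and $\Uu^{(m)}$ and therefore belong to neither, so the equivalence of the two sets holds without modification. This yields $\Uu^{(m)} = \Uu$.
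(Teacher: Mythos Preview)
Your proof is correct and follows essentially the same approach as the paper's: both unpack the definitions of $\Uu$ and $\Uu^{(m)}$, use the identity $|e_{b^j}(w)| = |e_{b^j}(z)|^{1/m}$ for $w = (|z_1|^{1/m},\dots,|z_n|^{1/m})$, and observe that for $m>0$ the inequality $t^{1/m}<1$ is equivalent to $t<1$. Your additional remark handling the case $b^j_k<0$, $z_k=0$ is a small clarification the paper omits.
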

\begin{proof}
Write $\Uu = \Uu_B$, where the rows of $B$ are given by $b^j = (b^j_1,\dots,b_n^j) \in \Z^{1 \times n}$.  
From the definition of the Reinhardt power of a domain given in \eqref{E:ReinhardtPower}, we see
\begin{align*}
\Uu^{(m)} &= \{ z \in \C^n : (|z_1|^{\frac{1}{m}},\dots,|z_n|^{\frac{1}{m}}) \in \Uu \} \\
&= \{ z \in \C^n : |e_{b^j}\big(|z_1|^{\frac{1}{m}},\dots,|z_n|^{\frac{1}{m}}\big)| < 1, \,\, 1 \le j \le n \} \\
&= \big\{ z \in \C^n : |e_{b^j}(z) |^{\frac{1}{m}} < 1, \,\, 1 \le j \le n \big\} 
= \big\{ z \in \C^n : |e_{b^j}(z) | < 1, \,\, 1 \le j \le n \big\} = \Uu.
\end{align*}
\end{proof}

\begin{theorem}\label{T:MBP-duality-pairing} 
Let $\Uu$ be a monomial polyhedron in $\C^n$. The dual space of $A^p(\Uu)$ admits the identification
\[
A^p(\Uu)' \simeq A^q(\Uu,\eta_q),\qquad \eta_q(\zeta) = (q-1)|\zeta_1\cdots\zeta_n|^{2q-4},
\]
via the pairing \eqref{eq-new-pairing}, sending $(f,g) \mapsto \{f,g\}_p$, where $f \in A^p(\Uu)$, $g \in A^q(\Uu,\eta_q)$.
\end{theorem}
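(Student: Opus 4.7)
The proof is essentially an application of three results that have already been established. The plan is simply to combine them.

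First, I would invoke Theorem~\ref{T:MBP-AbsBoundedness}, which guarantees that for any monomial polyhedron $\Uu$ and any $1<p<\infty$, the operator $(\bm{P}^\Uu_{p,1})^+$ is bounded on $L^p(\Uu)$; in other words, the Monomial Basis Projection of $A^p(\Uu)$ is absolutely bounded. This is the only nontrivial analytic input, and it is precisely the hypothesis needed for the duality machinery of Section~\ref{S:Duality}.

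Next, I would apply Proposition~\ref{P:Reinhart-powers-monomial-polys}, which says that for \emph{every} $m>0$ one has $\Uu^{(m)}=\Uu$. In particular $\Uu^{(p-1)}=\Uu$, so the weighted Bergman space appearing in the abstract duality result will live on $\Uu$ itself rather than on some auxiliary Reinhardt power.

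Finally, I would apply Proposition~\ref{prop-dual} with $\Omega=\Uu$. Since the hypothesis (absolute boundedness of $\bm{P}^\Uu_{p,1}$) is verified by the first step, the pairing $\{\cdot,\cdot\}_p$ from \eqref{eq-new-pairing} restricts from a pairing between $L^p(\Uu)$ and $L^q(\Uu^{(p-1)},\eta_q)$ to an isometric duality pairing of the holomorphic subspaces, yielding
\[
A^p(\Uu)' \;\simeq\; A^q\bigl(\Uu^{(p-1)},\eta_q\bigr) \;=\; A^q(\Uu,\eta_q),
\]
with the explicit form of the weight $\eta_q$ read off from Proposition~\ref{P:props-of-twisting-map}\eqref{item:def-of-eta_q} (up to an overall positive constant, which does not affect the identification of the Banach space of functionals).

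There is no real obstacle in the proof itself; all the difficulty was already absorbed into Theorem~\ref{T:MBP-AbsBoundedness} (whose proof occupies Sections~\ref{sec-onedim}--\ref{S:MonomialPolyhedra}) and into Proposition~\ref{prop-dual}, where the adjoint computation for the MBP was carried out via Fubini and the twisted symmetry \eqref{eq-twistedconjsym}. The only thing worth double-checking in the write-up is that the formula $\Uu^{(p-1)}=\Uu$ allows us to replace the codomain of the pairing cleanly, so that the ``twisted'' $L^2$-style pairing $\{\cdot,\cdot\}_p$ becomes a pairing between two spaces on the same underlying domain $\Uu$.
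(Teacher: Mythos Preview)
Your proposal is correct and follows exactly the same approach as the paper's proof: invoke Theorem~\ref{T:MBP-AbsBoundedness} for absolute boundedness, use Proposition~\ref{P:Reinhart-powers-monomial-polys} to identify $\Uu^{(p-1)}=\Uu$, and then apply Proposition~\ref{prop-dual}. One small caution: Proposition~\ref{prop-dual} gives a Banach-space duality (a homeomorphism onto the dual), not an \emph{isometric} identification, so drop the word ``isometric'' in your write-up.
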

\begin{proof}
The absolute boundedness of the MBP of $A^p(\Uu)$ seen in Theorem~\ref{T:MBP-AbsBoundedness} allows for the use of Proposition~\ref{prop-dual}. 
In this setting $\Uu^{(p-1)} = \Uu$ by Proposition \ref{P:Reinhart-powers-monomial-polys}, which yields the result.
\end{proof}


\section{Comparing the MBP to the Bergman projection on \texorpdfstring{$L^p$}{Lp}}\label{S:BergmanComparison}

Let $\Omega\subset\cx^n$ be a bounded Reinhardt domain such that the origin lies on its boundary. In even the simplest example, the punctured disc $\D^*=\{z\in \cx: 0<\abs{z}<1\}$, special features of the holomorphic function theory can be seen in the Riemann removable singularity theorem. 
Higher dimensional versions of this phenomenon were noticed by Sibony in \cite{sibony1975} on the Hartogs triangle and later generalized in \cite{sibony}.  

\subsection{The \texorpdfstring{$L^p$}{Lp}-irregularity of the Bergman projection.}\label{SS:Lp-reg-Berg-proj}

In understanding the $L^p$ function theory on $\Omega$, it is instructive to consider the behavior of the sets of $p$-allowable indices introduced in Section~\ref{sec-notation}: $\cs_p(\Omega)=\{\alpha\in \Z^n: e_\alpha\in L^p(\Omega)\}$, 
as $p$ traverses the interval $(1,\infty)$. 
It is clear that the sets can only shrink as $p$ increases, as fewer monomials become integrable due to increase in the exponent $p$ in the integral $\int_\Omega \abs{e_\alpha}^p dV$.
However, the set $\cs_p(\Omega)$ is always nonempty,  since $\N^n \subset \cs_p(\Omega)$, $\Omega$ being bounded. 

For example on the punctured disc, if $p<2$, then $\cs_p(\D^*) = \{\alpha \in \Z:\alpha \geq -1\}$, and if $p\geq 2$, then $\cs_p(\D^*) = \{\alpha \in \Z:\alpha \geq 0\}$. 
The exponent $p=2$ where the set of indices shrinks is a \emph{threshold}.
The $L^p$-irregularity of the Bergman projection is closely related with these thresholds.
It was shown in \cite{bcem}, that on a monomial polyhedron $\Uu$, the Bergman projection is bounded
in $L^p$ if and only if $p \in (q^*,p^*)$, where $p^* = p^*(\Uu)$ is the smallest threshold of $\Uu$ bigger than $2$ and $q^* = q^*(\Uu)$ is its Hölder conjugate. 
Explicit values of $p^*$ and $q^*$ are given in the main theorem of \cite{bcem}; see also Proposition \ref{P:BergmanLpRange-MonoPoly}.

Outside the interval $(q^*,p^*)$, the $L^p$-boundedness of the Bergman projection on the monomial polyhedron $\Uu$ fails in different ways depending on whether $p \geq p^*$ or $p \leq  q^*$.
Since $\Uu$ is bounded, we have $L^p(\Uu)\subset L^2(\Uu)$ if $p \geq p^* > 2$, so the integral operator defining the Bergman projection in \eqref{eq-bergmanproj} is defined for each $f\in L^p(\Uu)$. 
The failure of boundedness of the Bergman projection corresponds to the fact that  there are functions $f\in L^p(\Uu)$ for which the projection $\bm{B}^{\Uu}f$ is not in $A^p(\Uu)$. 
It is easy to give  an explicit example when $\Uu=\h$, the Hartogs triangle.
Suppose $p\ge p^*(\h) = 4$ and let $f(z) = \ol{z}_2$, which is  bounded and therefore in $L^p(\h)$. 
A computation shows that there is a constant $C$ such that  $\bm{B}^\h f(z) = C{z_2}^{-1}\notin L^p(\h)$. 
This idea can be generalized to an arbitrary monomial polyhedron $\Uu$ to show that if $p\geq p^*$, there is a function in $L^p(\Uu)$ which projects to a monomial which is in $L^2(\Uu)$ but not in $L^p(\Uu)$. 
In \cite{chakzeytuncu} the range of the map $\bm{B}^\h:L^p(\h)\to L^2(\h)$ for $p\geq 4$ was identified as a weighted $L^p$-Bergman space strictly larger than $L^p(\h)$, and a similar result holds on any monomial polyhedron. 
Recent work in \cite{HuoWick2020a} shows that $\bm{B}^\h$ is of weak-type (4,4), and this has been extended to generalized Hartogs triangles in \cite{koenig2}. 
For $p \le q^*$, the situation is even worse:

\begin{proposition}\label{P:cant-extend}
Let $1<p \le q^*(\Uu) $ and $z\in \Uu$. 
There is a function $f\in L^p(\Uu)$ such that the integral
\[
\int_{\Uu} B^\Uu(z,w) f(w)\,dV(w)
\]
diverges. 
Consequently, there is no way to extend the Bergman projection to $L^p(\Uu)$ using its integral representation. 
\end{proposition}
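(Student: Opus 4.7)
The plan is to show that for each $z\in\Uu$ the ``row'' $w\mapsto B^\Uu(z,w)$ fails to belong to $L^q(\Uu)$, where $q$ is the Hölder conjugate of $p$. Since the condition $1<p\le q^*(\Uu)$ is equivalent to $q\ge p^*(\Uu)$, once this failure is established, a standard closed-graph / Hölder-duality argument produces an $f\in L^p(\Uu)$ for which the pointwise product $B^\Uu(z,\cdot)\cdot f$ is not integrable on $\Uu$, so the Bergman integral is not even defined as a Lebesgue integral.

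For the main step, expand the Bergman kernel as
\[
B^\Uu(z,w)=\sum_{\alpha\in\cs_2(\Uu)}\frac{e_\alpha(z)\,\ol{e_\alpha(w)}}{\norm{e_\alpha}_{L^2(\Uu)}^2},
\]
and set $h_z(w):=\ol{B^\Uu(z,w)}$, which is holomorphic on $\Uu$ and has Laurent coefficients $\ol{e_\alpha(z)}/\norm{e_\alpha}_{L^2(\Uu)}^2$ at each $\alpha\in\cs_2(\Uu)$. Since $q\ge p^*(\Uu)$, the definition of $p^*$ as the smallest threshold strictly exceeding $2$ guarantees $\cs_q(\Uu)\subsetneq\cs_2(\Uu)$, so one can pick a witness $\alpha^*\in\cs_2(\Uu)\setminus\cs_q(\Uu)$. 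For $z$ with all nonzero coordinates, $e_{\alpha^*}(z)\ne 0$, so $h_z$ has a nonzero Laurent coefficient at the index $\alpha^*\notin\cs_q(\Uu)$. By Theorem~\ref{T:monomials-form-a-Schauder-basis} applied to $A^q(\Uu)$, every element of $A^q(\Uu)$ has vanishing Laurent coefficients off $\cs_q(\Uu)$; consequently $h_z\notin A^q(\Uu)$, equivalently $B^\Uu(z,\cdot)\notin L^q(\Uu)$.

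To finish, set $g:=B^\Uu(z,\cdot)$ and consider the linear map $f\mapsto fg$. If it were defined as an operator $L^p(\Uu)\to L^1(\Uu)$ on all of $L^p(\Uu)$, then the closed-graph theorem would make it bounded, and the converse of Hölder's inequality would force $g\in L^q(\Uu)$, contradicting the previous paragraph. Hence there exists $f\in L^p(\Uu)$ satisfying $\int_\Uu \abs{B^\Uu(z,w) f(w)}\,dV(w)=\infty$, which is precisely the assertion that the Bergman integral at $z$ diverges.

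The main obstacle I expect is handling $z\in\Uu$ with one or more vanishing coordinates — e.g., points of the Hartogs triangle with $z_1=0$ — since then many evaluations $e_\alpha(z)$ vanish and the naive witness $\alpha^*$ may contribute a zero coefficient to $h_z$. Resolving this requires refining the witness: one must find $\alpha^*\in\cs_2(\Uu)\setminus\cs_q(\Uu)$ whose nonzero components lie only at the coordinates where $z$ is nonzero. Such a witness should exist by the combinatorial description of $\cs_p(\Uu)$ in terms of the defining integer matrix $B$ of the monomial polyhedron from Section~\ref{SS:matrix-representation}, since the threshold drop at $p=p^*(\Uu)$ is reflected in every coordinate subplane that $\Uu$ meets; the verification is essentially a bookkeeping exercise with the inequalities $p\cdot\alpha\cdot B^{-1}>-\one$ cutting out $\cs_p(\Uu)$.
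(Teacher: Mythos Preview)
Your approach is essentially identical to the paper's: expand the Bergman kernel as the Laurent series $\sum_{\alpha\in\cs_2(\Uu)}\ol{z}^\alpha e_\alpha/\norm{e_\alpha}_2^2$, use $q\ge p^*(\Uu)$ to pick a witness $\alpha^*\in\cs_2(\Uu)\setminus\cs_q(\Uu)$, invoke Theorem~\ref{T:monomials-form-a-Schauder-basis} to conclude $B^\Uu(z,\cdot)\notin L^q(\Uu)$, and then use $L^p$--$L^q$ duality to produce $f$. Your closed-graph/converse-H\"older wrap-up is a slightly more explicit version of the paper's one-line ``it now follows.''

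Your caution about points $z$ with some $z_j=0$ is well placed, and in fact the paper's proof has the very same lacuna: it simply asserts that the witness monomial ``appears with a nonzero coefficient'' $\ol{z}^{\alpha^*}$ without addressing this case. Your proposed fix---finding $\alpha^*\in\cs_2(\Uu)\setminus\cs_q(\Uu)$ supported on the nonvanishing coordinates of $z$---is the right idea and works immediately in the Hartogs triangle (the witness $e_{(0,-1)}=z_2^{-1}$ has $\alpha_1=0$, and $z_2\ne0$ for every $z\in\h$). But your general justification (``the threshold drop is reflected in every coordinate subplane that $\Uu$ meets'') is a heuristic, not a proof; if you want the statement for \emph{every} $z\in\Uu$ on an arbitrary monomial polyhedron, this is the step that still needs to be written out.
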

\begin{proof}
Let $q$ denote the Hölder conjugate of $p$ so that $q\geq p^*$. 
The holomorphic function on the Reinhardt domain $\Uu$ given by $g(\zeta)= B(\zeta, z)$ has Laurent expansion
\[ 
g(\zeta)= \sum_{\alpha\in \cs_2(\Uu)} \frac{\ol{z}^\alpha}{\norm{e_\alpha}_2}\zeta^\alpha. 
\]
Since $q\geq p^*$, and the set of integrable monomials shrinks at $p^*$, it follows  that there is a monomial $e_\alpha\in A^2(\Uu) \setminus A^q(\Uu)$. 
Since this non-$A^q$ monomial appears in the above Laurent series with a nonzero coefficient, and by Theorem~\ref{T:monomials-form-a-Schauder-basis}, the Laurent expansion of a function in $A^q$ can only have monomials which are in $A^q$, it follows that $g\notin A^q(\Uu)$. 
By symmetry therefore, $B(z,\cdot)\not \in L^q(\Uu)$. 
It now follows that there is a function $f \in L^p(\Uu)$ such that the integral above does not converge.
\end{proof}
When $\Uu=\h$, one can show by explicit computation that if $1<p<\frac{4}{3} = q^*(\h)$, we can take $f(w)=w_2^{-3}$ in the above result for each $z\in \h$. 
It was shown  in \cite{HuoWick2020a} that $\bm{B}^\h$ fails to be weak-type $(\frac{4}{3},\frac{4}{3})$, and this was extended in \cite{koenig2} to generalized Hartogs triangles.
But in light of Proposition~\ref{P:cant-extend}, we see that $\bm{B}^\h$ does not even exist as an everywhere defined operator on $L^{4/3}(\h)$.

In contrast with the above, Theorem~\ref{T:MBP-AbsBoundedness} guarantees that for $1<p<\infty$ and $\Uu$ a monomial polyhedron, that the MBP $\bm{P}_{p,1}^\Uu$ is a bounded operator from $L^p(\Uu)$ onto $A^p(\Uu)$, and Theorem~\ref{thm-mbpmbk} says that for $z \in \Uu$, the function $K^\Uu_{p,1}(z,\cdot) \in L^q(\Uu)$, where $\frac{1}{p}+\frac{1}{q} = 1$.

\subsection{Failure of surjectivity}\label{SS:Inability-to-reproduce}
Even if the Bergman projection can be given a bounded extension to $L^p$, it need not be surjective onto $A^p$ for $p<2$, as one sees in the case of the punctured disc. 
Here, since $A^2(\D^{*})$ and $A^2(\D)$ are identical, the Bergman kernels have the same formula. 
The Bergman projection on $\D^*$ consequently extends to a bounded operator on $L^p(\D^*)$ for every $1<p<\infty$, but fails to be surjective onto $A^p(\D^*)$ for $p \in (1,2)$. 
This happens because the range of the Bergman projection can be naturally identified with $A^p(\D)$, and when $1<p<2$, the space $A^p(\D)$ is a strict subspace of $A^p(\D^*)$ (for example the function $g(z)=z^{-1}$ belongs to $A^p(\D^*) \setminus A^p(\D)$). 
In particular, $\bm{B}^{\D^*}$ is not the identity on $A^p(\D^*)$ and its nullspace is the one-dimensional span of $g(z)=z^{-1}$.
 
On the Hartogs triangle, the Bergman projection is bounded on $L^p(\h)$ for $\frac{4}{3} < p < 4$, but is not surjective onto $A^p(\h)$ for $\frac{4}{3} < p < 2$.
Let $\mathcal{N}\subset A^p(\h)$ be the closed subspace spanned by the monomials in $A^p(\h) \setminus A^2(\h)$. 
One sees from a computation that the monomials in $ A^p(\h) \setminus A^2(\h)$ are $e_\alpha$ with  $\alpha_1\ge0$ and $\alpha_1+\alpha_2 = -2$. 
Then one can verify using orthogonality of $L^p$ and $L^q$ monomials that the nullspace of $\bm{B}^\h$ restricted to $A^p(\h)$ is $\mathcal{N}$.

In contrast, the MBP of $A^p(\Uu)$ accounts for {\em all} monomials appearing in the Banach-space basis $\{e_\beta: \beta \in \cs_p(\Uu) \}$, and Corollary \ref{C:MBP-boundedness/surjectivity} shows that for $1<p<\infty$, $\bm{P}^{\Uu}_{p,1}$ is a bounded {\em surjective} projection of $L^p(\Uu)$ onto $A^p(\Uu)$.

\subsection{The Bergman projection and holomorphic dual spaces}\label{SS:Berg-proj-holomorphic-dual-spaces}

The following is a reformulation of \cite[Theorem~2.15]{ChEdMc19}:

\begin{theorem}\label{E:L2-duality-classic}
Suppose that the following two conditions hold on a domain $U\subset \cx^n$.
\begin{enumerate}
\item The absolute Bergman operator $(\bm{B}^U)^+: L^p(U) \to L^p(U)$ is bounded.
\item The Bergman projection acts as the identity operator on both $A^p(U)$ and $A^q(U)$.
\end{enumerate}
Then the sesquilinear Hölder pairing restricts to a duality pairing of $A^p(U)$ with $A^q(U)$:
\begin{equation}\label{E:L2-pairing}
\langle f,g \rangle = \int_U f \ol{g}\,dV, \qquad f\in A^p(U), \quad g\in A^q(U),
\end{equation}
providing the dual space identification $A^p(U)' \simeq A^q(U)$.
\end{theorem}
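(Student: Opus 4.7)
The plan is to establish the isomorphism via the map $T:A^q(U)\to A^p(U)'$ defined by $T(g)(f)=\langle f,g\rangle$, showing it is a well-defined bounded conjugate-linear bijection with bounded inverse. Hölder's inequality gives $|T(g)(f)|\le\|f\|_p\|g\|_q$ immediately, so $T$ is well-defined and $\|T(g)\|\le\|g\|_q$.

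The workhorse of the argument will be an ``adjoint'' identity for the Bergman projection: for $f\in L^p(U)$ and $g\in L^q(U)$,
\begin{equation}\label{E:plan-adjoint}
\langle \bm{B}^U f,g\rangle = \langle f,\bm{B}^U g\rangle.
\end{equation}
Before proving surjectivity, I would first establish that $(\bm{B}^U)^+$ is bounded on $L^q(U)$ as well. This follows from hypothesis (1) by a standard duality argument: the kernel $|B^U(z,w)|$ is symmetric in $(z,w)$ because $B^U(z,w)=\overline{B^U(w,z)}$, so the operator $(\bm{B}^U)^+$ is self-adjoint under the Hölder pairing, and $L^p$-boundedness transfers to $L^q$-boundedness with the same norm. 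In particular, both $\bm{B}^U f$ and $\bm{B}^U g$ lie in $L^p(U)$ and $L^q(U)$ respectively, and the iterated integral $\iint |B^U(z,w)||f(w)||g(z)|\,dV(w)dV(z)$ is finite, so Fubini's theorem yields \eqref{E:plan-adjoint}.

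With \eqref{E:plan-adjoint} in hand, surjectivity of $T$ is straightforward. Given $\phi\in A^p(U)'$, use Hahn-Banach to extend it to $\widetilde\phi\in L^p(U)'$ with $\|\widetilde\phi\|=\|\phi\|$. By the standard $L^p$-$L^q$ duality there is a unique $g\in L^q(U)$ with $\widetilde\phi(h)=\langle h,g\rangle$ for all $h\in L^p(U)$ and $\|g\|_q=\|\phi\|$. For $f\in A^p(U)$, hypothesis (2) gives $\bm{B}^U f=f$, so
\[
\phi(f)=\widetilde\phi(\bm{B}^U f)=\langle \bm{B}^U f,g\rangle=\langle f,\bm{B}^U g\rangle=T(\bm{B}^U g)(f).
\]
Now $\bm{B}^U g\in L^q(U)$ (by boundedness of $(\bm{B}^U)^+$ on $L^q$) and is holomorphic (by the usual argument that integration against $B^U(z,\cdot)$, which is antiholomorphic in the second slot and holomorphic in $z$, produces a holomorphic function), so $\bm{B}^U g\in A^q(U)$. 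This gives surjectivity and $\|\bm{B}^U g\|_q\le \|(\bm{B}^U)^+\|_{L^q\to L^q}\cdot\|\phi\|$, yielding a bounded inverse.

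For injectivity of $T$, suppose $T(g)=0$, i.e.\ $\langle f,g\rangle=0$ for every $f\in A^p(U)$. For any $h\in L^p(U)$, $\bm{B}^U h\in A^p(U)$, and by \eqref{E:plan-adjoint}, $0=\langle \bm{B}^U h,g\rangle=\langle h,\bm{B}^U g\rangle$. Since hypothesis (2) gives $\bm{B}^U g=g$ for $g\in A^q(U)$, we conclude $\langle h,g\rangle=0$ for all $h\in L^p(U)$, forcing $g=0$ by nondegeneracy of the Hölder pairing. The main obstacle is the careful handling of \eqref{E:plan-adjoint}, since the Bergman projection is only \emph{a priori} defined on $L^2(U)$; the self-adjointness of $(\bm{B}^U)^+$ under the Hölder pairing and the absolute convergence supplied by hypothesis (1) are exactly what allow the $L^2$-adjointness to extend to the $(p,q)$-setting.
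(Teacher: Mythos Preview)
Your argument is correct. Note, however, that the paper does not actually prove this theorem: it is stated as a reformulation of \cite[Theorem~2.15]{ChEdMc19} and left without proof. That said, your approach---establishing the self-adjointness identity $\langle \bm{B}^U f,g\rangle=\langle f,\bm{B}^U g\rangle$ via Fubini (justified by absolute boundedness), then using Hahn--Banach plus this identity for surjectivity and the identity plus hypothesis~(2) for injectivity---exactly parallels the argument the paper gives for the analogous MBP duality result in Proposition~\ref{prop-dual}, with the Bergman projection playing the role of the MBP and the H\"older pairing replacing the twisted pairing~$\{\cdot,\cdot\}_p$.
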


Conditions (1) and (2) both hold, for instance, on smoothly bounded strongly pseudoconvex domains (see \cite{PhoSte77} and \cite{catlin}), thus yielding the dual space identification. 
But when one of the conditions (1) or (2) fails, the conclusion can fail. 

On the punctured disc $\D^* \subset \C$, (1) always holds but (2) fails for all $p\not=2$; it can be shown that under the pairing \eqref{E:L2-pairing}, $A^p(\D^*)'$ can only be identified with $A^q(\D^*)$ if $p=q=2$.
On the Hartogs triangle $\h$, (1) holds if $\frac{4}{3}<p<4$, but (2) never holds for a $p$ in this range, as we saw in Section~\ref{SS:Inability-to-reproduce}. The pairing \eqref{E:L2-pairing} is not a duality pairing on $\h$ for $\frac{4}{3}<p<4$ unless $p=2$.
The mapping $A^q(\h)\to A^p(\h)'$
given by the pairing is not injective if $2<p<4$ and not surjective if 
$\frac{4}{3}<p<2$.

In contrast with the above, the duality theory of Section~\ref{SS:adjoints-and-duality} characterizes duals of Bergman spaces of Reinhardt domains via the pairing \eqref{eq-new-pairing} whenever the MBP is absolutely bounded.
We saw that Theorem~\ref{T:dual-space-on-U} gives a concrete description of the dual space of $A^p(\D^*)$, and for monomial polyhedra Theorem \ref{T:MBP-duality-pairing} does the same. 

\bibliographystyle{alpha}
\bibliography{ChakEdh22}
\end{document}